\newif\ifAndo
\newtheorem{example}[theorem]{Example}
\newtheorem{remark}[theorem]{Remark}
\newcommand\q{\enquote}
\newcommand \D   {\text{D}}
\newcommand  \esssup {\mathop{\text{ess} \sup} }
\newcommand \eps   {\varepsilon}
\newcommand \N   {\mathbb{N}}
\newcommand \R   {\mathbb{R}}
\newcommand \C   {\mathbb{C}}
\newcommand \K   {\mathcal{K}}
\newcommand \Kinf{\mathcal{K_\infty}}
\newcommand \KL  {\mathcal{KL}}
\newcommand \LL  {\mathcal{L}}
\newcommand{\Uc}{\ensuremath{\mathcal{U}}}
\newcommand{\scalp}[2]{ \lel #1, #2 \rir }
\newcommand{\lel}{\left\langle}
\newcommand{\rir}{\right\rangle}
\newcommand \qrq   {\quad\Rightarrow\quad}
\newcommand \srs   {\ \ \Rightarrow\ \ }
\newcommand \Iff   {\Leftrightarrow}
\newcommand \re  {\mathrm{Re}}
\newcommand \im  {\mathrm{Im}}
\newcounter{syscounter}
\newenvironment{sysnum}{\begin{list}{($\Sigma{\arabic{syscounter}}$)}%
{\settowidth{\labelwidth}{($\Sigma4$)}
\settowidth{\leftmargin}{($\Sigma4$)~}%
\usecounter{syscounter}}}
{\end{list}}
\newcommand{\midset}{\;:\;}
\newcommand{\einsnorm}[2]{\ensuremath{
    \!\!\;\!\!\!\;
    \left\bracevert\!\!\!\!\!\left\bracevert
    \!
		\ifthenelse{\isempty{#2}}{#1}{#1(#2)}
    \!
      \right\bracevert\!\!\!\!\!\right\bracevert
    \!\!\;\!\!\!\;
  }}
\title{
Noncoercive Lyapunov functions for input-to-state stability of infinite-dimensional systems
\thanks{
This is a modified version of the SICON article published in 2020 that includes corrections from 
"Corrigendum: Noncoercive Lyapunov functions for input-to-state stability of infinite-dimensional systems", 
which was accepted to SICON journal in 2022. The numeration in this version of the article is the same as in the original article from 2020.\newline 
This work is partially supported by German Research Foundation (DFG), grant: MI 1886/2-1.\newline
A short conference version of this paper was presented at the 57th Conference on Decision and Control as:
B. Jacob, A. Mironchenko, J. R. Partington and F. Wirth. Remarks on input-to-state stability and non-coercive Lyapunov functions. Proc. of the 57th IEEE Conference on Decision and Control (CDC 2018), Miami Beach, USA, pp. 4803–4808, 2018.
}
}
\author{Birgit Jacob \thanks{B. Jacob is with Functional analysis group, School of Mathematics and Natural Sciences,
        University of Wuppertal, D-42119 Wuppertal, Germany (\email{jacob@math.uni-wuppertal.de}).}
\and
Andrii Mironchenko \thanks{Andrii Mironchenko is with Faculty of Computer Science and Mathematics, University of
  Passau, Germany (\email{andrii.mironchenko@uni-passau.de}).  Corresponding author.}
\and
Jonathan R.~Partington \thanks{J. R.~Partington is with School of Mathematics, University of Leeds, Leeds LS2 9JT, Yorkshire, United Kingdom (\email{j.r.partington@leeds.ac.uk}).}
\and
Fabian R.~Wirth \thanks{F. R.~Wirth is with 
Faculty of Computer Science and Mathematics, University of Passau,
94030 Passau, Germany (\email{fabian.(lastname)@uni-passau.de}).}
}
\date{\today}
\begin{document}
\maketitle

\begin{abstract}
We consider an abstract class of infinite-dimensional dynamical systems
with inputs. For this class, the significance of noncoercive Lyapunov
functions is analyzed. It is shown that the existence of such Lyapunov
functions implies norm-to-integral input-to-state stability. This property in turn is equivalent to input-to-state stability, if the system satisfies certain mild regularity assumptions. 
For a particular class of linear systems with unbounded
admissible input operators, explicit constructions
of noncoercive Lyapunov functions are provided. The theory is applied to a heat
equation with Dirichlet boundary conditions. 
\end{abstract}

\begin{keywords}
infinite-dimensional systems, input-to-state stability, Lyapunov functions, nonlinear systems, linear systems.
\end{keywords}

\begin{AMS}
35Q93, 37B25, 37L15, 93C10, 93C25, 93D05, 93D09  
\end{AMS}

\pagestyle{myheadings}
\thispagestyle{plain}
\markboth{Non-coercive Lyapunov functions for ISS of infinite-dimensional systems}
{Non-coercive Lyapunov functions for ISS of infinite-dimensional systems}

\section{Introduction}

The concept of input-to-state stability (ISS), introduced in \cite{Son89} for ordinary differential equations (ODEs), unifies the classical Lyapunov and input-output stability theories and has broad applications in nonlinear control theory, in particular to robust stabilization of nonlinear systems \cite{FrK08}, design of nonlinear observers \cite{ArK01}, analysis of large-scale networks \cite{DRW10, JTP94}, etc.

The influence of finite-dimensional ISS theory and a desire to develop powerful tools for robust control of linear and nonlinear distributed parameter systems resulted in extensions of ISS concepts to broad classes of infinite-dimensional systems, including partial differential equations (PDEs) with distributed and boundary controls, semilinear equations in Banach spaces, time-delay systems, etc. \cite{DaM13, JNP18, JLR08, KaK16b, KaK18, KaK19},  \cite{MiI15b, MiW18b, TPT18, Sch20}.

Currently ISS of infinite-dimensional systems is an active research area at the intersection of nonlinear control, functional analysis, Lyapunov theory and PDE theory, which brings such important techniques for stability analysis as characterizations 
of ISS and ISS-like properties in terms of weaker stability concepts 
\cite{JNP18, MiW18b, Sch18}, constructions of ISS Lyapunov functions for PDEs with distributed and boundary controls 
\cite{MiI15b, PrM12, TPT18, ZhZ18}, efficient methods for study of boundary control systems 
\cite{JNP18, JSZ17, KaK16b, KaK19, ZhZ19}, etc.
For a survey on ISS of infinite-dimensional systems, we refer to \cite{MiP19}.

It is a basic result in input-to-state stability theory that the existence of an ISS Lyapunov function implies ISS.
However, the construction of ISS Lyapunov functions for
infinite-dimensional systems is a challenging task, especially for systems with boundary inputs and/or for nonlinear systems. Already for undisturbed linear systems over Hilbert
spaces, \q{natural} Lyapunov function candidates constructed via solutions
of Lyapunov equations are of the form $V(x):= \lel Px,x \rir_X$, where
$\lel\cdot ,\cdot \rir_X$ is a scalar product in $X$ and $P$ is a
self-adjoint, bounded and positive linear operator, whose spectrum may contain $0$. In this case $V$
is not coercive  and satisfies only the  weaker property that $V(x)>0$ for $x\neq 0$.
Hence the question arises, whether such \q{non-coercive} Lyapunov functions
can be used to conclude that a given system is ISS.
A thorough study of a similar question related to characterizations of uniform global asymptotic stability has recently been performed in \cite{MiW19a, MiW19b}. 

In \cite[Section III.B]{MiW18b}, it was shown for a class of semilinear
equations in Banach spaces with Lipschitz continuous nonlinearities that
the existence of a non-coercive Lyapunov function implies ISS provided the
flow of the system has some continuity properties with respect to states
and inputs at the origin, and the finite-time reachability sets of the
system are bounded. However, this class of systems does not include many
important systems such as linear control systems with admissible inputs
operators, which are crucially important for the study of partial
differential equations with boundary inputs. 

In this paper, we extend the results from \cite[Section III.B]{MiW18b} to a broader class of systems, which includes at least some important classes of boundary control systems. The characterizations of ISS developed in \cite{MiW18b} will play a central role in these developments.

We start by defining a general class of control systems in
Section~\ref{sec:Prelim}. This class covers a wide range of
infinite-dimensional systems. For this class, several stability concepts
are defined which relate to the characterizations of ISS, in particular to
the characterization with the help of noncoercive Lyapunov functions. 
We define also several further ISS-like properties, which we call norm-to-integral ISS and integral-to-integral ISS. Integral-to-integral ISS has been studied in \cite{Son98} and it was shown that integral-to-integral ISS is equivalent to ISS for systems of ordinary differential equations with sufficiently regular right hand side $f$. Further relations of ISS and integral-to-integral ISS for ODE systems have been developed in \cite{GSW99, KeD16} and other works.


Although ISS is no longer equivalent to integral-to-integral ISS for infinite-dimensio\-nal systems, \emph{we prove in Theorem~\ref{thm:ncISS_LF_sufficient_condition_NEW} that ISS is equivalent to norm-to-integral ISS for a broad class of infinite-dimensional systems provided the flow of the system has some continuity properties w.r.t. states and inputs at the origin (CEP property) and the finite-time reachability sets of the system are bounded (BRS property)}. The proof of this criterion is performed in 3 steps. First, we recall that norm-to-integral ISS implies a so-called uniform limit property, as proved in \cite[Section III.B]{MiW18b}.  
Next we show that integral-to-integral ISS implies local stability of a control system provided the flow of the system is continuous w.r.t. state and inputs at the origin. This is done in Proposition~\ref{prop:ncISS_plus_CEP_implies_ULS}.
The third and final step in the proof of Theorem~\ref{thm:ncISS_LF_sufficient_condition_NEW} is the application of the ISS superposition theorem from \cite{MiW18b}.

In Section~\ref{sec:Non-coercive ISS Lyapunov theorem}, we introduce non-coercive ISS Lyapunov functions for abstract control systems, and show that \emph{existence of such a function for a forward-complete system implies norm-to-integral ISS (Proposition~\ref{prop:ncLF_implies_norm-to-integral-ISS}), and it implies ISS provided CEP and BRS properties are satisfied (Theorem~Theorem~\ref{t:ISSLyapunovtheorem}).} Derivations of these results rely on the characterizations of ISS obtained in 
Section~\ref{sec:Relations between ISS and norm-to-integral ISS}. 
In Section~\ref{sec:Remarks on the definition of the ISS Lyapunov function}, we discuss the employed definition of the ISS Lyapunov function for various common choices of input spaces.

\emph{In Section~\ref{sec:Lyapunov_Theorem_systems_unbounded_operators}, we derive
a constructive converse ISS Lyapunov theorem (Theorem~\ref{thm:Gen_ISS_LF_Construction}) for certain classes of linear systems with admissible input operators. In particular, our results can be applied for a broad class of analytic semigroups
over Hilbert spaces generated by subnormal operators, as discussed in Section~\ref{sec1}.}

It is well-known that the classic heat equation with Dirichlet boundary inputs is ISS, which has been verified by means of several different methods: \cite{JNP18, KaK16b, MKK19}. However, no constructions for ISS Lyapunov functions have been proposed. 
In Section~\ref{sec:Applications}, we show that using the constructions developed in 
Theorem~\ref{thm:Gen_ISS_LF_Construction}, one can construct a non-coercive ISS Lyapunov function for this system.
It is still an open question, whether a coercive ISS Lyapunov function for a heat equation with the Dirichlet boundary input  exists (note, that for the system with Neumann boundary input a coercive quadratic ISS Lyapunov function can be constructed, see \cite{ZhZ18}).

\textbf{Notation:} We use the following notation. The nonnegative reals are denoted by $\R_+:=[0,\infty)$. The open ball of
radius $r$ around $0$  in a normed vector space $X$ is denoted by $B_r:=B_{r,X}:=\{x \in X: \|x\|_X <
r\}$. 
Similarly, \mbox{$B_{r,\Uc}:=\{u \in \Uc: \|u\|_\Uc < r\}$}. 
By $\mathop{\overline{\lim}}$ we denote the  limit superior.
For any normed linear space $X$, for any $S \subset X$ we denote
the closure of $S$ by 
$\overline{S}$. 
For a linear operator $A:X\to X$ in a Hilbert space $X$ (bounded or densely defined unbounded), we denote by $A^*$ the adjoint of the operator $A$.

For a function $u:\R_+\to U$, where $U$ is any set, we denote by 
$u|_{[0,t]}$ the restriction of $u$ to the interval $[0,t]$, that is
$u|_{[0,t]}:[0,t]\to U$ and $u|_{[0,t]}(s) = u(s)$ for all $s\in[0,t]$.

Let $U$ be a Banach space, $I$ be a closed subset of $\R$ and $p\in[1,+\infty)$. We define the following spaces (see \cite[Definition A.1.14]{JaZ12} for details):
\begin{eqnarray*}
M(\R_+,U) &:=& \{f: \R_+ \to U: f \text{ is strongly measurable} \},\\
L^p(\R_+,U) &:=& \{f \in M(\R_+,U): \|f\|_{L^p(\R_+,U)}:=\Big(\int_0^\infty\|f(s)\|^p_Uds\Big)^{1/p} < \infty \},\\
L^\infty(\R_+,U) &:=& \{f \in M(\R_+,U): \|f\|_{L^\infty(\R_+,U)}:=\esssup_{s\in \R_+}\|f(s)\|_U < \infty \}.
\end{eqnarray*}
Identifying the functions, which differ on a set of Lebesgue measure zero, the spaces $L^p(\R_+,U)$, $p\in[1,+\infty]$ are Banach spaces.

For the formulation of stability properties, the following classes of comparison functions are useful:
\begin{equation*}
\begin{array}{ll}
{\K} &:= \left\{\gamma:\R_+ \to \R_+ : \gamma\mbox{ is continuous and strictly increasing, }\gamma(0)=0\right\}\\
{\K_{\infty}}&:=\left\{\gamma\in\K:\ \gamma\mbox{ is unbounded}\right\}\\
{\LL}&:=\left\{\gamma:\R_+ \to \R_+:\ \gamma\mbox{ is continuous and strictly decreasing with}
 \lim\limits_{t\rightarrow\infty}\gamma(t)=0 \right\}\\
{\KL} &:= \left\{\beta: \R_+^2 \to \R_+ :\ \beta(\cdot,t)\in{\K},\ \forall t \geq 0,\  \beta(r,\cdot)\in {\LL},\ \forall r >0\right\}
\end{array}
\end{equation*}


\section{Preliminaries}
\label{sec:Prelim}

We begin by  defining (time-invariant) forward complete control systems
evolving on a Banach space $X$.

\begin{definition}
\label{Steurungssystem}
Let  $(X,\|\cdot\|_X)$, $(U,\|\cdot\|_U)$ be Banach spaces and $\Uc \subset \{f:\R_+ \to U\}$ be a normed vector space 
          satisfying the following two axioms:
                    
\emph{Axiom of shift invariance:} For all $u \in \Uc$ and all $\tau\geq0$ we have  $u(\cdot + \tau)\in\Uc$ with \mbox{$\|u\|_\Uc \geq \|u(\cdot + \tau)\|_\Uc$}.

\emph{Axiom of concatenation:} For all $u_1,u_2 \in \Uc$ and for all $t>0$ the concatenation of $u_1$ and $u_2$ at time $t$
\begin{equation}
u(\tau) := 
\begin{cases}
u_1(\tau), & \text{ if } \tau \in [0,t], \\ 
u_2(\tau-t),  & \text{ otherwise},
\end{cases}
\label{eq:Composed_Input}
\end{equation}
belongs to $\Uc$.
Consider a map $\phi:\R_+ \times X \times \Uc \to X$.

The triple $\Sigma=(X,\Uc,\phi)$ is called a \emph{forward complete control system}, if the following properties hold:

\begin{sysnum}
    \item\label{axiom:Identity} \emph{Identity property}: for every $(x,u) \in X \times \Uc$
          it holds that $\phi(0, x,u)=x$.
    \item \emph{Causality}: for every $(t,x,u) \in \R_+ \times X \times
          \Uc$, for every $\tilde{u} \in \Uc$ with $u|_{[0,t]} =
          \tilde{u}|_{[0,t]}$ it holds that $\phi(t,x,u) = \phi(t,x,\tilde{u})$.

    \item \label{axiom:Continuity} \emph{Continuity}: for each $(x,u) \in X
      \times \Uc$ the map $t \mapsto \phi(t,x,u),\ t\in [0,\infty)$ is continuous.
        \item \label{axiom:Cocycle} \emph{Cocycle property}: for all $t,h \geq 0$, for all
                  $x \in X$, $u \in \Uc$ we have
\begin{center}
$\phi(h,\phi(t,x,u),u(t+\cdot))=\phi(t+h,x,u)$.
\end{center}
\end{sysnum}
The space $X$ is called the \emph{state space}, $\Uc$ the \emph{input space} and $\phi$ the \emph{transition map}.
\end{definition}
This class of systems encompasses control systems generated by ordinary
differential equations (ODEs), switched systems, time-delay systems,
evolution partial differential equations (PDEs), abstract differential
equations in Banach spaces and many others. 

%
%

We single out two particular cases which will be of interest.

\begin{example}\emph{(Semilinear systems with Lipschitz nonlinearities)}.
Let $A$ be the generator of a strongly continuous semigroup (also called $C_0$-semigroup) $(T(t))_{t\ge 0}$ of bounded linear operators on $X$ and let $f:X\times U \to X$. Consider the system
\begin{equation}
\label{InfiniteDim}
\dot{x}(t)=Ax(t)+f(x(t),u(t)), \quad u(t) \in U,
\end{equation}
where $x(0)\in X$.
We study mild solutions of \eqref{InfiniteDim}, i.e. solutions $x:[0,\tau] \to X$ of the integral equation
\begin{align}
\label{InfiniteDim_Integral_Form}
x(t)=T(t)x(0) + \int_0^t T(t-s)f(x(s),u(s))ds,\quad t \in[0,\tau],
\end{align}
belonging to the space of continuous functions $C([0,\tau],X)$ for some $\tau>0$.

%
%

For system \eqref{InfiniteDim}, we use the following assumption concerning the nonlinearity $f$:
\begin{enumerate}[(i)]  
    \item  $f:X \times U \to X$ is Lipschitz continuous on bounded
subsets of $X$, i.e.  for
all $C>0$, there exists a $L_f(C)>0$, such that for all $ x,y \in B_C $ and for all $v \in B_{C,U}$, it holds that
\begin{eqnarray}
\|f(x,v)-f(y,v)\|_X \leq L_f(C) \|x-y\|_X.
\label{eq:Lipschitz}
\end{eqnarray}
    \item $f(x,\cdot)$ is continuous for all $x \in X$ and $f(0,0)=0$.
\end{enumerate}

%

Let $\Uc:=PC_b(\R_+,U)$ be the space of piecewise continuous functions, which are bounded and right-continuous, endowed with the supremum norm: $\|u\|_\Uc:=\sup_{t\geq 0}\|u(t)\|_U$. Then  our assumptions on $f$ ensure that mild
solutions of initial value problems of the form \eqref{InfiniteDim} exist and are
unique, according to
\cite[Proposition 4.3.3]{CaH98}. For system \eqref{InfiniteDim},
forward completeness is a further assumption. 
If these mild solutions exist on $[0,\infty)$ for every $x(0)\in X$ and $u\in PC_b(\R_+,U)$, then
$(X,PC_b(\R_+,U),\phi)$, defines a forward complete control system, where $\phi(t,x(0),u)$ denotes
the mild solution at time $t$.
\end{example}

\begin{example} \emph{(Linear systems with admissible control operators)}.
\label{exam2}
Consider linear systems of the form
\begin{equation}
\label{InfiniteDim2}
\dot{x}(t)=Ax(t)+ Bu(t), \quad x(0)\in X,\ t\ge 0,
\end{equation}
where  $A$ is the generator of a $C_0$-semigroup $(T(t))_{t\ge 0}$ on a Banach space $X$  and $B\in L(U,X_{-1})$ for some Banach space $U$.
Here $X_{-1}$ is the completion of $X$ with respect to the norm 
$ \|x\|_{X_{-1}}= \|(\beta I -A)^{-1}x\|_X$ 
for some $\beta\in \mathbb C $ in the resolvent set $\rho(A)$ of $A$. The semigroup  $(T(t))_{t\ge 0}$ extends uniquely to a $C_{0}$-semigroup  $(T_{-1}(t))_{t\ge 0}$ on $X_{-1}$ whose generator $A_{-1}$ is an extension of $A$, see e.g.\ \cite{EnN00}.
Thus, we may consider equation \eqref{InfiniteDim2} on the Banach space $X_{-1}$.  
 For every $x_0 \in X$ and every $u\in L^1_{\rm loc}([0,\infty),U)$, the function $x:[0,\infty)\rightarrow X_{-1}$,
\[ x(t):=T(t)x_0+\int_0^t T_{-1}(t-s)B u(s)ds,\quad t\ge 0,\]
is called a \emph{mild solution} of the system \eqref{InfiniteDim2}.

An operator $B\in L(U,X_{-1})$ is called a \emph{$q$-admissible control operator} for $(T(t))_{t\ge 0}$, where $1\le q\le \infty$, if
\[ \int_0^t T_{-1}(t-s)Bu(s)ds\in X\]
for every $t\ge 0$ and $u\in L^q([0,\infty),U)$ \cite{Wei89b}. If the operator $B\in L(U,X_{-1})$ is an $q$-admissible control operator for $(T(t))_{t\ge 0}$, then there exists for any $t\ge 0$ a constant $\kappa(t)>0$ such that 
\begin{eqnarray}
\left\|\int_0^t T_{-1}(t-s)Bu(s)\, ds\right\|_X \le \kappa(t) \|u\|_{q}, \quad u\in L^q([0,t),U),
\label{eq:Bounds-on-convolution}
\end{eqnarray}
see \cite{Wei89b}.

If $B$ is $\infty$-admissible and for every initial condition $x_0\in X$ and every input function $u\in L^\infty([0,\infty),U)$ the mild solution $x:[0,\infty)\rightarrow X$ is continuous, then 
%
$(X, L^\infty([0,\infty),U), \phi)$, where
\begin{eqnarray}
\phi (t, x_0,u):=T(t)x_0+\int_0^t T_{-1}(t-s)B u(s)ds,\quad t\geq 0, \ x_0\in X,\ u\in\Uc,
\label{eq:Linear-sys-solution-map}
\end{eqnarray}
defines a forward-complete control system as defined in Definition~\ref{Steurungssystem}.

\begin{remark}
\label{rem:Infty-admissibility} 
We note that,  $\infty$-admissibility of $B$ and  continuity of all mild solutions  $\phi(\cdot,x_0,u):[0,\infty)\rightarrow X$, with $x_0\in X$  and $u\in L^\infty([0,\infty),U)$ is implied by each of the following conditions:

\begin{itemize}
\item $B$ is $q$-admissible for some $q\in[1,\infty)$, see \cite[Proposition 2.3]{Wei89b},

\item  $B\in L(U,X_{-1})$, $\dim (U)<\infty$, $X$ is a Hilbert
  space and $A-\lambda I$ generates for a certain $\lambda\in\R$ an analytic semigroup that is similar  to a
  contraction semigroup, see \cite[Theorem 1]{JSZ17}.
\end{itemize}
\end{remark}
\end{example}


%
In this article, various stability concepts are needed for forward complete control systems.

\begin{definition}
\label{Assumption2}
Consider a forward complete control system $\Sigma=(X,\Uc,\phi)$.
\begin{enumerate}
\item
We call $0 \in X$ an \emph{equilibrium point (of the undisturbed system)}  if
$\phi(t,0,0) = 0$ for all $t \geq 0$.
\item
We say that $\Sigma$ is \emph{continuous at the equilibrium point (CEP)}, if $0$ is an equilibrium and    
for every $\eps >0$ and for any $h>0$ there exists a $\delta =
          \delta (\eps,h)>0$, so that 
\begin{eqnarray}
 t\in [0,h]\ \wedge\ \|x\|_X \leq \delta \ \wedge\  \|u\|_{\Uc} \leq \delta \qrq  \|\phi(t,x,u)\|_X \leq \eps.
\label{eq:RobEqPoint}
\end{eqnarray}
\item
We say that $\Sigma$ has \emph{bounded reachability sets (BRS)}, if for any $C>0$ and any $\tau>0$ it holds that 
\[
\sup\big\{
\|\phi(t,x,u)\|_X\! \midset \!\|x\|_X\leq C,\ \|u\|_{\Uc}\! \leq C,\ t \in [0,\tau]\big\} < \infty.
\]
\item
System $\Sigma$ is called
   \emph{uniformly locally stable (ULS)}, if there exist $ \sigma \in\Kinf$, $\gamma
          \in \Kinf$ and $r>0$ such that for all $ x \in \overline{B_r}$ and all $ u
          \in \overline{B_{r,\Uc}}$:
\begin{equation}
\label{GSAbschaetzung}
\left\| \phi(t,x,u) \right\|_X \leq \sigma(\|x\|_X) + \gamma(\|u\|_{\Uc}) \quad \forall t \geq 0.
\end{equation}
\item
We say that $\Sigma$ has the
  \emph{uniform limit property (ULIM)}, if there exists
    $\gamma\in\K$ so that for every $\eps>0$ and for every $r>0$ there
    exists a $\tau = \tau(\eps,r)$ such that 
for all $x$ with $\|x\|_X \leq r$ and all $u\in\Uc$ there is a $t\leq
\tau$ such that 
\begin{eqnarray}
\|\phi(t,x,u)\|_X \leq \eps + \gamma(\|u\|_{\Uc}).
\label{eq:ULIM_ISS_section}
\end{eqnarray}




%
\item \label{Def:ISS}
System $\Sigma$ is called \emph{(uniformly)  input-to-state stable (ISS)}, if there exist $\beta \in \KL$ and $\gamma \in \K$ 
such that for all $ x \in X$, $ u\in \Uc$ and $ t\geq 0$ it holds that
\begin {equation}
\label{iss_sum}
\| \phi(t,x,u) \|_{X} \leq \beta(\| x \|_{X},t) + \gamma( \|u\|_{\Uc}).
\end{equation}

\item \label{def:norm-i_ISS} 
We call $\Sigma$ \emph{norm-to-integral ISS} if there are $\alpha, \psi, \sigma\in\Kinf$ so that
for all $x\in X$, $u\in\Uc$ and $t\geq 0$ it holds that
\begin{equation}
\label{eq:ni-ISS}
\int_0^t \!\alpha(\|\phi(s,x,u)\|_X) ds \leq \psi(\|x\|_X) + t \sigma(\|u\|_{\Uc}).
\end{equation}
\end{enumerate}
\end{definition}

For the special case of $L^\infty$ input spaces we introduce one more stability notion 
\begin{definition}
\label{def:i-i_ISS} 
Consider a forward complete control system $\Sigma:=(X,\Uc,\phi)$ with the input space $\Uc:=L^\infty(\R_+,U)$, where $U$ is any normed linear space.

We call $\Sigma$ \emph{integral-to-integral ISS} if there are $\alpha, \psi, \sigma\in\Kinf$ so that
for all $x\in X$, $u\in\Uc$ and $t\geq 0$, it holds that
\begin{equation}
\label{eq:ii-ISS}
\int_0^t \!\alpha(\|\phi(s,x,u)\|_X) ds \leq \psi(\|x\|_X) +\! \int_0^t\! \sigma(\|u(s)\|_{U})ds.
\end{equation}
\end{definition}


\begin{remark}
\label{rem:RFC-REP} 
The CEP and BRS properties are motivated by the notions of a robust equilibrium point and of robust forward completeness, which were widely employed in \cite{KaJ11b}, see also \cite{MiW19a} where these concepts were used in the context of non-coercive Lyapunov theory.
\end{remark}


%

\begin{example} \emph{(Linear systems with admissible control operators)}
\label{exam2b}
We continue with Example \ref{exam2}, that is, we consider again equation \eqref{InfiniteDim2} and assume that
 $A$ generates a $C_0$-semigroup, $B\in L(U,X_{-1})$ is $\infty$-admissible and for every initial condition $x_0\in X$ and every input function $u\in L^\infty([0,\infty),U)$ the mild solution $x:[0,\infty)\rightarrow X$ is continuous.  These assumptions guarantee that $(X, L^\infty([0,\infty),U), \phi)$, where $\phi$ is given by \eqref{eq:Linear-sys-solution-map},  
defines a forward-complete control system. The system has the following properties
\begin{enumerate}
\item $0 \in X$ is an equilibrium point due to the linearity of the system,
\item $(X, L^\infty([0,\infty),U), \phi)$ has the CEP property,  and  bounded reachability sets (BRS), which follows easily from inequality \eqref{eq:Bounds-on-convolution} and linearity of the system.
\item $(T(t))_{t \ge 0}$ is exponentially stable if and only if  $(X, L^\infty([0,\infty),U), \phi)$ is ISS  \cite[Proposition 2.10]{JNP18}.
\item If $(T(t))_{t \ge 0}$ is exponentially stable, then $(X, L^\infty([0,\infty),U), \phi)$ has  the uniform limit property (ULIM), and is uniformly locally stable (ULS), which follows from the previous item.
\item \label{ii_ISS_and_ISS} 
If $\Sigma:=(X, L^\infty([0,\infty),U), \phi)$ has a so-called integral ISS property, 
then $\Sigma$ is ISS \cite{JNP18}.
To the best of the knowledge of the authors, it is unknown, whether or not
the converse implication holds for every linear system \eqref{InfiniteDim2}.
\end{enumerate}
\end{example}


\section{Characterization of ISS in terms of norm-to-integral ISS}
\label{sec:Relations between ISS and norm-to-integral ISS}

In this section, we characterize input-to-state stability in terms of the norm-to-integral ISS, which is interesting on its own right, but also it will be instrumental for the establishment of non-coercive ISS Lyapunov theorems in the next section.
We start with
\begin{proposition}
\label{prop:ISS-implies-norm-i-ISS} 
If a forward complete control system is ISS, then it is norm-to-integral ISS.
\end{proposition}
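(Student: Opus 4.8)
The plan is to turn the pointwise ISS estimate into the integral bound defining norm-to-integral ISS, treating the state-dependent and input-dependent contributions separately.

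First I would split the estimate. By ISS there are $\beta\in\KL$, $\gamma\in\K$ with $\|\phi(s,x,u)\|_X\le\beta(\|x\|_X,s)+\gamma(\|u\|_\Uc)$ for all $s\ge0$. For any candidate $\alpha\in\K$, the weak triangle inequality $\alpha(a+b)\le\alpha(2a)+\alpha(2b)$ (which holds since $\alpha$ is increasing and $a+b\le2\max\{a,b\}$) gives
\begin{equation*}
\alpha(\|\phi(s,x,u)\|_X)\le\alpha\big(2\beta(\|x\|_X,s)\big)+\alpha\big(2\gamma(\|u\|_\Uc)\big),
\end{equation*}
and integrating over $[0,t]$ yields $\int_0^t\alpha(\|\phi(s,x,u)\|_X)\,ds\le\int_0^t\alpha(2\beta(\|x\|_X,s))\,ds+t\,\alpha(2\gamma(\|u\|_\Uc))$. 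The input term already has the right shape: I would let $\sigma$ be any $\Kinf$-function dominating $\alpha(2\gamma(\cdot))$, for instance $\sigma(r):=\alpha(2\gamma(r))+r$, so that the second summand is bounded by $t\,\sigma(\|u\|_\Uc)$.

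The crux is to choose $\alpha$ so that the state term integrates to a finite, $t$-independent, $\Kinf$-bound in $\|x\|_X$; this is nontrivial because $\beta(r,\cdot)$ may decay arbitrarily slowly, so for a careless $\alpha$ the integral $\int_0^\infty\alpha(2\beta(r,s))\,ds$ can diverge. To handle this I would invoke Sontag's $\KL$-lemma to obtain $\theta_1,\theta_2\in\Kinf$ with $\beta(r,s)\le\theta_1(\theta_2(r)e^{-s})$ for all $r,s\ge0$, and then set $\alpha(y):=\theta_1^{-1}(y/2)\in\Kinf\subset\K$. By monotonicity this gives $\alpha(2\beta(r,s))\le\alpha(2\theta_1(\theta_2(r)e^{-s}))=\theta_2(r)e^{-s}$, whence
\begin{equation*}
\int_0^t\alpha(2\beta(\|x\|_X,s))\,ds\le\int_0^\infty\theta_2(\|x\|_X)e^{-s}\,ds=\theta_2(\|x\|_X),
\end{equation*}
so I can take $\psi:=\theta_2\in\Kinf$.

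Combining the two estimates produces the norm-to-integral ISS inequality with this $\alpha$, with $\psi=\theta_2$, and with the dominating $\sigma$. The main obstacle is exactly the middle step: matching the growth of $\alpha$ to the decay of $\beta$ so that the state part of the bound integrates to something finite and independent of $t$. Once the $\KL$-lemma supplies the exponential majorant of $\beta$, the choice $\alpha=\theta_1^{-1}(\cdot/2)$ makes the integrand exponentially decaying and the rest is routine.
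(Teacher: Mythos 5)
Your proof is correct and follows essentially the same route as the paper: Sontag's $\KL$-lemma to get $\beta(r,s)\le\theta_1(\theta_2(r)e^{-s})$, the weak triangle inequality, and the choice $\alpha=\theta_1^{-1}(\cdot/2)$ (which is exactly the paper's $\bar\xi=\xi_1(\cdot/2)$ under the identification $\theta_1=\xi_1^{-1}$, $\theta_2=\xi_2$), followed by integration. The only cosmetic difference is that you introduce a generic $\alpha$ before fixing it, and you add $+r$ to $\sigma$ to guarantee it lies in $\Kinf$, a detail the paper handles by simply taking $\gamma\in\Kinf$.
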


\begin{proof}
Let $\Sigma=(X,\Uc,\phi)$ be a forward complete ISS control system and let $\beta\in\KL$ be as in Definition~\ref{Assumption2}.
By Sontag's $\KL$-lemma \cite[Proposition 7]{Son98}, there are $\xi_1, \xi_2 \in\Kinf$ so that $\beta(r,t) \leq \xi_1^{-1}(e^{-t}\xi_2(r))$ for all $r,t\in\R_+$.
ISS of $\Sigma$ now implies that there is $\gamma\in\Kinf$ such that the following holds:
\begin {equation*}
\| \phi(t,x,u) \|_{X} \leq \xi_1^{-1}(e^{-t}\xi_2(\| x \|_{X})) + \gamma( \|u\|_{\Uc}),\quad t\geq 0, \ x\in X, \ u\in\Uc.
\end{equation*}
Define $\bar{\xi}(r):=\xi_1(\frac{1}{2}r)$, $r\in\R_+$. Using the inequality $\bar{\xi}(a+b)\leq \bar{\xi}(2a) + \bar{\xi}(2b)$, which is valid for all $a,b\in\R_+$, we obtain that for all $x\in X$, $u\in\Uc$ and $t\geq 0$ it holds that
\begin {equation}
\label{eq:ISS-implies-iiISS-auxiliary}
\bar{\xi}(\| \phi(t,x,u) \|_{X}) \leq e^{-t}\xi_2(\| x \|_{X}) + \xi_1(\gamma( \|u\|_{\Uc})).
\end{equation}
Integrating \eqref{eq:ISS-implies-iiISS-auxiliary}, we see that 
\begin {equation*}
\int_0^t\bar{\xi}(\| \phi(s,x,u) \|_{X}) ds \leq \xi_2(\| x \|_{X}) + t\xi_1 \circ \gamma( \|u\|_{\Uc}),\quad t\geq 0, \ x\in X, \ u\in\Uc,
\end{equation*}
which shows norm-to-integral ISS of $\Sigma$.
\end{proof}

Next we show that norm-to-integral ISS implies ISS for a class of forward-complete control systems satisfying the  CEP and BRS properties.
In order to prove this, we are going to use the following characterization of ISS, shown in \cite{MiW18b}:
\begin{theorem}
\label{thm:UAG_equals_ULIM_plus_LS}
Let $\Sigma=(X,\Uc,\phi)$ be a forward complete control system. Then $\Sigma$ is ISS if and only if $\Sigma$ is ULIM, ULS, and BRS.
\end{theorem}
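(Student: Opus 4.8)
The plan is to prove the equivalence in two halves, with the forward direction being routine and the converse containing all the difficulty. For necessity (ISS $\Rightarrow$ ULIM, ULS, BRS), I would read everything off the single estimate $\|\phi(t,x,u)\|_X \le \beta(\|x\|_X,t) + \gamma(\|u\|_\Uc)$. Since $\beta(\cdot,t)$ is increasing and $\beta(r,\cdot)$ is decreasing, on any set $\{\|x\|_X\le C,\ \|u\|_\Uc\le C,\ t\in[0,\tau]\}$ one has $\|\phi(t,x,u)\|_X \le \beta(C,0)+\gamma(C)<\infty$, which is BRS. Setting $t=0$ and majorizing $\beta(\cdot,0)\in\K$ by a $\Kinf$ function gives ULS (indeed a global bound) with $\sigma:=\beta(\cdot,0)$ suitably enlarged and the same gain $\gamma$. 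Finally, for ULIM fix $\eps,r>0$ and pick $\tau=\tau(\eps,r)$ with $\beta(r,\tau)\le\eps$; evaluating at $t=\tau$ yields $\|\phi(\tau,x,u)\|_X\le \eps+\gamma(\|u\|_\Uc)$ for all $\|x\|_X\le r$ and all $u\in\Uc$, which is exactly ULIM. I expect all three to be immediate.

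For sufficiency (ULIM + ULS + BRS $\Rightarrow$ ISS) I would work in two steps. Step 1 is to upgrade the uniform limit property to a \emph{uniform asymptotic gain}: for each $\eps,r>0$ a time $\tau$ such that $\|\phi(t,x,u)\|_X\le \eps+\hat\gamma(\|u\|_\Uc)$ holds for \emph{all} $t\ge\tau$, not merely at one instant. The mechanism is to combine the single ``good time'' $t_1\le\tau(\eps,r)$ supplied by ULIM with ULS through the cocycle property \ref{axiom:Cocycle}: once $\|\phi(t_1,x,u)\|_X$ and the shifted input tail $u(t_1+\cdot)$ are both small (the latter controlled by the shift-invariance axiom of $\Uc$), ULS applied to the sub-trajectory started at $\phi(t_1,x,u)$ keeps the state in a comparably small ball for all later times, while BRS controls the transient on $[0,\tau]$. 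Iterating this ``enter a smaller ball, then stay'' scheme over a sequence of shrinking levels yields the asymptotic gain with $\tau$ depending only on $\eps$ and $r$.

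Step 2 is to assemble a $\KL$ estimate from the uniform asymptotic gain together with ULS and BRS, by the standard comparison-function construction (as in the Sontag--Wang characterization). ULS controls the trajectory for small states, BRS bounds it uniformly on the finite interval $[0,\tau(\eps,r)]$, and the asymptotic gain controls it for $t\ge\tau(\eps,r)$; interpolating the family of times $\tau(\eps,r)$ monotonically in both $\eps$ and $r$ produces a single $\beta\in\KL$ dominating the state-dependent part, the input being absorbed into an enlarged gain $\gamma\in\K$. Summing the two contributions gives the ISS estimate \eqref{iss_sum}.

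The main obstacle is Step 1: the passage from ``there exists one time satisfying the limit bound'' (ULIM, which is \emph{global} in the input) to ``the bound holds for all subsequent times'' (the asymptotic gain), using a property ULS that is only \emph{local} in both state and input. One must therefore separate the regimes of small and large $\|u\|_\Uc$, lean on BRS to bridge the finite initial interval, and use the cocycle and shift-invariance axioms to restart the estimate from the small state reached at time $t_1$. Keeping the resulting times uniform in $x$ and $u$, so that $\tau$ depends on $\eps$ and $r$ alone, is where the argument is most delicate and is the step I would spend the most effort on.
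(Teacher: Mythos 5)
A preliminary remark: the paper does not prove Theorem~\ref{thm:UAG_equals_ULIM_plus_LS} itself --- it imports it from \cite{MiW18b} --- so your attempt has to be compared with the proof given there. Your necessity direction is correct, and your two-step plan for sufficiency --- first upgrade ULIM to a uniform asymptotic gain (UAG) property via ULS, the cocycle property and shift invariance, then assemble the $\KL$ estimate by the Sontag--Wang-type construction --- is indeed the architecture of the proof in \cite{MiW18b}. Your Step 2 is standard once Step 1 is available.

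The genuine gap is in Step 1, precisely at the place you flag as delicate; flagging it is not the same as closing it. Your only mechanism for making the trajectory \emph{stay} small after the ULIM time $t_1$ is ULS, and ULS is applicable only if both $\|\phi(t_1,x,u)\|_X$ and $\|u(t_1+\cdot)\|_\Uc$ lie below the ULS radius. UAG, however, must hold for \emph{all} inputs. When $\|u\|_\Uc$ exceeds the ULS radius, your ``enter a smaller ball, then stay'' iteration never gets started: there is no ``stay'' step, and BRS --- which you invoke only to ``bridge the finite initial interval'' --- bounds trajectories on compact time intervals only. Two ideas are missing. First, the threshold separating small from large inputs must be fixed independently of $\eps$ (e.g.\ run ULIM with a fixed tolerance equal to half the ULS radius, rather than with tolerance $\eps$); otherwise the error terms accumulated in the large-input regime can only be dominated by a ``gain'' satisfying $\hat\gamma(0^+)>0$, which is not a $\K$-function. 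Second, in the large-input regime one must apply ULIM \emph{recurrently}: the trajectory re-enters a ball of radius (fixed tolerance) $+\,\gamma(\|u\|_\Uc)$ with gaps bounded by the ULIM time associated with that ball, and BRS is then used to bound the excursions between consecutive re-entries --- that is, for all $t\geq t_1$, not just on an initial interval. Because the tolerance was fixed, the resulting bound $\sup_{t\geq t_1}\|\phi(t,x,u)\|_X\leq\Theta(\|u\|_\Uc)$ depends on the input norm alone, so it can be majorized by a $\Kinf$ function away from zero and merged with the small-input gain into a single $\K$-gain. (Equivalently: first prove uniform global stability, $\|\phi(t,x,u)\|_X\leq\sigma(\|x\|_X)+\gamma(\|u\|_\Uc)$ for all $t\geq 0$, from ULIM, ULS and BRS together; UAG then follows from one further application of ULIM and the cocycle property.) Without these two ingredients your Step 1 --- and with it the hard direction of the theorem --- does not go through; with them, the rest of your outline is sound.
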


In \cite[Proposition 8]{MiW18b} it was shown (with slightly different formulation, but the same proof) that
\begin{proposition}
\label{prop:ncISS_implies_ULIM} 
Let $\Sigma=(X,\Uc,\phi)$ be a forward complete control system. If $\Sigma$ is  norm-to-integral ISS, then $\Sigma$ is ULIM.
\end{proposition}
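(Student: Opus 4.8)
The plan is to upgrade the time-averaged smallness furnished by the norm-to-integral estimate to the pointwise-in-time smallness demanded by ULIM, via a short contradiction argument. Let $\alpha\in\K$ and $\psi,\sigma\in\Kinf$ be the functions from the definition of norm-to-integral ISS. First I would fix the gain once and for all, independently of the later data, by setting $\gamma:=\alpha^{-1}\circ(2\sigma)$; since $\sigma\in\Kinf$ is unbounded this uses that $\alpha$ is invertible on all of $\R_+$, so that $\gamma\in\Kinf\subset\K$, as the definition of ULIM requires.

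Next I would fix $\eps>0$ and $r>0$ and put $\tau:=\tau(\eps,r):=2\psi(r)/\alpha(\eps)+1$, and argue by contradiction: suppose there are $x$ with $\|x\|_X\le r$ and $u\in\Uc$, with $v:=\|u\|_\Uc$, such that $\|\phi(t,x,u)\|_X>\eps+\gamma(v)$ for every $t\in[0,\tau]$. Since $\alpha$ is continuous and strictly increasing and $t\mapsto\phi(t,x,u)$ is continuous, the integrand obeys $\alpha(\|\phi(t,x,u)\|_X)>\alpha(\eps+\gamma(v))$ throughout $[0,\tau]$, whence $\int_0^\tau\alpha(\|\phi(s,x,u)\|_X)\,ds\ge\tau\,\alpha(\eps+\gamma(v))$.

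The heart of the matter is a uniform gap between $\alpha(\eps+\gamma(v))$ and $\sigma(v)$. By the choice of $\gamma$ one has $\alpha(\gamma(v))=2\sigma(v)$, so monotonicity gives $\sigma(v)=\tfrac12\alpha(\gamma(v))\le\tfrac12\alpha(\eps+\gamma(v))$ and hence $\alpha(\eps+\gamma(v))-\sigma(v)\ge\tfrac12\alpha(\eps+\gamma(v))\ge\tfrac12\alpha(\eps)>0$. Feeding the lower bound on the integral into the norm-to-integral inequality $\int_0^\tau\alpha(\|\phi(s,x,u)\|_X)\,ds\le\psi(\|x\|_X)+\tau\sigma(v)\le\psi(r)+\tau\sigma(v)$ yields $\tau\big(\alpha(\eps+\gamma(v))-\sigma(v)\big)\le\psi(r)$, so $\tfrac12\tau\,\alpha(\eps)\le\psi(r)$, i.e.\ $\tau\le2\psi(r)/\alpha(\eps)$, contradicting the choice of $\tau$. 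Thus no such pair exists, and ULIM holds with this $\gamma$ and $\tau(\eps,r)$.

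The only genuinely delicate point, and the step I expect to be the main obstacle, is the definition $\gamma=\alpha^{-1}\circ(2\sigma)$: transferring integral decay to a pointwise estimate forces one to invert $\alpha$ against the unbounded comparison function $\sigma$, which requires $\alpha$ to be unbounded (a $\Kinf$ function). Once $\gamma$ is in hand everything reduces to monotonicity and the defining inequality; the only bookkeeping is that $\gamma$ must be selected before $\eps$ and $r$ are named, while $\tau$ is permitted to depend on both, which the argument respects.
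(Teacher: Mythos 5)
Your argument is, in substance, the paper's own proof: the paper does not spell one out but invokes \cite[Proposition 8]{MiW18b} (``with slightly different formulation, but the same proof''), and that proof is exactly your contradiction scheme --- fix $\gamma:=\alpha^{-1}\circ(2\sigma)$ once and for all, set $\tau:=2\psi(r)/\alpha(\eps)+1$, bound the integrand below via $\alpha(\eps+\gamma(\|u\|_\Uc))\ge\tfrac12\alpha(\eps)+\sigma(\|u\|_\Uc)$, and play the resulting linear-in-$\tau$ lower bound against the norm-to-integral estimate. Every step is sound \emph{provided} $\alpha\in\Kinf$.

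The point you flag as the main obstacle is, however, a genuine one, and it cannot be circumvented by a cleverer gain: with Definition~\ref{def:norm-i_ISS} read literally ($\alpha\in\K$ only), the proposition is false. Concretely, take $X=\R$, $\Uc=L^\infty(\R_+,\R)$ and $\dot x=\bigl(-1+\max\{u-1,0\}\bigr)x$. If $\|u\|_\Uc<1$ then $\phi(t,x_0,u)=e^{-t}x_0$, while for arbitrary $u$ the function $\alpha(s):=s/(1+s)$ is bounded by $1$; hence for all $x_0$, $u$, $t$,
\begin{equation*}
\int_0^t \alpha\bigl(|\phi(s,x_0,u)|\bigr)\,ds \;\le\; |x_0|+\ln(1+|x_0|)+t\,\|u\|_\Uc ,
\end{equation*}
(for $\|u\|_\Uc\ge 1$ the left side is at most $t\le t\|u\|_\Uc$; for $\|u\|_\Uc<1$ it is at most $\int_0^\infty\min\{e^{-s}|x_0|,1\}\,ds$), so the system is norm-to-integral ISS with the bounded rate $\alpha\in\K$, $\psi(s)=s+\ln(1+s)$, $\sigma=\operatorname{id}$. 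Yet under $u\equiv 2$ every state is an equilibrium, so for any fixed $\gamma\in\K$, any $\eps>0$ and any $|x_0|>\eps+\gamma(2)$ no time $t$ achieves $|\phi(t,x_0,u)|\le\eps+\gamma(2)$: ULIM fails. Thus the statement genuinely requires $\alpha\in\Kinf$ (since $\sigma\in\Kinf$ is unbounded, nothing less makes $\alpha^{-1}\circ(2\sigma)$ well defined), and the same caveat propagates to the ``if'' direction of Theorem~\ref{thm:ncISS_LF_sufficient_condition_NEW}. This mismatch is harmless for the paper's main chain of results, because wherever the proposition is applied the rate is in fact $\Kinf$: Proposition~\ref{prop:ISS-implies-norm-i-ISS} produces $\bar\xi\in\Kinf$, and Proposition~\ref{prop:ncLF_implies_norm-to-integral-ISS} inherits $\alpha\in\Kinf$ from Definition~\ref{def:noncoercive_ISS_LF}. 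In short: your proof is the intended one, and your reservation is not a removable deficiency of your argument but a real $\K$-versus-$\Kinf$ gap in the statement as formulated, which your hypothesis $\alpha\in\Kinf$ implicitly repairs.
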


Next we provide a sufficient condition for the  ULS property. 
\begin{proposition}
\label{prop:ncISS_plus_CEP_implies_ULS} 
Let $\Sigma=(X,\Uc,\phi)$ be a forward complete control system satisfying the CEP property. If $\Sigma$ is norm-to-integral ISS, then $\Sigma$ is ULS.
\end{proposition}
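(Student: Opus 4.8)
The plan is to reduce the assertion to the familiar $\eps$--$\delta$ formulation of local stability and then run a first--crossing argument fuelled by the integral estimate \eqref{eq:ni-ISS}. Recall that $\Sigma$ is ULS once one shows: for every $\eps>0$ there is $\delta>0$ such that $\|x\|_X\le\delta$ and $\|u\|_\Uc\le\delta$ force $\|\phi(t,x,u)\|_X\le\eps$ for all $t\ge0$; a standard comparison--function construction then yields $\sigma,\gamma\in\Kinf$ and $r>0$ realizing \eqref{GSAbschaetzung}. To have the integral estimate available along the trajectory, I would localize it by the cocycle property: applying \eqref{eq:ni-ISS} to the state $\phi(t_0,x,u)$ and the shifted input $u(t_0+\cdot)$, whose $\Uc$--norm is $\le\|u\|_\Uc$ by shift invariance, gives, with $M(s):=\|\phi(s,x,u)\|_X$,
\[
\int_{t_0}^{t_0+h}\alpha(M(s))\,ds\le\psi(M(t_0))+h\,\sigma(\|u\|_\Uc),\qquad t_0,h\ge0.
\]

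Now fix $\eps>0$, pick a level $\theta\in(0,\eps]$ to be chosen last, and set $\delta:=\min\{\theta,\sigma^{-1}(\tfrac12\alpha(\theta))\}$, requiring $\|x\|_X\le\delta$ and $\|u\|_\Uc\le\delta$. Suppose, for contradiction, that $M$ reaches $\eps$; by the continuity axiom there is a first time $t^\ast$ with $M(t^\ast)=\eps$ and $M<\eps$ on $[0,t^\ast)$, and since $M(0)=\|x\|_X\le\delta\le\theta$ the quantity $s^\ast:=\sup\{s\le t^\ast:M(s)\le\theta\}$ satisfies $M(s^\ast)=\theta$ and $M>\theta$ on $(s^\ast,t^\ast)$. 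Applying the displayed windowed estimate on $[s^\ast,t^\ast]$ and bounding $\int_{s^\ast}^{t^\ast}\alpha(M)>(t^\ast-s^\ast)\alpha(\theta)$ yields $(t^\ast-s^\ast)\big(\alpha(\theta)-\sigma(\|u\|_\Uc)\big)<\psi(\theta)$; the choice of $\delta$ guarantees $\sigma(\|u\|_\Uc)\le\tfrac12\alpha(\theta)$, hence $t^\ast-s^\ast<2\psi(\theta)/\alpha(\theta)=:L(\theta)$. Thus the whole rise from the $\theta$--small anchor $\phi(s^\ast,x,u)$ to height $\eps$ happens on a window of length at most $L(\theta)$, and applying CEP with window $L(\theta)$ to this anchor and the shifted input (both of norm $\le\theta$) bounds $\eps=M(t^\ast)$ by the CEP modulus $B(\theta):=\inf\{\eps'>0:\delta(\eps',L(\theta))\ge\theta\}$.

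This is where the decisive step, and the main obstacle, sits: one must choose $\theta$ so small that $B(\theta)<\eps$, which contradicts $\eps\le B(\theta)$ and closes the argument. Because $\alpha\in\K$ need not dominate $\psi\in\Kinf$ near the origin, $L(\theta)=2\psi(\theta)/\alpha(\theta)$ may tend to $+\infty$ as $\theta\to0$, so one is forced to invoke CEP over ever longer windows while simultaneously shrinking the anchor level; the heart of the proof is the comparison--function bookkeeping establishing $\liminf_{\theta\to0}B(\theta)=0$, i.e.\ that the continuity modulus at the equilibrium over these growing horizons can still be pushed below any prescribed $\eps$. Once the $\eps$--$\delta$ statement holds for all $\eps>0$, assembling $\sigma,\gamma\in\Kinf$ and the bound \eqref{GSAbschaetzung} is routine.
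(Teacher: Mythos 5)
Your reduction to the $\eps$--$\delta$ formulation, the windowed integral estimate obtained from the cocycle property and shift invariance, and the crossing-time setup are all sound, and they match the opening moves of the paper's proof. But the gap sits exactly where you place it, and it is fatal to this route: you need $\liminf_{\theta\to 0}B(\theta)=0$, i.e.\ that the CEP modulus over horizons of length $L(\theta)=2\psi(\theta)/\alpha(\theta)$ can be pushed below $\eps$ as $\theta\to 0$, and this does not follow from CEP. The CEP property supplies, for each $\eps'$ and each \emph{fixed finite} horizon $h$, some $\delta(\eps',h)>0$, but gives no control whatsoever on how $\delta(\eps',h)$ degrades as $h\to\infty$. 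Since $\alpha\in\K$ need not dominate $\psi$ near zero, $L(\theta)$ can blow up as $\theta\to 0$; if, say, $\delta(\eps',h)$ decays like $e^{-h}$ while $L(\theta)\sim 1/\theta$, then the requirement $\theta\le\delta\bigl(\eps',L(\theta)\bigr)$ fails for every small $\theta$, so $B(\theta)$ never drops below $\eps$ and no contradiction is reached. In short, your argument asks CEP for uniformity over ever longer time windows, which is precisely the kind of uniformity CEP does not provide.

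The paper's proof avoids this by never invoking CEP over a long or growing horizon. It works with the \emph{fixed} horizon $h=1$ and two nested levels $\delta_2<\delta_1<\eps$, chosen via CEP so that data (state and input) below $\delta_2$ cannot produce a state above $\delta_1$ within one time unit, data below $\delta_1$ cannot produce a state above $\eps$ within one time unit, and additionally $\alpha(\delta_1)>\psi(\delta_2)$. Instead of bounding the rise time from \emph{above}, this bounds it from \emph{below}: between the last crossing of $\delta_2$ and the hitting time of $\eps$ at least two time units elapse, at least one of them spent above $\delta_1$. The windowed integral estimate is then exploited in the direction opposite to yours: its left-hand side grows at least linearly at rate $\alpha(\delta_2)$ over the whole excursion (the additive $\psi(\delta_2)$ being absorbed by the term $\alpha(\delta_1)$ accumulated during the time spent above $\delta_1$), while its right-hand side grows at rate $\sigma(\|u\|_{\Uc})$. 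Comparing the two rates yields $\tfrac12\alpha(\delta_2)<\sigma(\|u_k\|_{\Uc})$ along the sequence of putative counterexamples, which is impossible once $\|u_k\|_{\Uc}\to 0$. So the contradiction comes from the smallness of the input measured against a fixed dwell rate, not from continuity over long windows; if you replace your upper bound on $t^\ast-s^\ast$ by this lower-bound and rate-comparison mechanism (adding the intermediate level $\delta_1$ and the condition $\alpha(\delta_1)>\psi(\delta_2)$), your first-crossing framework closes.
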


\begin{proof}
Let $\Sigma$ be norm-to-integral ISS with the corresponding $\alpha,\psi,\sigma$ as in Definition~\ref{def:norm-i_ISS}.

By \cite[Lemma 2]{MiW18b}, $\Sigma$ is ULS if and only if for any $\eps>0$ there
is a $\delta>0$ such that 
\begin{equation}
\label{LS_Restatement}
\|x\|_X \leq\delta\ \wedge \ \|u\|_{\Uc} \leq \delta\ \wedge \  t\geq 0 \;\; \Rightarrow\;\;
\|\phi(t,x,u)\|_X \leq\eps.
\end{equation}
Seeking a contradiction, assume that $\Sigma$ is not ULS. Then there is $\varepsilon>0$ such that for any $\delta>0$ there are
 $x \in B_\delta$, $u\in B_{\delta,\Uc}$ and $t\geq 0$ such that $\|\phi(t,x,u)\|_X = \varepsilon$.
Then there are sequences $\{ x_k \}_{k\in\N}$ in $X$, $\{ u_k\}_{k\in\N}$ in $\Uc$, and $\{t_k\}_{k\in\N} \subset \R_+$ such that $x_k \to 0$ as $k \to \infty$, $u_k\to 0$ as $k\to\infty$ and
\begin{equation*}
    \| \phi(t_k,x_k,u_k) \|_X = \varepsilon \quad \forall k \geq 1.
\end{equation*}

Since $\Sigma$ is CEP, for the above $\eps$ there is a $\delta_1 = \delta_1(\eps,1)$ so that
\begin{eqnarray}
\|x\|_X \leq \delta_1 \ \wedge \ \|u\|_\Uc \leq \delta_1\ \wedge \ t\in [0,1] \srs \|\phi(t,x,u)\|_X < \eps.
\label{eq:ncLF_CEP_def_tilde_delta}
\end{eqnarray}
Define for $k\in\N$ the following time sequence:
\[
t^{1}_k := \sup\{t\in[0,t_k]: \|\phi(t,x_k,u_k)\|_X \leq \delta_1\},
\]
if the supremum is taken over a nonempty set, and set 
$t^{1}_k:= 0$ otherwise.

Again as $\Sigma$ is CEP, for the above $\delta_1$ there is a $\delta_2>0$ so that
\begin{eqnarray}
\|x\|_X\leq \delta_2\ \wedge \ \|u\|_\Uc \leq \delta_2\ \wedge \ t\in [0,1] \srs \|\phi(t,x,u)\|_X < \delta_1.
\label{eq:ncLF_CEP_def_bar_delta}
\end{eqnarray}
Without loss of generality, we assume that $\delta_2$ is chosen small enough so that
\begin{eqnarray}
\alpha(\delta_1) > \psi(\delta_2).
\label{eq:Relation_tilde_delta_and_bar_delta}
\end{eqnarray}
We now define
\[
t^{2}_k := \sup\{t\in[0,t_k]: \|\phi(t,x_k,u_k)\|_X \leq \delta_2\},
\]
if the supremum is taken over a nonempty set, and set $t^{2}_k:= 0$ otherwise.

Since $u_k\to 0$ and $x_k\to 0$ as $k\to\infty$, there is $K>0$ so that $\|u_k\|_\Uc \leq \delta_2$ and 
$\|x_k\|_X \leq \delta_2$ for $k\geq K$.

From now on, we always assume that $k\geq K$.

Using \eqref{eq:ncLF_CEP_def_tilde_delta}, \eqref{eq:ncLF_CEP_def_bar_delta} and the cocycle property,
it is not hard to show that for $k\geq K$ it must hold that $t_k\geq 2$,
as otherwise we arrive at a contradiction to 
$\|\phi(t_k,x_k,u_k)\|_X=\eps$.

Assume that $t_k - t^{1}_k <1$. This implies (since $t_k\geq 2$), that
$t^{1}_k>0$. By the cocycle property we have  
\begin{eqnarray*}
\|\phi(t_k,x_k,u_k)\|_X= \|\phi(t_k - t^{1}_k,\phi(t^{1}_k,x_k,u_k),u_k(\cdot + t^{1}_k)\|_X.
\end{eqnarray*}
The axiom of shift invariance justifies the inequalities
\[
\|u_k(\cdot + t^{1}_k)\|_\Uc \leq \|u_k\|_\Uc \leq \delta_2\leq \delta_1.
\]
Since $\|\phi(t^{1}_k,x_k,u_k)\|_X = \delta_1$, and $t_k - t^{1}_k <1$, we have by \eqref{eq:ncLF_CEP_def_tilde_delta} that 
$\|\phi(t_k,x_k,u_k)\|_X < \eps$, a contradiction.
Hence $t_k - t^{1}_{k}\geq 1$ for all $k\geq K$. 

Analogously, we obtain that  $t^{1}_k - t^{2}_k\geq 1$ and $t_k - t^{2}_k\geq 2$.

Define 
\[
x^{2}_k:=\phi(t^{2}_k,x_k,u_k),\quad u^{2}_k := u_k(\cdot + t^{2}_k)
\]
and
\[
x^{1}_k:=\phi(t^{1}_k,x_k,u_k),\quad u^{1}_k := u_k(\cdot + t^{1}_k).
\]
Due to the axiom of shift invariance $u^{1}_k, u^{2}_k \in\Uc$ and 
\[
\|u^{1}_k\|_\Uc \leq \|u^{2}_k\|_\Uc \leq \|u_k\|_\Uc \leq \delta_2.
\]
Also by the definition of $t^{2}_k$, we have $\|x^{2}_k\|_X =\delta_2$.

Applying \eqref{eq:ni-ISS}, we obtain for $t:=t_k - t^{2}_k$ that 
\begin{align}
\label{nc_LS_proof_Eq1}
\int_0^{t_k - t^{2}_k} \alpha(\|\phi(s,x^{2}_k,u^{2}_k)\|_X) ds &\leq \psi(\|x^{2}_k\|_X) + (t_k - t^{2}_k)\sigma(\|u^{2}_k\|_{\Uc}) \nonumber\\
&\leq \psi(\delta_2) + (t_k - t^{2}_k)\sigma(\|u_k\|_{\Uc}).
\end{align}
On the other hand, changing the integration variable and using the cocycle property, we obtain that
\begin{align*}
\int_0^{t_k - t^{2}_k}\hspace{-4mm} \alpha(\|\phi(s,x^{2}_k,u^{2}_k)&\|_X) ds
= \int_0^{t^{1}_k - t^{2}_k} \hspace{-4mm}\alpha(\|\phi(s,x^{2}_k,u^{2}_k)\|_X) ds +
\int_{t^{1}_k - t^{2}_k}^{t_k - t^{2}_k} \hspace{-4mm}\alpha(\|\phi(s,x^{2}_k,u^{2}_k)\|_X) ds\\
=& \int_0^{t^{1}_k - t^{2}_k} \hspace{-4mm}\alpha(\|\phi(s,x^{2}_k,u^{2}_k)\|_X) ds +
\int_{0}^{t_k - t^{1}_k} \hspace{-4mm}\alpha(\|\phi(s+ t^{1}_k - t^{2}_k,x^{2}_k,u^{2}_k)\|_X) ds\\
=& \int_0^{t^{1}_k - t^{2}_k} \hspace{-4mm}\alpha(\|\phi(s,x^{2}_k,u^{2}_k)\|_X) ds +
\int_{0}^{t_k - t^{1}_k} \hspace{-4mm}\alpha(\|\phi(s,x^{1}_k,u^{1}_k)\|_X) ds.
\end{align*}
For the last transition we have used that for every $s\in[0,t_k - t^{1}_k]$
\begin{align*}
\phi(s+ t^{1}_k - t^{2}_k,x^{2}_k,u^{2}_k)
&= \phi\big(s+ t^{1}_k - t^{2}_k,\phi(t^{2}_k,x_k,u_k),u_k(\cdot + t^{2}_k)\big)\\
&= \phi\big(s,\phi(t^{1}_k - t^{2}_k,\phi(t^{2}_k,x_k,u_k),u_k(\cdot + t^{2}_k)),u_k(\cdot + t^{1}_k)\big)\\
&= \phi\big(s,\phi(t^{1}_k,x_k,u_k),u_k(\cdot + t^{1}_k)\big)\\
&= \phi(s,x^{1}_k,u^{1}_k).
\end{align*}
By definition of $t^{2}_k$ and $t^{1}_k$, we have that 
\begin{align*}
\|\phi(s,x^{2}_k,u^{2}_k)\|_X  
&= \big\|\phi\big(s,\phi(t^{2}_k,x_k,u_k),u_k(\cdot + t^{2}_k)\big)\big\|_X \\
&= \|\phi(s+t^{2}_k,x_k,u_k)\|_X 
\geq \delta_2,\qquad s\in[0,t_k-t^{2}_k],
\end{align*}
and, analogously,
\[
\|\phi(s,x^{1}_k,u^{1}_k)\|_X \geq \delta_1,\quad s\in[0,t_k-t^{1}_k].
\]
Continuing the above estimates and using that $t_k - t^{1}_k\geq 1$ and $\alpha(\delta_1)>\alpha(\delta_2)$, we arrive at
\begin{align*}
\int_0^{t_k - t^{2}_k} \alpha(\|\phi(s,x^{2}_k,u^{2}_k)\|_X) ds 
&\geq (t^{1}_k - t^{2}_k)\alpha(\delta_2) + (t_k - t^{1}_k)\alpha(\delta_1)\\
&\geq (t^{1}_k - t^{2}_k)\alpha(\delta_2) + (t_k - t^{1}_k-1)\alpha(\delta_1) + \alpha(\delta_1)\\
&\geq (t_k - t^{2}_k - 1)\alpha(\delta_2) + \alpha(\delta_1).
\end{align*}
Since $t_k - t^{2}_k\geq 2$ and in view of \eqref{eq:Relation_tilde_delta_and_bar_delta}, we derive
\begin{eqnarray}
\label{nc_LS_proof_Eq2}
\int_0^{t_k - t^{2}_k} \alpha(\|\phi(s,x^{2}_k,u^{2}_k)\|_X) ds >
\frac{t_k - t^{2}_k}{2}\alpha(\delta_2) + \psi(\delta_2).
\end{eqnarray}
Combining inequalities \eqref{nc_LS_proof_Eq1} and \eqref{nc_LS_proof_Eq2}, we obtain
\begin{eqnarray*}
\frac{t_k - t^{2}_k}{2}\alpha(\delta_2)  < (t_k - t^{2}_k)\sigma(\|u_k\|_{\Uc}).
\end{eqnarray*}
This leads to
\begin{eqnarray*}
\frac{1}{2}\alpha(\delta_2)  < \sigma(\|u_k\|_{\Uc}), \quad k\geq K.
\end{eqnarray*}
Finally, since $\lim_{k\to\infty}\|u_k\|_\Uc= 0$, letting $k\to\infty$, we come to a contradiction.
This shows that $\Sigma$ is ULS.
\end{proof}

Now we combine the derived results into a criterion of ISS in terms of norm-to-integral ISS.
\begin{theorem}
\label{thm:ncISS_LF_sufficient_condition_NEW}
Let $\Sigma$ be a forward complete control system. Then $\Sigma$ is ISS if and only if $\Sigma$ is norm-to-integral ISS and has CEP and BRS properties. 
\end{theorem}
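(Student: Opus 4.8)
The plan is to prove both implications of the equivalence by assembling the results already established in this section together with the characterization in Theorem~\ref{thm:UAG_equals_ULIM_plus_LS}, so that essentially no new technical work is required.

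For the forward implication, suppose $\Sigma$ is ISS. That norm-to-integral ISS holds is precisely the content of Proposition~\ref{prop:ISS-implies-norm-i-ISS}, so nothing further is needed there. To obtain CEP and BRS I would exploit the single ISS estimate $\|\phi(t,x,u)\|_X \leq \beta(\|x\|_X,t) + \gamma(\|u\|_\Uc)$ together with the fact that, since $\beta(r,\cdot)\in\LL$ is decreasing, we have $\beta(r,t)\leq\beta(r,0)$ for every $t\geq 0$. For CEP: given $\eps>0$ and $h>0$, the estimate yields $\|\phi(t,x,u)\|_X \leq \beta(\|x\|_X,0) + \gamma(\|u\|_\Uc)$ for all $t\in[0,h]$, and continuity of $\beta(\cdot,0)$ and $\gamma$ at $0$ lets me pick $\delta>0$ small enough that $\|x\|_X\leq\delta$ and $\|u\|_\Uc\leq\delta$ force the right-hand side below $\eps$. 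For BRS: for $\|x\|_X\leq C$, $\|u\|_\Uc\leq C$ and $t\in[0,\tau]$ the same bound gives $\|\phi(t,x,u)\|_X \leq \beta(C,0)+\gamma(C)<\infty$, which is exactly the finiteness of the supremum demanded by the BRS definition.

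For the converse implication, suppose $\Sigma$ is norm-to-integral ISS and enjoys CEP and BRS. I would invoke Theorem~\ref{thm:UAG_equals_ULIM_plus_LS}, which states that ISS is equivalent to the conjunction of ULIM, ULS and BRS. The BRS property is assumed. The ULIM property follows from norm-to-integral ISS by Proposition~\ref{prop:ncISS_implies_ULIM}. Finally, the ULS property follows by combining norm-to-integral ISS with CEP via Proposition~\ref{prop:ncISS_plus_CEP_implies_ULS}. With ULIM, ULS and BRS all in hand, Theorem~\ref{thm:UAG_equals_ULIM_plus_LS} delivers ISS, completing the proof.

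Since both directions reduce to citing previously proved statements and one elementary monotonicity argument, there is no genuine obstacle in this theorem itself; all the real difficulty has been absorbed into Proposition~\ref{prop:ncISS_plus_CEP_implies_ULS}, whose delicate cocycle and sublevel-time construction is what makes the CEP hypothesis usable, and into the external characterization Theorem~\ref{thm:UAG_equals_ULIM_plus_LS}. The only point requiring minor care is verifying CEP and BRS directly from the ISS bound, where one must remember to use $\beta(r,t)\leq\beta(r,0)$ rather than any decay in $t$.
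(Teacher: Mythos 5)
Your proof is correct and follows essentially the same route as the paper's: the forward direction combines Proposition~\ref{prop:ISS-implies-norm-i-ISS} with the elementary verification that ISS implies CEP and BRS (which the paper dismisses as \q{clear}, and which your observation $\beta(r,t)\le\beta(r,0)$ supplies correctly), while the backward direction assembles ULIM via Proposition~\ref{prop:ncISS_implies_ULIM}, ULS via Proposition~\ref{prop:ncISS_plus_CEP_implies_ULS}, and the assumed BRS, and then invokes Theorem~\ref{thm:UAG_equals_ULIM_plus_LS}. The only detail worth adding is that CEP by definition also requires $0$ to be an equilibrium point, which follows immediately from the ISS estimate with $x=0$ and $u=0$, since $\beta(0,t)=\gamma(0)=0$.
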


\begin{proof}
\q{$\Rightarrow$}. Clearly, ISS implies CEP and BRS properties. By Proposition~\ref{prop:ISS-implies-norm-i-ISS}, ISS implies  norm-to-integral ISS.

\q{$\Leftarrow$}.
Propositions~\ref{prop:ncISS_implies_ULIM} and \ref{prop:ncISS_plus_CEP_implies_ULS} imply that $\Sigma$ is ULIM and ULS.
Since $\Sigma$ is BRS, Theorem~\ref{thm:UAG_equals_ULIM_plus_LS} shows that $\Sigma$ is ISS.
\end{proof}

Some forward complete systems have the BRS and CEP properties intrinsically, which leads to the equivalence between ISS and norm-to-integral ISS for such classes of systems.
In particular, the following slight extension of \cite[Theorem 1]{Son98} holds:
\begin{corollary}
\label{cor:ni-ISS-and-ISS-ODEs} 
Let $X:=\R^n$ and $\Uc:=L^\infty(\R_+,\R^m)$. Consider ordinary differential equations of the form
\begin{eqnarray}
\dot{x} = f(x,u), \quad t>0,
\label{eq:ODEs}
\end{eqnarray}
with $f$ which is Lipschitz continuous in both variables. 
Define $\phi(\cdot,x,u)$ as the maximal solution of \eqref{eq:ODEs} with an initial condition $x \in \R^n$ and an input $u\in\Uc$.

Assume that \eqref{eq:ODEs} is forward complete. Then the following statements are equivalent:
\begin{itemize}
	\item[(i)] \eqref{eq:ODEs} is ISS
	\item[(ii)] \eqref{eq:ODEs} is integral-to-integral ISS
	\item[(iii)] \eqref{eq:ODEs} is norm-to-integral ISS
\end{itemize}
\end{corollary}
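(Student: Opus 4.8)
The plan is to establish the cycle (i) $\Rightarrow$ (ii) $\Rightarrow$ (iii) $\Rightarrow$ (i). Two of these arrows are essentially free, while the first is the substantive one and relies on finite-dimensional ODE theory rather than on the abstract machinery of the preceding sections.

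I would begin with the cheap implication (ii) $\Rightarrow$ (iii), which exploits that the input space is $\Uc = L^\infty(\R_+,\R^m)$. For $u \in \Uc$ one has $\|u(s)\|_{\R^m} \leq \|u\|_\Uc$ for almost every $s$, so monotonicity of $\sigma \in \Kinf$ gives $\int_0^t \sigma(\|u(s)\|_{\R^m})\,ds \leq t\,\sigma(\|u\|_\Uc)$. Thus the integral-to-integral estimate \eqref{eq:ii-ISS} entails the norm-to-integral estimate \eqref{eq:ni-ISS} verbatim, with the same triple $\alpha,\psi,\sigma$.

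Next I would prove (iii) $\Rightarrow$ (i) by invoking Theorem~\ref{thm:ncISS_LF_sufficient_condition_NEW}, for which it remains only to check that a forward complete system \eqref{eq:ODEs} with Lipschitz $f$ automatically has the CEP and BRS properties. Evaluating \eqref{eq:ni-ISS} at $x=0$, $u=0$ forces $\phi(\cdot,0,0)\equiv 0$ (hence $f(0,0)=0$ and $0$ is an equilibrium). Writing $m(t):=\|\phi(t,x,u)\|_X$ and using $\|f(\xi,v)\|=\|f(\xi,v)-f(0,0)\|\leq L(\|\xi\|+\|v\|)$ on the relevant bounded set, the integral form of the solution together with Gronwall's inequality yields $m(t)\leq(\|x\|_X + Lt\|u\|_\Uc)\,e^{Lt}$. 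Specializing to $t\in[0,h]$ gives CEP, and to $t\in[0,\tau]$ with $\|x\|_X,\|u\|_\Uc\leq C$ gives BRS (for merely locally Lipschitz $f$ one localizes and uses forward completeness to bound the reachability sets). Theorem~\ref{thm:ncISS_LF_sufficient_condition_NEW} then produces ISS.

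The main obstacle is (i) $\Rightarrow$ (ii), which is exactly the content that cannot be recovered from Theorem~\ref{thm:ncISS_LF_sufficient_condition_NEW}, because the norm-based ISS bound controls the state only through $\|u\|_\Uc$, whereas \eqref{eq:ii-ISS} demands the pointwise quantity $\|u(s)\|_{\R^m}$ on the right-hand side. The way I would supply this is via the converse ISS Lyapunov theorem for ODEs (Sontag and Wang): ISS yields a smooth $V$ with $\underline{\alpha}(\|x\|_X)\leq V(x)\leq\overline{\alpha}(\|x\|_X)$ and the dissipation inequality $\langle\nabla V(x),f(x,v)\rangle\leq-\alpha(\|x\|_X)+\sigma(\|v\|_{\R^m})$. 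Evaluating along a trajectory and integrating from $0$ to $t$ gives $\int_0^t\alpha(\|\phi(s,x,u)\|_X)\,ds\leq V(x)-V(\phi(t,x,u))+\int_0^t\sigma(\|u(s)\|_{\R^m})\,ds\leq\overline{\alpha}(\|x\|_X)+\int_0^t\sigma(\|u(s)\|_{\R^m})\,ds$, which is precisely \eqref{eq:ii-ISS}. This reproves the hard direction of \cite[Theorem~1]{Son98}, which one could alternatively cite directly, the present statement being its extension by property (iii). I expect the genuine work to lie entirely in this step, since the pointwise dissipation inequality is the only ingredient that converts the supremum norm of the input into its running integral.
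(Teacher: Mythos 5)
Your proposal is correct, and its core ingredients are the same two as in the paper's proof --- Sontag's ODE result \cite[Theorem 1]{Son98} (equivalently, the smooth converse ISS Lyapunov theorem of \cite{SoW95} behind it) and the abstract characterization Theorem~\ref{thm:ncISS_LF_sufficient_condition_NEW} together with the CEP/BRS check --- but you organize them differently. The paper proves two equivalences in parallel: (i) $\Leftrightarrow$ (ii) is delegated wholesale to \cite[Theorem 1]{Son98}, and (i) $\Leftrightarrow$ (iii) follows from Theorem~\ref{thm:ncISS_LF_sufficient_condition_NEW} once CEP and BRS are known (BRS by citing \cite[Proposition 5.1]{LSW96}). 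You instead close the cycle (i) $\Rightarrow$ (ii) $\Rightarrow$ (iii) $\Rightarrow$ (i). This buys two things. First, since (ii) $\Rightarrow$ (iii) is a one-line monotonicity observation ($\|u(s)\|_{\R^m}\le\|u\|_{\Uc}$ a.e.), only the \emph{hard} direction of Sontag's theorem (ISS implies integral-to-integral ISS) is ever used; its converse is recovered for free around the cycle from the abstract theorem, so your argument isolates exactly where the finite-dimensional content sits. Second, you inline a proof of that hard direction by integrating the dissipation inequality of a smooth converse ISS Lyapunov function along trajectories, making the corollary nearly self-contained where the paper is citation-based and shorter. You also explicitly derive that $0$ is an equilibrium by evaluating \eqref{eq:ni-ISS} at $(x,u)=(0,0)$; the paper's appeal to ``the CEP property'' silently presupposes this.

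One caveat. Your Gronwall bound $m(t)\le(\|x\|_X+Lt\|u\|_{\Uc})e^{Lt}$ yields CEP and BRS only when $f$ is globally Lipschitz, in which case forward completeness would be automatic and the corollary's hypothesis redundant; the intended reading (consistent with \cite{Son98,SoW95} and with the paper citing \cite[Proposition 5.1]{LSW96}) is Lipschitz on bounded sets. For CEP, localizing to a ball of radius $\eps$ does make your argument rigorous. For BRS, however, the parenthetical ``one localizes and uses forward completeness to bound the reachability sets'' is not a proof: the implication ``forward complete $\Rightarrow$ BRS'' for locally Lipschitz finite-dimensional systems is precisely the nontrivial content of \cite[Proposition 5.1]{LSW96} (the closed ball in $L^\infty(\R_+,\R^m)$ is not compact, so no direct compactness argument over initial data and inputs applies). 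Replace that parenthetical by the citation, as the paper does, and your proof is complete.
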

%

\begin{proof}
(i) $\Iff$ (ii). Was shown in \cite[Theorem 1]{Son98}. Here Lipschitz continuity in both variables is used, as the proof of this equivalence in 
\cite[Theorem 1]{Son98} is based on the smooth ISS converse Lyapunov theorem in \cite{SoW95}, which requires at least Lipschitz continuity in both variables (actually it was assumed that $f$ is continuously differentiable in $(x,u)$).

(i) $\Iff$ (iii). It is well-known that under made assumptions on $f$ a forward complete system \eqref{eq:ODEs} satisfies the BRS property (see \cite[Proposition 5.1]{LSW96}) and CEP property. The claim follows from Theorem~\ref{thm:ncISS_LF_sufficient_condition_NEW}.
\end{proof}

For linear systems with admissible control operators, we have:
\begin{proposition}
\label{prop:ISS-implies-iiISS-linear-systems} 
Let $X$ and $U$ be Banach spaces and let $\Uc:=L^\infty([0,\infty),U)$. Consider the system \eqref{InfiniteDim2} and assume that
$A$ generates a $C_0$-semigroup, $B\in L(U,X_{-1})$ is $\infty$-admissible and for every initial condition $x_0\in X$ and 
 every input $u\in \Uc$ the mild solution $x:[0,\infty)\rightarrow X$ of \eqref{InfiniteDim2} is continuous.

Then \eqref{InfiniteDim2} is ISS if and only if \eqref{InfiniteDim2} is norm-to-integral ISS.
\end{proposition}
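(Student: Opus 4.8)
The plan is to apply Theorem~\ref{thm:ncISS_LF_sufficient_condition_NEW} directly, since the heavy lifting has already been done there. First I would observe that, under the stated hypotheses, the triple $(X, L^\infty([0,\infty),U), \phi)$ with $\phi$ given by \eqref{eq:Linear-sys-solution-map} is a forward-complete control system in the sense of Definition~\ref{Steurungssystem}; this is precisely the setting of Example~\ref{exam2}, where $\infty$-admissibility together with continuity of the mild solutions guarantees that all four system axioms hold.

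The key structural point is that this class of systems always enjoys the CEP and BRS properties, independently of any stability assumption. As recorded in Example~\ref{exam2b}, both follow from the admissibility estimate \eqref{eq:Bounds-on-convolution} and linearity. Concretely, the mild solution satisfies $\|\phi(t,x_0,u)\|_X \leq \|T(t)\|\,\|x_0\|_X + \kappa(t)\,\|u\|_{\Uc}$, where one uses that the $L^\infty$-norm of $u$ restricted to $[0,t)$ is dominated by $\|u\|_{\Uc}$. Since $(T(t))_{t\ge 0}$ is a $C_0$-semigroup, $\sup_{t\in[0,\tau]}\|T(t)\|<\infty$ for every $\tau>0$, and $\kappa$ is bounded on compact time intervals; this immediately yields boundedness of reachability sets (BRS). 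The same linear bound forces $\|\phi(t,x,u)\|_X$ to be arbitrarily small when $\|x\|_X$ and $\|u\|_{\Uc}$ are small and $t$ ranges over a fixed interval $[0,h]$, which is exactly the CEP property.

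With forward completeness, CEP and BRS in hand, the equivalence is immediate. Theorem~\ref{thm:ncISS_LF_sufficient_condition_NEW} characterizes ISS of a forward-complete system as the conjunction of norm-to-integral ISS, CEP and BRS. Since CEP and BRS hold automatically for the system \eqref{InfiniteDim2} under consideration, that characterization collapses to the statement that ISS is equivalent to norm-to-integral ISS, which is what we wish to prove.

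There is essentially no obstacle here: the proposition is a direct corollary of the general criterion Theorem~\ref{thm:ncISS_LF_sufficient_condition_NEW} together with the fact, established in Example~\ref{exam2b}, that admissible linear systems satisfy CEP and BRS unconditionally. The only mild care needed is in verifying that the admissibility constant $\kappa(t)$ from \eqref{eq:Bounds-on-convolution} is locally bounded in $t$, so that both CEP and BRS go through uniformly on compact time intervals; this is routine and does not require the stability hypothesis.
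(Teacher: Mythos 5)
Your proof is correct and takes essentially the same route as the paper: the paper's own argument likewise notes that the linear admissible system satisfies CEP and BRS (citing Example~\ref{exam2b}) and then concludes via the norm-to-integral ISS characterization --- indeed, where the paper cites Theorem~\ref{t:ISSLyapunovtheorem} this is evidently a misprint for Theorem~\ref{thm:ncISS_LF_sufficient_condition_NEW}, which is precisely the theorem you invoke. The paper also records an alternative proof of the converse direction via a generalized Datko lemma and admissibility, but your primary argument coincides with its primary one.
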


\begin{proof}
"$\Rightarrow$". Follows by Proposition~\ref{prop:ISS-implies-norm-i-ISS}.

"$\Leftarrow$". As mentioned in Example~\ref{exam2b}, under the assumptions made \eqref{InfiniteDim2} is a well-posed control system satisfying CEP and BRS properties.
Theorem~\ref{t:ISSLyapunovtheorem} finishes the proof.

Alternative proof of "$\Leftarrow$". From norm-to-integral ISS of \eqref{InfiniteDim2} for $u\equiv 0$ the exponential stability of the semigroup generated by $A$ follows by means of a generalized Datko lemma \cite[Theorem 2]{Lit89}. As we assume that $B$ is $\infty$-admissible, ISS follows by \cite[Proposition 2.10]{JNP18}.
\end{proof}

\subsection{Remark on input-to-state practical stability}

In some cases, it is impossible (as in quantized control)
or too costly to construct a feedback that results in an ISS closed-loop system.
For these applications, one defines the following relaxation of the ISS property:
\begin{definition}
\label{Def:ISpS_wrt_set}
A control system $\Sigma=(X,\Uc,\phi)$ is called \emph{(uniformly) input-to-state practically stable (ISpS)}, if there exist $\beta \in \KL$, $\gamma \in \Kinf$ and $c>0$
such that for all $ x \in X$, $ u\in \Uc$ and $ t\geq 0$ the following holds:
\begin {equation}
\label{isps_sum}
\| \phi(t,x,u) \|_X \leq \beta(\| x \|_X,t) + \gamma( \|u\|_{\Uc}) + c.
\end{equation}
\end{definition}
The notion of ISpS has been proposed in \cite{JTP94} and has become very useful for 
control in the presence of quantization errors \cite{ShL12, JLR09}, sample-data control
\cite{NKK15} to name a few examples.

One of the requirements in Theorem~\ref{thm:ncISS_LF_sufficient_condition_NEW}
is that the CEP property holds. If this property is not available, we can still infer input-to-state practical stability of $\Sigma$, using the main result in \cite{Mir19a}.
\begin{theorem}
\label{thm:ncISpS_LF_sufficient_condition_NEW}
Let $\Sigma$ be a forward complete control system, which is BRS. If $\Sigma$ is norm-to-integral ISS, then $\Sigma$ is ISpS.
\end{theorem}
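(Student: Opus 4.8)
The plan is to mirror the proof of Theorem~\ref{thm:ncISS_LF_sufficient_condition_NEW}, but to drop precisely the step that required the CEP property. Recall that in the ISS case the argument proceeded through three ingredients: ULIM (obtained from norm-to-integral ISS via Proposition~\ref{prop:ncISS_implies_ULIM}), ULS (obtained from norm-to-integral ISS \emph{together with} CEP via Proposition~\ref{prop:ncISS_plus_CEP_implies_ULS}), and BRS; these yield ISS by Theorem~\ref{thm:UAG_equals_ULIM_plus_LS}. Since CEP is no longer assumed, I cannot reach ULS and hence cannot conclude genuine ISS. Instead I aim for the weaker ISpS, in which the additive constant $c$ in \eqref{isps_sum} compensates for the loss of control of the flow near the origin. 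Concretely, the strategy is: (i) extract ULIM from norm-to-integral ISS; (ii) combine ULIM with the assumed BRS property; (iii) invoke the practical-stability criterion from \cite{Mir19a} to obtain ISpS.

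For step (i) I would apply Proposition~\ref{prop:ncISS_implies_ULIM} verbatim: since $\Sigma$ is norm-to-integral ISS, it is ULIM, with some gain $\gamma\in\K$. This step uses only forward completeness and norm-to-integral ISS and makes no use of CEP, so it carries over unchanged from the ISS setting. For steps (ii) and (iii), the BRS property is available by hypothesis, and the main result of \cite{Mir19a} supplies the missing link, namely that for a forward complete control system the conjunction of the uniform limit property and bounded reachability sets implies input-to-state practical stability. Feeding the ULIM gain $\gamma$ from step (i) and the BRS bounds into that criterion then produces $\beta\in\KL$, $\gamma\in\Kinf$ and $c>0$ satisfying \eqref{isps_sum}, which is the desired conclusion.

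The conceptual point — and the only place where genuine care is needed — is understanding why BRS alone can substitute for ULS once one settles for practical stability. ULIM guarantees that every trajectory starting in a ball of radius $r$ enters the neighborhood of radius $\eps+\gamma(\|u\|_{\Uc})$ within a time $\tau(\eps,r)$, while BRS caps the size of the trajectory on the transient window $[0,\tau(\eps,r)]$ by a finite quantity depending only on $r$ and $\tau$. In the ISS characterization, ULS was needed to dominate this transient by a $\Kinf$-function of the initial norm that vanishes as $\|x\|_X\to 0$; absent CEP this vanishing is unavailable, but the mere \emph{boundedness} of the transient coming from BRS is enough to absorb the excess into the constant $c$ of the ISpS estimate.

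I therefore expect the main obstacle to be not a new estimate but the bookkeeping of invoking \cite{Mir19a}: one must verify that the precise hypotheses of the cited ISpS criterion are met by the pair (ULIM $\wedge$ BRS), in particular that the forward-completeness and reachability assumptions there coincide with those in force here, and that the gain $\gamma\in\K$ produced by Proposition~\ref{prop:ncISS_implies_ULIM} is of the type required. Once this matching is confirmed, the three steps compose directly and the theorem follows.
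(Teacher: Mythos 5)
Your proposal is correct and follows exactly the paper's own proof: Proposition~\ref{prop:ncISS_implies_ULIM} yields ULIM from norm-to-integral ISS, and then the ISpS criterion \cite[Theorem III.1]{Mir19a} applied to the ULIM and BRS properties gives the conclusion. The additional discussion of why boundedness of transients can be absorbed into the constant $c$ is a reasonable intuition for why the cited criterion works, but it is not needed as a separate step.
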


\begin{proof}
Proposition~\ref{prop:ncISS_implies_ULIM} implies that $\Sigma$ is ULIM.
Since $\Sigma$ is also BRS, \cite[Theorem III.1]{Mir19a} shows that $\Sigma$ is ISpS.
\end{proof}

\section{Non-coercive ISS Lyapunov theorem}
\label{sec:Non-coercive ISS Lyapunov theorem}

For a real-valued function $b:\R_+\to\R$ define the \emph{right-hand upper and lower Dini derivatives} at $t\in\R_+$ by
\begin{eqnarray*}
D^+b(t) := \mathop{\overline{\lim}} \limits_{h \rightarrow +0} \frac{b(t+h) - b(t)}{h},\qquad
D_+b(t) := \mathop{\underline{\lim}} \limits_{h \rightarrow +0} \frac{b(t+h) - b(t)}{h}
\end{eqnarray*}
respectively. Note that for all $b:\R_+\to\R$ and all $t\in\R_+$ it holds that 
\begin{eqnarray}
D^+b(t) = -D_+(-b(t)).
\label{eq:Dini-lower-upper}
\end{eqnarray}

Let $x \in X$ and $V$ be a
real-valued function defined in a neighborhood of $x$. The \emph{(right-hand upper) Dini derivative 
of $V$ at $x$ corresponding to the input $u \in\Uc$ along the trajectories of $\Sigma$} is defined by
\begin{equation}
\label{ISS_LyapAbleitung}
\dot{V}_u(x):=D^+ V\big(\phi(\cdot,x,u)\big)\Big|_{t=0}=\mathop{\overline{\lim}} \limits_{t \rightarrow +0} {\frac{1}{t}\Big(V\big(\phi(t,x,u)\big)-V(x)\Big) }.
\end{equation}

%


We need a lemma on derivatives of monotone functions.
\begin{lemma}
\label{lem:Integrals-of-monotone-functions} 
Let $b:\R_+\to\R$ be a nonincreasing function. Then for each $t\in\R_+$ it holds that
\begin{eqnarray}
b(t) \geq D^+ \int_0^t b(s) ds \geq D_+ \int_0^t b(s) ds \geq \lim_{h\to +0}b(t+h).
\label{eq:Lower-Dini-Derivative}
\end{eqnarray}
\end{lemma}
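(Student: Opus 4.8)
The plan is to introduce $B(t):=\int_0^t b(s)\,ds$ and to prove the four-term chain one link at a time. First I would check that $B$ is well defined: on any compact interval $[0,t]$ monotonicity gives $b(t)\le b(s)\le b(0)$, so $b$ is bounded there, and since monotone functions are Borel measurable, $b$ is Lebesgue integrable on $[0,t]$. The middle link $D^+B(t)\ge D_+B(t)$ needs no work, being the trivial inequality $\limsup\ge\liminf$ applied to the common family of right-hand difference quotients $\big(B(t+h)-B(t)\big)/h$.

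The core step is to express the right-hand difference quotient as an average of $b$ and to bound it by monotonicity. For $h>0$,
\[
\frac{B(t+h)-B(t)}{h}=\frac{1}{h}\int_t^{t+h} b(s)\,ds,
\]
and because $b$ is nonincreasing we have $b(t+h)\le b(s)\le b(t)$ for all $s\in[t,t+h]$; integrating over $[t,t+h]$ and dividing by $h$ yields
\[
b(t+h)\le \frac{B(t+h)-B(t)}{h}\le b(t),\qquad h>0.
\]

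To close the argument I would pass to the limit on each side. Taking $\limsup_{h\to+0}$ of the right-hand inequality gives $D^+B(t)\le b(t)$, the leftmost link. Taking $\liminf_{h\to+0}$ of the left-hand inequality gives $D_+B(t)\ge \liminf_{h\to+0}b(t+h)$, and since a nonincreasing $b$ possesses a right-hand limit at every point, this equals $\lim_{h\to+0}b(t+h)$, the rightmost link. Concatenating the three established inequalities produces \eqref{eq:Lower-Dini-Derivative}.

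I expect no serious obstacle: the estimate is a direct consequence of monotonicity, and the only points needing care are the local integrability of $b$ (handled by boundedness plus monotone measurability) and the existence of the one-sided limit $\lim_{h\to+0}b(t+h)$, which is automatic for monotone functions. The main thing to watch is simply keeping the direction of the monotonicity inequalities consistent when integrating.
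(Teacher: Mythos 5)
Your proof is correct and follows essentially the same route as the paper: both bound the averaged difference quotient $\frac{1}{h}\int_t^{t+h} b(s)\,ds$ above by $b(t)$ and below by $b(t+h)$ using monotonicity, then take $\limsup$ and $\liminf$ respectively. Your additional remarks on local integrability of $b$ and the existence of the right-hand limit of a monotone function are details the paper leaves implicit, but they change nothing in substance.
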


\begin{proof}
As $b$ is nonincreasing, it is Riemann integrable on any finite subinterval of $\R_+$, and thus the integrals in the formulation of the result make sense.
Pick any $t\geq 0$. By the definition of the Dini derivative and using
monotonicity, it holds that
\begin{eqnarray*}
D^+ \int_0^t b(s) ds 
= \mathop{\overline{\lim}} \limits_{h \rightarrow +0} \frac{1}{h}\Big({\int_0^{t+h} b(s) ds - \int_0^t b(s) ds}\Big)
&=& \mathop{\overline{\lim}} \limits_{h \rightarrow +0} \frac{1}{h}{\int_t^{t+h} b(s) ds}\\
&\leq& \mathop{\overline{\lim}} \limits_{h \rightarrow +0} \frac{1}{h}{\int_t^{t+h} b(t) ds} = b(t).
\end{eqnarray*}
On the other hand, we have that 
\begin{eqnarray*}
D_+ \int_0^t b(s) ds 
&\geq& \mathop{\underline{\lim}} \limits_{h \rightarrow +0} \frac{1}{h}{\int_t^{t+h} b(t+h) ds}
= \mathop{\underline{\lim}} \limits_{h \rightarrow +0} b(t+h).
\end{eqnarray*}
The inequality $D^+ \int_0^t b(s) ds \geq D_+ \int_0^t b(s)ds$ is clear.
\end{proof}

For stability analysis of nonlinear control systems, Lyapunov functions are an essential tool.
\begin{definition}
\label{def:noncoercive_ISS_LF}
Consider a control system $\Sigma = (X,\Uc,\phi)$.
A continuous function $V:X \to \R_+$ is called a \emph{non-coercive ISS Lyapunov function} for $\Sigma$, if there exist $\psi_2,\alpha \in \Kinf$ and $\sigma \in \K$ such that
\begin{equation}
\label{LyapFunk_1Eig_nc_ISS}
0 < V(x) \leq \psi_2(\|x\|_X), \quad \forall x \in X \setminus \{0\},
\end {equation}
and the Dini derivative of $V$ along the trajectories of $\Sigma$ for all $x \in X$ and $u\in \Uc$ satisfies
\begin{equation}
\label{DissipationIneq_nc}
\dot{V}_u(x) \leq -\alpha(\|x\|_X)  + \sigma(\|u\|_{\Uc}).
\end{equation}

Moreover, if \eqref{DissipationIneq_nc} holds just for $u=0$, we call $V$ a \emph{non-coercive Lyapunov function} for the undisturbed system $\Sigma$.
If additionally there is $\psi_1\in\Kinf$ so that the following estimate holds:
\begin{equation}
\label{LyapFunk_1Eig_LISS}
\psi_1(\|x\|_X) \leq V(x) \leq \psi_2(\|x\|_X), \quad \forall x \in X,
\end{equation}
then $V$ is called a \emph{coercive ISS Lyapunov function} for $\Sigma$.
\end{definition}

Note that continuity of $V$ and the estimate \eqref{LyapFunk_1Eig_nc_ISS} imply that $V(0)=0$.

The next proposition shows that the norm-to-integral ISS property arises naturally in the theory of ISS Lyapunov functions:
\begin{proposition}
\label{prop:ncLF_implies_norm-to-integral-ISS} 
Let $\Sigma=(X,\Uc,\phi)$ be a forward complete control system. Assume
that there exists a non-coercive ISS Lyapunov function for $\Sigma$. Then $\Sigma$ is norm-to-integral ISS.
\end{proposition}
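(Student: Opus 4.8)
The plan is to track the value of the Lyapunov function along a fixed trajectory and to convert the pointwise dissipation estimate \eqref{DissipationIneq_nc} into the integral bound \eqref{eq:ni-ISS}.

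First I would fix $x\in X$ and $u\in\Uc$ and set $W(t):=V(\phi(t,x,u))$, which is continuous in $t$ by the continuity axiom \ref{axiom:Continuity} together with continuity of $V$. Using the cocycle property \ref{axiom:Cocycle} and the identity property \ref{axiom:Identity}, for each $t\geq 0$ the difference quotient rewrites as
\[
\frac{W(t+h)-W(t)}{h}=\frac{V\big(\phi(h,\phi(t,x,u),u(t+\cdot))\big)-V\big(\phi(t,x,u)\big)}{h},
\]
so that taking $\mathop{\overline{\lim}}_{h\to+0}$ yields $D^+W(t)=\dot V_{u(t+\cdot)}(\phi(t,x,u))$. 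Applying the dissipation inequality \eqref{DissipationIneq_nc} at the state $\phi(t,x,u)$ with input $u(t+\cdot)$, and then using the axiom of shift invariance together with monotonicity of $\sigma$ to estimate $\sigma(\|u(t+\cdot)\|_\Uc)\leq\sigma(\|u\|_\Uc)$, I obtain the pointwise bound
\[
D^+W(t)\leq -\alpha(\|\phi(t,x,u)\|_X)+\sigma(\|u\|_\Uc),\qquad t\geq 0.
\]

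The second step is the integration. Since $t\mapsto\phi(t,x,u)$ is continuous and $\alpha,\|\cdot\|_X$ are continuous, the map $s\mapsto\alpha(\|\phi(s,x,u)\|_X)$ is continuous, so $h(t):=\int_0^t\alpha(\|\phi(s,x,u)\|_X)\,ds$ is continuously differentiable with $h'(t)=\alpha(\|\phi(t,x,u)\|_X)$. I would then consider
\[
F(t):=W(t)+h(t)-t\,\sigma(\|u\|_\Uc).
\]
Because $h$ and the linear term are differentiable, $D^+F(t)=D^+W(t)+\alpha(\|\phi(t,x,u)\|_X)-\sigma(\|u\|_\Uc)\leq 0$ for all $t\geq 0$ by the bound of the previous step. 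Invoking the classical fact that a continuous function with everywhere nonpositive upper Dini derivative is nonincreasing, I conclude $F(t)\leq F(0)=W(0)=V(x)$ for all $t\geq 0$.

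Finally, rearranging $F(t)\leq V(x)$ and using $W(t)=V(\phi(t,x,u))\geq 0$ gives
\[
\int_0^t\alpha(\|\phi(s,x,u)\|_X)\,ds\leq V(x)+t\,\sigma(\|u\|_\Uc).
\]
Since $V(x)\leq\psi_2(\|x\|_X)$ for all $x$ (the bound \eqref{LyapFunk_1Eig_nc_ISS} extends to $x=0$ because $V(0)=0=\psi_2(0)$), this is exactly \eqref{eq:ni-ISS} with $\psi:=\psi_2\in\Kinf$. The only bookkeeping mismatch is that Definition~\ref{def:norm-i_ISS} requires $\sigma\in\Kinf$ whereas the Lyapunov function supplies only $\sigma\in\K$; this is resolved by replacing $\sigma$ with the dominating function $r\mapsto\sigma(r)+r\in\Kinf$. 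I expect the main obstacle to be the rigorous justification of the integration step, namely the passage from the pointwise Dini-derivative inequality to the integral inequality via the monotonicity-from-Dini-derivative argument, whereas the cocycle and shift-invariance manipulation in the first step is routine.
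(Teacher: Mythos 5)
Your proof is correct, and it follows the same skeleton as the paper's: identify $D^+W(t)$ with $\dot V_{u(t+\cdot)}(\phi(t,x,u))$ via the cocycle property, apply the dissipation inequality \eqref{DissipationIneq_nc}, and integrate using the classical fact (Szarski's theorem, the paper's reference [Sza65, Theorem 2.1]) that a continuous function with nonpositive upper Dini derivative is nonincreasing. The genuine difference lies in how the input term is treated, and it makes your argument noticeably more elementary. You majorize $\sigma(\|u(t+\cdot)\|_{\Uc})\leq\sigma(\|u\|_{\Uc})$ \emph{before} integrating, so the correction term $t\,\sigma(\|u\|_{\Uc})$ is smooth and the Dini calculus is trivial. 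The paper instead retains the time-varying term $b(t):=\sigma(\|u(t+\cdot)\|_{\Uc})$, which is only known to be nonincreasing and possibly discontinuous; handling it requires Lemma~\ref{lem:Integrals-of-monotone-functions} on Dini derivatives of integrals of monotone functions, a regularization by a shift $b(\cdot-r)$, and a passage to the limit $r\to 0$. What the paper's extra work buys is the stronger intermediate estimate \eqref{eq:old-int-to-int-ISS}, with $\int_0^t\sigma(\|u(s+\cdot)\|_{\Uc})\,ds$ in place of $t\,\sigma(\|u\|_{\Uc})$, which is explicitly used later (Section~\ref{sec:Remarks on the definition of the ISS Lyapunov function}) in the discussion of how close one can get to integral-to-integral ISS; your route forfeits that refinement but delivers the stated proposition with less machinery. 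A small point in your favor: you noticed and repaired the bookkeeping mismatch that the Lyapunov definition supplies only $\sigma\in\K$ while Definition~\ref{def:norm-i_ISS} asks for $\sigma\in\Kinf$ (replacing $\sigma(r)$ by $\sigma(r)+r$), a detail the paper's proof passes over silently.
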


\begin{proof}
Assume that $V$ is a non-coercive ISS Lyapunov function for $\Sigma$ with
the corresponding $\psi_2,\alpha,\sigma$.
Pick any $u\in\Uc$ and any $x\in X$. As we assume forward completeness of $\Sigma$, the trajectory $\phi(\cdot,x,u)$ exists for all $t\geq0$ and due to \eqref{DissipationIneq_nc}, we have for any $t>0$ that:
\begin{equation}
\label{DissipationIneq_nc-specified}
\dot{V}_{u(t+\cdot)}\big(\phi(t,x,u)\big) \leq -\alpha(\|\phi(t,x,u)\|_X)  + \sigma(\|u(t+\cdot)\|_{\Uc}).
\end{equation}
By definition of $\dot{V}$, and using the cocycle property for $\Sigma$, we have that
\begin{align*}
\dot{V}_{u(t+\cdot)}\big(\phi(t,x,u)\big)
&=\mathop{\overline{\lim}} \limits_{h \rightarrow +0} {\frac{1}{h}\Big(V\big(\phi(h,\phi(t,x,u),u(t+\cdot))\big)-V\big(\phi(t,x,u)\big)\Big) }\\
&=\mathop{\overline{\lim}} \limits_{h \rightarrow +0} {\frac{1}{h}\Big(V\big(\phi(t+h,x,u)\big)-V\big(\phi(t,x,u)\big)\Big) }.
\end{align*}
Defining $y(t):=V\big(\phi(t,x,u)\big)$, we see that 
\begin{eqnarray}
\dot{V}_{u(t+\cdot)}\big(\phi(t,x,u)\big) = D^+ y(t),
\label{eq:Lie-der-and-Dini-derivative}
\end{eqnarray}
and $y(0)= V(x)$ due to the identity axiom of the system $\Sigma$.

In view of the continuity axiom of $\Sigma$, for fixed $x,u$ the map $\phi(\cdot,x,u)$ is continuous, and thus $t\mapsto -\alpha(\|\phi(t,x,u)\|_X)$ is continuous as well.

For $t\geq 0$, define $G(t):=\int_0^t \alpha(\|\phi(s,x,u)\|_X) ds$ and
$b(t):=\sigma(\|u(t+\cdot)\|_{\Uc})$.
Note that by the axiom of shift invariance, $b$ is non-increasing.
As $G$ is continuously differentiable, we can rewrite the inequality \eqref{DissipationIneq_nc-specified} as
\begin{eqnarray}
D^+y(t) \leq - \frac{d}{dt}G(t) + b(t). 
\label{eq:Shorthand-notation-dissip-ineq}
\end{eqnarray}
Pick any $r>0$ and define $b(s)=b(0)$ for $s \in [-r,0]$. As $b$ is a
nonincreasing function on $[-r,\infty)$, it holds for any $t\geq 0$ that $b(t) \leq
\lim_{h\to +0}b(t-r+h)$, and by the final inequality in
Lemma~\ref{lem:Integrals-of-monotone-functions} applied to $b(\cdot - r)$,
we obtain for all $t\ge 0$
\[
b(t) \leq D_+ \int_0^{t} b(s-r) ds = - D^+ \Big(-\int_0^{t} b(s-r) ds\Big).
\]
Thus, \eqref{eq:Shorthand-notation-dissip-ineq} implies that 
\begin{eqnarray}
D^+y(t) + \frac{d}{dt}G(t) + D^+ \Big(-\int_0^{t} b(s-r) ds\Big) \leq 0.
\label{eq:Shorthand-notation-dissip-ineq-2}
\end{eqnarray}
Due to 
\[
D^+(f_1(t)+f_2(t))\leq D^+(f_1(t)) + D^+(f_2(t)),
\]
which holds for any functions $f_1, f_2$ on the real line, this implies that 
\begin{eqnarray*}
D^+\Big(y(t) + G(t) -\int_0^{t} b(s-r) ds\Big) \leq 0.
\end{eqnarray*}
It follows from 
\cite[Theorem 2.1]{Sza65} that $t\mapsto y(t) + G(t) -\int_0^{t} b(s-r)
ds$ is nonincreasing. As $G(0) = 0$, it follows that for all $r>0$ and all $t\ge 0$
\begin{eqnarray*}
y(t) + G(t) -\int_0^{t} b(s-r) ds \leq y(0) = V(x). 
\end{eqnarray*}
As $b$ is bounded, we may pass to the limit $r \to 0$ and obtain
\begin{eqnarray*}
y(t) + G(t) -\int_0^{t} b(s) ds \leq y(0) = V(x). 
\end{eqnarray*}
Now $y(t)\geq 0$ for all $t\in\R_+$, and so for all $t\ge 0$, $x \in X$, and $u\in\Uc$
\begin{align}
\label{eq:old-int-to-int-ISS}
\int_0^t \alpha(\|\phi(s,x,u)\|_X) ds 
&\leq \psi_2(\|x\|_X) + \int_0^{t} \sigma(\|u(s+\cdot)\|_{\Uc}) ds\\
&\leq \psi_2(\|x\|_X) + t \sigma(\|u\|_{\Uc}).\nonumber
\end{align}
This completes the proof.
\end{proof}

We can now state our main result on noncoercive ISS Lyapunov functions.

\begin{theorem}{}
    \label{t:ISSLyapunovtheorem}
Let $\Sigma=(X,\Uc,\phi)$ be a forward complete control system with the input space 
$\Uc:=L^\infty(\R_+,U)$, which is CEP and BRS.    
If there exists a (noncoercive) ISS Lyapunov function for $\Sigma$, then $\Sigma$ is ISS.    
\end{theorem}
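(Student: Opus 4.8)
The plan is to chain together the two results already proved in this section and the preceding one, so that this final statement becomes a pure assembly. First I would invoke Proposition~\ref{prop:ncLF_implies_norm-to-integral-ISS}: since $\Sigma$ is forward complete and, by hypothesis, admits a noncoercive ISS Lyapunov function, that proposition immediately yields that $\Sigma$ is norm-to-integral ISS. This is the only place where the Lyapunov function itself is used. All of the genuine analytic work sits inside that proposition, where the dissipation inequality \eqref{DissipationIneq_nc} is transported along trajectories via the cocycle property and then integrated with the help of the monotone-function Dini-derivative estimate of Lemma~\ref{lem:Integrals-of-monotone-functions}, producing precisely the integral bound \eqref{eq:ni-ISS}.

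Next I would observe that by assumption $\Sigma$ already enjoys the CEP and BRS properties. Combined with the norm-to-integral ISS just established, this puts $\Sigma$ exactly into the hypotheses of Theorem~\ref{thm:ncISS_LF_sufficient_condition_NEW}. Applying the \q{$\Leftarrow$} direction of that theorem then delivers ISS of $\Sigma$ and closes the argument.

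I do not expect a substantial obstacle in this concluding step, since the technical burden has been front-loaded into the supporting results. The real difficulty was already dispatched in Proposition~\ref{prop:ncISS_plus_CEP_implies_ULS}, whose proof uses the CEP property, the cocycle identity, and a careful time-partitioning argument (tracking the sublevel crossing times $t^{1}_k$ and $t^{2}_k$) to upgrade norm-to-integral ISS together with CEP into the uniform local stability needed to invoke the characterization in Theorem~\ref{thm:UAG_equals_ULIM_plus_LS}. I would also note that the restriction to $\Uc:=L^\infty(\R_+,U)$ is immaterial for this particular deduction, as both Proposition~\ref{prop:ncLF_implies_norm-to-integral-ISS} and Theorem~\ref{thm:ncISS_LF_sufficient_condition_NEW} are stated for arbitrary forward complete control systems.
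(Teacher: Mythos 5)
Your proposal is correct and coincides exactly with the paper's own proof, which likewise consists of invoking Proposition~\ref{prop:ncLF_implies_norm-to-integral-ISS} to get norm-to-integral ISS and then applying Theorem~\ref{thm:ncISS_LF_sufficient_condition_NEW} via the CEP and BRS hypotheses. Your side remark that the restriction to $\Uc=L^\infty(\R_+,U)$ plays no role in this deduction is also accurate, as both cited results hold for general input spaces satisfying the axioms.
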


\begin{proof}
By Proposition~\ref{prop:ncLF_implies_norm-to-integral-ISS}, $\Sigma$ is is norm-to-integral ISS. The application of Theorem~\ref{thm:ncISS_LF_sufficient_condition_NEW} finishes the proof.
\end{proof}

\subsection{Remarks on the definition of the ISS Lyapunov function}
\label{sec:Remarks on the definition of the ISS Lyapunov function}

In this section, we discuss the definition of the ISS Lyapunov function, which we adopted in this paper.

For any $u\in\Uc$ and any $\tau\geq 0$, define
$
u_\tau(s):=
\begin{cases}
u(s) &, \text{ if } s\in[0,\tau], \\ 
0 &, \text{ if } s> \tau.
\end{cases}
$

We start with a restatement of the ISS Lyapunov function concept.
\begin{lemma}
\label{lem:noncoercive_ISS_LF-inf-restatement}
Assume that for any $u\in\Uc$ it holds that $\inf_{\tau > 0}\|u_\tau\|_\Uc \leq \|u\|_\Uc$.

Then a continuous function $V:X \to \R_+$ is a \emph{non-coercive ISS Lyapunov function} for the system $\Sigma = (X,\Uc,\phi)$,  if and only if there exist $\psi_2,\alpha \in \Kinf$ and $\sigma \in \K$ such that \eqref{LyapFunk_1Eig_nc_ISS} holds
and the Dini derivative of $V$ along the trajectories of $\Sigma$ for all $x \in X$ and $u\in \Uc$ satisfies
\begin{equation}
\label{DissipationIneq_nc-inf}
\dot{V}_u(x) \leq -\alpha(\|x\|_X)  + \sigma(\inf_{\tau > 0}\|u_\tau\|_{\Uc}).
\end{equation}

If additionally there is $\psi_1\in\Kinf$ so that \eqref{LyapFunk_1Eig_LISS} holds, 
then $V$ is a \emph{coercive ISS Lyapunov function} for $\Sigma$.
\end{lemma}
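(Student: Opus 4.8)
The plan is to exploit causality together with the observation that the Dini derivative $\dot V_u(x)$ depends only on the germ of the input at time $0$. Concretely, recall from Definition~\ref{Steurungssystem} that causality gives $\phi(t,x,u)=\phi(t,x,\tilde u)$ whenever $u|_{[0,t]}=\tilde u|_{[0,t]}$. Since $u_\tau$ coincides with $u$ on $[0,\tau]$, we obtain $\phi(t,x,u)=\phi(t,x,u_\tau)$ for every $t\le\tau$. Hence the difference quotients defining $\dot V_u(x)$ and $\dot V_{u_\tau}(x)$ agree for all sufficiently small $t>0$, so that
\[
\dot V_u(x)=\dot V_{u_\tau}(x)\qquad\text{for every }\tau>0 .
\]
This identity is the key step; everything else is comparison-function bookkeeping.

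For the implication from Definition~\ref{def:noncoercive_ISS_LF} to \eqref{DissipationIneq_nc-inf}, I would apply the standard dissipation inequality \eqref{DissipationIneq_nc} to the input $u_\tau$ and use the identity above to rewrite the left-hand side, obtaining $\dot V_u(x)\le-\alpha(\|x\|_X)+\sigma(\|u_\tau\|_\Uc)$ for every $\tau>0$. Since the left-hand side is independent of $\tau$, I pass to the infimum over $\tau>0$ on the right. Because $\sigma\in\K$ is continuous and nondecreasing, one has $\inf_{\tau>0}\sigma(\|u_\tau\|_\Uc)=\sigma(\inf_{\tau>0}\|u_\tau\|_\Uc)$, which yields \eqref{DissipationIneq_nc-inf} with the same $\psi_2,\alpha,\sigma$. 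Note that this direction does not use the standing hypothesis.

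For the converse, starting from \eqref{DissipationIneq_nc-inf} I would simply invoke the hypothesis $\inf_{\tau>0}\|u_\tau\|_\Uc\le\|u\|_\Uc$ and monotonicity of $\sigma$ to estimate $\sigma(\inf_{\tau>0}\|u_\tau\|_\Uc)\le\sigma(\|u\|_\Uc)$, thereby recovering \eqref{DissipationIneq_nc} with the same comparison functions. This is the only place where the assumption is needed. Finally, the coercive statement is immediate: the additional two-sided bound \eqref{LyapFunk_1Eig_LISS} is untouched by the preceding argument, which concerns only the dissipation inequality, so appending $\psi_1$ turns the equivalence for non-coercive ISS Lyapunov functions into the corresponding equivalence for coercive ones verbatim.

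I do not anticipate a genuine obstacle beyond making the causality identity precise. The only mildly delicate point is the interchange $\inf_{\tau>0}\sigma(\|u_\tau\|_\Uc)=\sigma(\inf_{\tau>0}\|u_\tau\|_\Uc)$ in the forward direction, but this follows at once from $\sigma$ being continuous and nondecreasing: monotonicity gives the inequality \q{$\ge$}, and continuity applied to a minimizing sequence gives \q{$\le$}.
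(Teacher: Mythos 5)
Your proposal is correct and follows essentially the same route as the paper: both hinge on the causality identity $\dot V_u(x)=\dot V_{u_\tau}(x)$, applying \eqref{DissipationIneq_nc} to the truncated input $u_\tau$ and taking the infimum over $\tau>0$ (your explicit justification of the interchange $\inf_{\tau>0}\sigma(\|u_\tau\|_\Uc)=\sigma(\inf_{\tau>0}\|u_\tau\|_\Uc)$ is a point the paper glosses over), while the converse direction uses only the standing hypothesis and monotonicity of $\sigma$. The one detail the paper includes that you omit is the verification that $u_\tau\in\Uc$ in the first place --- it is the concatenation of $u$ with $0\in\Uc$ at time $\tau$, so the axiom of concatenation applies --- which your argument needs before $\dot V_{u_\tau}(x)$ and the dissipation inequality for $u_\tau$ are even meaningful.
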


\begin{proof}
$\Leftarrow$. Follows from the assumption that $\inf_{\tau > 0}\|u_\tau\|_\Uc \leq \|u\|_\Uc$ for all $u\in\Uc$.

$\Rightarrow$. Let a continuous function $V:X \to \R_+$ be a non-coercive ISS Lyapunov function.
Pick any $x \in X$ and any $u\in\Uc$.
Since $\Uc$ is a linear space, $0\in\Uc$ and the axiom of concatenation implies that $u_\tau \in \Uc$ for any $\tau > 0$.
By definition of $\dot{V}_u$, for any $\tau>0$ it holds that $\dot{V}_u(x) = \dot{V}_{u_\tau}(x)$, and thus
\begin{equation*}
\dot{V}_u(x) = \dot{V}_{u_\tau}(x) \leq -\alpha(\|x\|_X)  + \sigma(\|u_\tau\|_{\Uc}).
\end{equation*}
Taking infimum over $\tau>0$, we obtain \eqref{DissipationIneq_nc-inf}.
\end{proof}

Our definition of an ISS Lyapunov function is defined for any normed linear space $\Uc$, that allows to develop the ISS Lyapunov theory for a very broad class of systems. However, for some input spaces this definition is far too restrictive, and for other systems simpler and more useful restatements of the ISS Lyapunov function concept may be more useful.
The most important input spaces we consider next.

\begin{remark}[$L^p$ input spaces]
\label{rem:Lyapunov-functions-for-Lp-spaces} 
For spaces $\Uc=L^p(\R_+,U)$, for a Banach space $U$ and some $p\in[1,+\infty)$, Definition~\ref{def:noncoercive_ISS_LF}
 is far too restrictive.
Indeed, for any $u\in\Uc = L^p(\R_+,U)$ it holds that 
$\inf_{\tau\ge 0}\|u_\tau\|_{\Uc} = 0$, and thus the inequality \eqref{DissipationIneq_nc-inf}  reduces to
\begin{equation}
\label{DissipationIneq_nc-Lp}
\dot{V}_u(x) \leq -\alpha(\|x\|_X),
\end{equation}
which ensures a much stronger property than ISS, namely the uniform global asymptotic stability (for a precise definition see, e.g., \cite{MiW19a}).

Coercive and non-coercive ISS Lyapunov theory which is appropriate for systems with $\Uc=L^p(\R_+,U)$, $p\in[1,+\infty)$, has been developed in \cite{Mir20}.
\end{remark}

\begin{remark}[Piecewise continuous input functions]
If $\Uc=PC_b(\R_+,U)$, for a Banach space $U$, then the dissipation inequality \eqref{DissipationIneq_nc-inf} simplifies to 
\begin{equation}
\label{DissipationIneq_nc-PC_b}
\dot{V}_u(x) \leq -\alpha(\|x\|_X)  + \sigma(\|u(0)\|_{U}),
\end{equation}
which resembles the classical dissipation inequality, used in the ISS theory of ODE systems.
Similarly to Proposition~\ref{prop:ncLF_implies_norm-to-integral-ISS}, one can show that 
for a forward-complete system $\Sigma=(X, PC_b(\R_+,U), \phi)$, the existence of a non-coercive ISS Lyapunov function implies integral-to-integral ISS.
\end{remark}

In the rest of this section, let $\Uc:=L^\infty(\R_+,U)$. In this case our definition of ISS Lyapunov function seems to be the most natural. 
For forward complete finite-dimensional systems with $f$ as in the statement of Corollary~\ref{cor:ni-ISS-and-ISS-ODEs}, existence of a non-coercive ISS Lyapunov function implies not only norm-to-integral ISS, but also integral-to-integral ISS, which follows from Proposition~\ref{prop:ncLF_implies_norm-to-integral-ISS} and 
 Corollary~\ref{cor:ni-ISS-and-ISS-ODEs}.
However, for infinite-dimensional systems we were able to show only somewhat weaker property (which is still stronger than norm-to-integral ISS, but weaker than integral-to-integral ISS), see \eqref{eq:old-int-to-int-ISS}. Thus, a question remains whether existence of an ISS Lyapunov function (coercive or non-coercive) as defined in Definition~\ref{def:noncoercive_ISS_LF} implies integral-to-integral ISS for forward complete systems.
Although we do not have an answer to this problem, in the following proposition we show that if an ISS Lyapunov function satisfies a somewhat stronger dissipative estimate, then integral-to-integral ISS can be verified.

\begin{proposition}
\label{prop:ncLF_implies_ii_ISS} 
Let $\Sigma=(X,\Uc,\phi)$ be a forward complete control system with the input space $\Uc:=L^\infty(\R_+,U)$. 
Assume that there is a continuous function $V:X \to \R_+$, $\psi_2,\alpha \in \Kinf$ and $\sigma \in \K$ such that
\eqref{LyapFunk_1Eig_nc_ISS} holds and the Dini derivative of $V$ along the trajectories of $\Sigma$ for all $x \in X$ and $u\in \Uc$ satisfies
\begin{equation}
\label{DissipationIneq_nc-Linfty-stronger-form}
\dot{V}_u(x) \leq -\alpha(\|x\|_X)  + \Big[D_{+,\tau}\Big(\int_0^\tau\sigma(\|u(s)\|_{U})ds\Big)\Big]_{\tau=0},
\end{equation}
where $D_{+,\tau}$ means that lower right-hand Dini derivative is taken with respect to the argument $\tau$.

Then $\Sigma$ is integral-to-integral ISS.
\end{proposition}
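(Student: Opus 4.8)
The plan is to mirror the proof of Proposition~\ref{prop:ncLF_implies_norm-to-integral-ISS}, but to exploit the sharper input term in \eqref{DissipationIneq_nc-Linfty-stronger-form} so that Lemma~\ref{lem:Integrals-of-monotone-functions} is no longer needed. Fix $x\in X$ and $u\in\Uc=L^\infty(\R_+,U)$, and set $y(t):=V(\phi(t,x,u))$, $G(t):=\int_0^t\alpha(\|\phi(s,x,u)\|_X)\,ds$ and $w(t):=\int_0^t\sigma(\|u(s)\|_U)\,ds$. Since $u\in L^\infty$ and $\sigma$ is continuous, the map $s\mapsto\sigma(\|u(s)\|_U)$ is measurable and essentially bounded by $\sigma(\|u\|_\Uc)$, so $w$ is well defined and Lipschitz; by the continuity axiom and continuity of $V$ and $\alpha$, the functions $y$ and $G$ are continuous, and $G\in C^1$ with $G'(t)=\alpha(\|\phi(t,x,u)\|_X)$.

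First I would evaluate the right-hand side of \eqref{DissipationIneq_nc-Linfty-stronger-form} along the trajectory. Applying the dissipation inequality at the state $\phi(t,x,u)$ with the shifted input $u(t+\cdot)\in\Uc$, and performing the substitution $r=t+s$ in $\int_0^\tau\sigma(\|u(t+s)\|_U)\,ds=w(t+\tau)-w(t)$, the input term becomes precisely $D_{+,\tau}\big[w(t+\tau)-w(t)\big]_{\tau=0}=D_+w(t)$, the lower right Dini derivative of $w$ at $t$. Exactly as in Proposition~\ref{prop:ncLF_implies_norm-to-integral-ISS}, the cocycle property together with \eqref{eq:Lie-der-and-Dini-derivative} gives $\dot V_{u(t+\cdot)}(\phi(t,x,u))=D^+y(t)$. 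Hence for every $t\geq 0$,
\[
D^+y(t)\leq -G'(t)+D_+w(t).
\]

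Next I would show that $z(t):=y(t)+G(t)-w(t)$ is nonincreasing. Using the identity \eqref{eq:Dini-lower-upper} in the form $-D_+w(t)=D^+(-w)(t)$, the equality $G'(t)=D^+G(t)$, and the subadditivity $D^+(f_1+f_2)\leq D^+f_1+D^+f_2$, I obtain
\[
D^+z(t)\leq D^+y(t)+G'(t)+D^+(-w)(t)=D^+y(t)+G'(t)-D_+w(t)\leq 0.
\]
Since $z$ is continuous, \cite[Theorem 2.1]{Sza65} then yields that $z$ is nonincreasing, so $z(t)\leq z(0)=V(x)$ because $G(0)=w(0)=0$ and $y(0)=V(x)$ by the identity axiom. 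As $y(t)\geq 0$, this gives $G(t)\leq V(x)+w(t)\leq\psi_2(\|x\|_X)+\int_0^t\sigma(\|u(s)\|_U)\,ds$, where I used \eqref{LyapFunk_1Eig_nc_ISS} and $V(0)=0$. Finally, if $\sigma\in\K\setminus\Kinf$ I would replace it by $\tilde\sigma(r):=\sigma(r)+r\in\Kinf$, which only enlarges the right-hand side; this yields integral-to-integral ISS in the sense of Definition~\ref{def:i-i_ISS} with $\psi:=\psi_2$.

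I expect the only genuinely delicate point to be the identification of the input term with $D_+w(t)$ and the verification that $D^+z\leq 0$ in spite of the mixture of upper and lower Dini derivatives; the identity \eqref{eq:Dini-lower-upper} resolves the latter cleanly, and it is precisely this that lets me dispense with Lemma~\ref{lem:Integrals-of-monotone-functions}, which was needed in Proposition~\ref{prop:ncLF_implies_norm-to-integral-ISS} because there the input term appeared as a function value rather than already as a derivative.
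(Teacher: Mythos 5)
Your proposal is correct and takes essentially the same route as the paper's own proof: rewrite the input term as the lower Dini derivative $D_+$ of $w(t)=\int_0^t\sigma(\|u(s)\|_U)\,ds$ via the shift substitution, identify $\dot V_{u(t+\cdot)}(\phi(t,x,u))$ with $D^+y(t)$ through the cocycle property, combine the terms using \eqref{eq:Dini-lower-upper} and subadditivity of $D^+$, and invoke \cite[Theorem 2.1]{Sza65} to conclude that $y+G-w$ is nonincreasing, whence the integral estimate follows. Your explicit remarks on the Lipschitz regularity of $w$ and on upgrading $\sigma\in\K$ to $\tilde\sigma\in\Kinf$ are minor refinements that the paper leaves implicit, not a different argument.
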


Before we prove this proposition, note that the estimate \eqref{DissipationIneq_nc-Linfty-stronger-form} implies \eqref{DissipationIneq_nc}, since for any $x \in X$ and $u\in \Uc$ it holds that 
\begin{eqnarray*}
\Big[D_{+,\tau}\Big(\int_0^\tau\sigma(\|u(s)\|_{U})ds\Big)\Big]_{\tau=0}
\leq
\Big[D_{+,\tau}\Big(\int_0^\tau\sigma(\|u\|_{\Uc})ds\Big)\Big]_{\tau=0}
= \sigma(\|u\|_{\Uc}).
\end{eqnarray*} 
Thus, function $V$ as in Proposition~\ref{prop:ncLF_implies_ii_ISS} is an ISS Lyapunov function for $\Sigma$, with a (potentially) stronger dissipative estimate.

\begin{proof}
Assume that $V$ is a non-coercive ISS Lyapunov function for $\Sigma$ with
the corresponding $\psi_2,\alpha,\sigma$.
Pick any $u\in\Uc$ and any $x\in X$. As we assume forward completeness of $\Sigma$, the trajectory $\phi(\cdot,x,u)$ exists for all $t\geq0$ and due to \eqref{DissipationIneq_nc}, we have for any $t>0$ that:
\begin{equation}
\label{DissipationIneq_nc-specified-i-to-i-ISS}
\dot{V}_{u(t+\cdot)}\big(\phi(t,x,u)\big) \leq -\alpha(\|\phi(t,x,u)\|_X) + \Big[D_{+,\tau}\Big(\int_0^\tau\sigma(\|u(t+s)\|_{U})ds\Big)\Big]_{\tau=0}.
\end{equation}
The last term can be rewritten in a simpler form:
\begin{align*}
\Big[D_{+,\tau}\Big(\int_0^\tau\sigma(\|u(t&+s)\|_{U})ds\Big)\Big]_{\tau=0}
=\Big[D_{+,\tau}\Big(\int_t^{t+\tau}\sigma(\|u(s)\|_{U})ds\Big)\Big]_{\tau=0}\\
&=\Big[D_{+,\tau}\Big(\int_0^{t+\tau}\sigma(\|u(s)\|_{U})ds\Big)\Big]_{\tau=0}
=D_{+,t}\Big(\int_0^{t}\sigma(\|u(s)\|_{U})ds\Big).
\end{align*}
Arguing as in Proposition~\ref{prop:ncLF_implies_norm-to-integral-ISS}, we define 
$y(t):=V\big(\phi(t,x,u)\big)$ and verify that the equality  
$\dot{V}_{u(t+\cdot)}\big(\phi(t,x,u)\big) = D^+ y(t)$ 
holds and $y(0)= V(x)$.

%

In view of the continuity axiom of $\Sigma$, for fixed $x,u$ the map $\phi(\cdot,x,u)$ is continuous, and thus $t\mapsto -\alpha(\|\phi(t,x,u)\|_X)$ is continuous as well. Hence, we can rewrite the inequality \eqref{DissipationIneq_nc-specified-i-to-i-ISS} as
\begin{eqnarray*}
D^+y(t) \leq - \frac{d}{dt}\int_0^t \alpha(\|\phi(s,x,u)\|_X) ds + D_+\Big(\int_0^t\sigma(\|u(s)\|_{U})ds\Big). 
\end{eqnarray*}
As $-D_+\big(\int_0^t\sigma(\|u(s)\|_{U})ds\big) = D^+\big(-\int_0^t\sigma(\|u(s)\|_{U})ds\big)$, we proceed to
\begin{eqnarray*}
D^+y(t) + \frac{d}{dt}\int_0^t \alpha(\|\phi(s,x,u)\|_X) ds + D^+ \Big(-\int_0^t\sigma(\|u(s)\|_{U})ds\Big) \leq 0.
\end{eqnarray*}
As in the proof of Proposition~\ref{prop:ncLF_implies_norm-to-integral-ISS}, by subadditivity of $D^+$, we obtain that
\begin{eqnarray*}
D^+\Big(y(t) + \int_0^t \alpha(\|\phi(s,x,u)\|_X) ds -\int_0^t\sigma(\|u(s)\|_{U})ds\Big) \leq 0.
\end{eqnarray*}
It follows from 
\cite[Theorem 2.1]{Sza65} that $t\mapsto y(t) + \int_0^t \alpha(\|\phi(s,x,u)\|_X) ds -\int_0^t\sigma(\|u(s)\|_{U})ds$ 
is nonincreasing. As $G(0) = 0$, for all $t\ge0$ we have
\begin{eqnarray*}
y(t) + G(t) -\int_0^t\sigma(\|u(s)\|_{U})ds \leq y(0) = V(x)\leq  \psi_2(\|x\|_X). 
\end{eqnarray*}
Now $y(t)\geq 0$ for all $t\in\R_+$, and so
\begin{align*}
\int_0^t \alpha(\|\phi(s,x,u)\|_X) ds 
\leq \psi_2(\|x\|_X) + \int_0^t\sigma(\|u(s)\|_{U})ds.
\end{align*}
This completes the proof.
\end{proof}

\section{Construction of ISS Lyapunov functions for linear systems with unbounded input operators}
\label{sec:Lyapunov_Theorem_systems_unbounded_operators}

In the remainder of this paper, we specialize to the case of complex Hilbert spaces $X$ endowed with the scalar 
product $\scalp{\cdot}{\cdot}_X$ and to the input spaces
$\Uc:=L^\infty(\R_+,U)$, where $U$ is a Banach space. 
We denote the norm in $\Uc$ by $\|\cdot\|_\infty$.

A classical method for construction of Lyapunov functions for
exponentially stable semigroups is the solution of the operator Lyapunov
equation \cite[Theorem 5.1.3]{CuZ95}. This method can also be used for the construction of ISS Lyapunov functions for systems with bounded input operators \cite{MiW17c}.

The following result holds (this is a Hilbert-space version of \cite[Proposition 6]{MiW17c}, and thus we omit the proof):
\begin{proposition}
\label{prop:Converse_ISS_Lyapunov_theorem_lin_Systems_with_bounded_input_operators} 
Let $X$ be a complex Hilbert space, $\Uc:=L^\infty(\R_+,U)$, where $U$ is a Banach space, $A$ generate a strongly continuous semigroup over $X$ and let $B\in L(U,X)$.
If \eqref{InfiniteDim2} is ISS, then there is an operator $P=P^* \in L(X)$ so that $\scalp{Px}{x}_X>0$ for $x\neq 0$
and $P$ solves the Lyapunov equation
\begin{eqnarray}
\scalp{Px}{Ax}_X +\scalp{Ax}{Px}_X = -\|x\|^2_X,\quad x\in D(A).
\label{eq:Gn_Lyap_Equation}
\end{eqnarray}
Furthermore, $V:X\to\R_+$ defined by
\begin{align}
V(x) =\scalp{Px}{x}_X
\label{eq:QuadraticLF}
\end{align}
is a non-coercive  ISS Lyapunov function for \eqref{InfiniteDim2}.
\end{proposition}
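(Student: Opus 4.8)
The plan is to exploit that for a bounded input operator $B\in L(U,X)$ the ISS of \eqref{InfiniteDim2} forces exponential stability of the semigroup $(T(t))_{t\ge 0}$ generated by $A$ (as recorded in Example~\ref{exam2b}), so that $\|T(t)\|\le Me^{-\omega t}$ for some $M\ge 1$, $\omega>0$, and then to build $P$ by the classical Lyapunov--operator integral. Concretely I would let $P\in L(X)$ be the bounded operator representing the nonnegative sesquilinear form $(x,y)\mapsto\int_0^\infty\scalp{T(t)x}{T(t)y}_X\,dt$, so that
\[
\scalp{Px}{x}_X=\int_0^\infty \|T(t)x\|_X^2\,dt .
\]
The integral converges because $\|T(t)x\|_X^2\le M^2e^{-2\omega t}\|x\|_X^2$, which also yields $\|P\|\le M^2/(2\omega)$. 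Self-adjointness of $P$ is inherited from that of each integrand $T(t)^*T(t)$, and since $T(0)=\Id$ and $t\mapsto\|T(t)x\|_X$ is continuous, the integrand is strictly positive near $t=0$ whenever $x\ne 0$, giving $\scalp{Px}{x}_X>0$ for $x\ne 0$. The upper bound immediately gives estimate \eqref{LyapFunk_1Eig_nc_ISS} with $\psi_2(r):=\tfrac{M^2}{2\omega}r^2\in\Kinf$; note that no matching lower bound $c\|x\|_X^2$ is available in general, which is precisely why $V$ is only non-coercive.

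For the Lyapunov equation I would use the semigroup law to write, for every $x\in X$,
\[
\scalp{PT(t)x}{T(t)x}_X=\int_0^\infty\|T(s+t)x\|_X^2\,ds=\int_t^\infty\|T(\tau)x\|_X^2\,d\tau,
\]
which is continuously differentiable in $t$ with derivative $-\|T(t)x\|_X^2$; evaluating at $t=0$ gives $-\|x\|_X^2$. On the other hand, for $x\in D(A)$ the orbit $t\mapsto T(t)x$ is differentiable with derivative $AT(t)x$, so differentiating the bounded quadratic form $t\mapsto\scalp{PT(t)x}{T(t)x}_X$ and evaluating at $t=0$ produces $\scalp{PAx}{x}_X+\scalp{Px}{Ax}_X$. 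Equating the two expressions and using $P=P^*$ yields exactly \eqref{eq:Gn_Lyap_Equation}.

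It remains to verify the dissipation inequality \eqref{DissipationIneq_nc} for $V(x)=\scalp{Px}{x}_X$, which is continuous since $P\in L(X)$. Writing the mild solution as $\phi(t,x,u)=T(t)x+w(t)$ with $w(t):=\int_0^t T(t-s)Bu(s)\,ds$, I would expand
\[
V(\phi(t,x,u))=\scalp{PT(t)x}{T(t)x}_X+2\re\scalp{PT(t)x}{w(t)}_X+\scalp{Pw(t)}{w(t)}_X
\]
and examine $\tfrac1t\big(V(\phi(t,x,u))-V(x)\big)$ as $t\to+0$. The first summand contributes $-\|x\|_X^2$ by the computation above; since boundedness of $B$ gives $\|w(t)\|_X\le \tilde M\|B\|\,\|u\|_\Uc\,t$ on $[0,1]$ with $\tilde M:=\sup_{r\in[0,1]}\|T(r)\|$, the third summand is of order $t^2$ and drops out, while the cross term satisfies $\mathop{\overline{\lim}}_{t\to+0}\tfrac2t\big|\scalp{PT(t)x}{w(t)}_X\big|\le 2\|P\|\,\tilde M\|B\|\,\|x\|_X\|u\|_\Uc$. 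Collecting these and applying Young's inequality to the cross term gives
\[
\dot{V}_u(x)\le -\|x\|_X^2+2\|P\|\tilde M\|B\|\,\|x\|_X\|u\|_\Uc\le -\tfrac12\|x\|_X^2+2\big(\|P\|\tilde M\|B\|\big)^2\|u\|_\Uc^2,
\]
so \eqref{DissipationIneq_nc} holds with $\alpha(r):=\tfrac12 r^2\in\Kinf$ and $\sigma(r):=2(\|P\|\tilde M\|B\|)^2r^2\in\K$, and $V$ is a non-coercive ISS Lyapunov function. I expect the only delicate point to be this last step: because $u\in L^\infty$ may be discontinuous at the origin, one cannot pass to a genuine derivative of the cross term and must instead work with the upper bound afforded by the $\limsup$-definition of the Dini derivative, which is exactly what the estimate on $\|w(t)\|_X$ supplies.
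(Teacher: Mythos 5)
Your proof is correct, and it is essentially the argument the paper has in mind: the paper omits its own proof of Proposition~\ref{prop:Converse_ISS_Lyapunov_theorem_lin_Systems_with_bounded_input_operators}, deferring to \cite[Theorem 5.1.3]{CuZ95} and \cite[Proposition 6]{MiW17c}, which is precisely your Datko-type construction $\scalp{Px}{x}_X=\int_0^\infty\|T(t)x\|_X^2\,dt$ (giving $P=P^*$, positivity, the bound $\psi_2(r)=\tfrac{M^2}{2\omega}r^2$, and the Lyapunov equation by differentiating along the semigroup) followed by the expansion of the mild solution into semigroup, cross, and quadratic input terms. The same expansion-plus-Young's-inequality scheme is exactly what the paper carries out in detail, for the harder case of merely admissible $B$, in the proof of Theorem~\ref{thm:Gen_ISS_LF_Construction}, so your argument is the bounded-$B$ specialization of that approach.
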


\begin{remark}
\label{rem:Sense-of-Proposition-nc-LF} 
It is well-known that the existence of a positive solution to the Lyapunov equation \eqref{eq:Gn_Lyap_Equation}
is equivalent to the exponential stability of the semigroup generated by $A$, see \cite[Theorem 5.1.3]{CuZ95}, and thus to the ISS of the system \eqref{InfiniteDim2} (for admissible $B$). The main contribution of Proposition~\ref{prop:Converse_ISS_Lyapunov_theorem_lin_Systems_with_bounded_input_operators} is that the quadratic function $V$, defined by \eqref{eq:QuadraticLF}, is also a non-coercive ISS Lyapunov function  for \eqref{InfiniteDim2}.
\end{remark}

Note that the operator $P$ and the Lyapunov function $V$ can be chosen independently on the bounded input operator $B$ (the function $\sigma$ in the dissipative estimate \eqref{DissipationIneq_nc} however does depend on $B$). In the next theorem we derive a counterpart of 
Proposition~\ref{prop:Converse_ISS_Lyapunov_theorem_lin_Systems_with_bounded_input_operators} 
for systems with merely admissible operators $B$.
In contrast to systems with bounded input operators, we need further assumptions that relate the operators $P$ and $A$.

\begin{theorem}
\label{thm:Gen_ISS_LF_Construction}
Let $A$ be  the generator of a $C_0$-semigroup $(T(t))_{t\ge 0}$ on a complex Hilbert space $X$ 
 and let $\Uc:=L^\infty(\R_+,U)$, where $U$ is a Banach space.

Assume that there is an operator $P\in L(X)$ satisfying the following conditions:
\begin{itemize}
	\item[(i)] \label{item:Gen_Converse_Lyap_Theorem_1} $P$ satisfies
\begin{eqnarray}
\re\scalp{Px}{x}_X > 0,\quad x\in X\backslash\{0\}.
\label{eq:Positivity}
\end{eqnarray}
\item[(ii)] \label{item:Gen_Converse_Lyap_Theorem_3}  $\im(P) \subset D(A^*)$.
\item[(iii)] \label{item:Gen_Converse_Lyap_Theorem_4} $PA$ has an
  extension to a bounded operator on $X$, that is, $PA\in L(X)$. (We
  also denote this
  extension by $PA$.)
\item[(iv)] \label{item:Gen_Converse_Lyap_Theorem_2} $P$ satisfies the Lyapunov inequality 
\begin{eqnarray}
\re\scalp{(PA+A^*P)x}{x}_X \leq  -\scalp{x}{x}_X,\quad x\in D(A),
\label{eq:LyapIneq}
\end{eqnarray}
\end{itemize}
Then for any $\infty$-admissible input operator $B\in L(U,X_{-1})$ the function
\begin{equation}
\label{Lyap}
 V(x) := \re\, \langle Px,x\rangle_X
 \end{equation}
is a non-coercive ISS Lyapunov function for \eqref{InfiniteDim2}, which
satisfies for each $\eps>0$ the dissipation inequality
\begin{align}
\label{ex:Dissipative_Inequality_linear_system}
 \dot{V}_u(x_0) \le  (\varepsilon-1) \|x_0\|_X^2 + c(\eps) \|u\|^2_\infty,
\end{align}
where 
\begin{align}
\label{eq:ceps-def}
c(\eps):=&\ \frac{1}{4\varepsilon}\big(\|A^\ast P\|_{L(X)} +\|PA\|_{L(X)}\big)^2 \|A^{-1}_{-1}B\|_{L(U,X)}^2M^2\\
&\qquad\qquad\qquad+ M\|A^\ast P\|_{L(X)} \| A_{-1}^{-1}B\|_{L(U,X)}
\kappa(0).
\nonumber
\end{align}
and
$\kappa(0)=\lim_{t \searrow 0}\kappa(t)$, where  $\kappa(t)>0$ is the smallest constant satisfying
\begin{equation}\label{kappa} \left\|\int_0^t T_{-1}(t-s)Bu(s)\, ds\right\|_X \le \kappa(t) \|u\|_{\infty}, \end{equation}
for every $u\in L^\infty([0,t),U)$. (The existence of the constants $\kappa(t)$ is implied by the $\infty$-admissibility of $B$.)

In particular, existence of a non-coercive ISS Lyapunov function \eqref{Lyap} implies that \eqref{InfiniteDim2} is ISS for any $\infty$-admissible $B$.
\end{theorem}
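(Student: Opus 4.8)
The plan is to show that $V(x)=\re\langle Px,x\rangle_X$ meets all requirements of Definition~\ref{def:noncoercive_ISS_LF} — in particular the dissipation inequality \eqref{ex:Dissipative_Inequality_linear_system} — and then to invoke Theorem~\ref{t:ISSLyapunovtheorem}. The bounds \eqref{LyapFunk_1Eig_nc_ISS} are immediate: $V$ is a continuous quadratic form with $0\le V(x)\le\|P\|_{L(X)}\|x\|_X^2=:\psi_2(\|x\|_X)$, and $V(x)>0$ for $x\neq 0$ by \eqref{eq:Positivity}. All the work is in estimating the Dini derivative along the mild solution $x(t)=\phi(t,x_0,u)=T(t)x_0+z(t)$, where $z(t):=\int_0^t T_{-1}(t-s)Bu(s)\,ds$. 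I would split
\[
V(x(t))-V(x_0)=\big(V(T(t)x_0)-V(x_0)\big)+\re\langle PT(t)x_0,z(t)\rangle_X+\re\langle Pz(t),T(t)x_0\rangle_X+\re\langle Pz(t),z(t)\rangle_X
\]
and estimate the limes superior of each term divided by $t$ separately.

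For the homogeneous term I would use the Lyapunov inequality \eqref{eq:LyapIneq}. For $x_0\in D(A)$, conditions (ii)--(iii) make $t\mapsto\re\langle PT(t)x_0,T(t)x_0\rangle_X$ differentiable with derivative $\re\langle(PA+A^*P)T(t)x_0,T(t)x_0\rangle_X\le-\|T(t)x_0\|_X^2$, so $V(T(t)x_0)-V(x_0)\le-\int_0^t\|T(s)x_0\|_X^2\,ds$; density of $D(A)$ and continuity extend this to all $x_0\in X$, whence $\limsup_{t\to+0}\tfrac1t(V(T(t)x_0)-V(x_0))\le-\|x_0\|_X^2$. The same inequality gives $\int_0^\infty\|T(s)x_0\|_X^2\,ds\le V(x_0)\le\|P\|_{L(X)}\|x_0\|_X^2$, so by a Datko-type argument $(T(t))_{t\ge0}$ is exponentially stable and $0\in\rho(A)$, which legitimizes the use of $A_{-1}^{-1}$.

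The crux is the input contribution, and here the naive bound $\|z(t)\|_X\le\kappa(t)\|u\|_\infty$ is useless in the cross terms, because $\kappa(t)/t$ need not stay bounded when $B$ is unbounded. The key idea — which is exactly what conditions (ii)--(iv) are designed to enable — is to regularize: since $0\in\rho(A)$, the function $\zeta(t):=A_{-1}^{-1}z(t)=\int_0^t T(t-s)A_{-1}^{-1}Bu(s)\,ds$ is a genuine $X$-valued convolution with $\|\zeta(t)\|_X\le Mt\,\|A_{-1}^{-1}B\|_{L(U,X)}\|u\|_\infty$, and $z(t)=A_{-1}\zeta(t)$. The unbounded $A_{-1}$ can then be moved onto $P$: using $PT(t)x_0,\,Pz(t)\in D(A^*)$ (condition (ii)), the $X_{-1}$--$D(A^*)$ duality, and the identity $\overline{PA}\,A_{-1}^{-1}=P$ on $X$ (which follows from condition (iii) together with $A_{-1}^{-1}|_X=A^{-1}$), one obtains $\langle PT(t)x_0,z(t)\rangle_X=\langle A^*PT(t)x_0,\zeta(t)\rangle_X$, $Pz(t)=\overline{PA}\,\zeta(t)$, and $\langle Pz(t),z(t)\rangle_X=\langle A^*Pz(t),\zeta(t)\rangle_X$. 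Consequently both cross terms are $O(t\,\|x_0\|_X\|u\|_\infty)$, bounded via $\|A^*P\|_{L(X)}$ and $\|PA\|_{L(X)}$ times $\|A_{-1}^{-1}B\|_{L(U,X)}$, while the purely input term is bounded using $\|z(t)\|_X\le\kappa(t)\|u\|_\infty$ and $\kappa(t)\to\kappa(0)$, producing the term involving $\kappa(0)$ in the limit.

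Collecting the three limsup estimates and splitting the cross contribution with Young's inequality $ab\le\eps a^2+\tfrac1{4\eps}b^2$ (with $a=\|x_0\|_X$) yields \eqref{ex:Dissipative_Inequality_linear_system} with the explicit constant \eqref{eq:ceps-def}; the precise tracking of the factors $M$, $\kappa(0)$ and the operator norms is routine. Choosing any $\eps\in(0,1)$ then shows that $V$ satisfies \eqref{DissipationIneq_nc} with $\alpha(r)=(1-\eps)r^2\in\Kinf$ and $\sigma(r)=c(\eps)r^2\in\K$, so $V$ is a non-coercive ISS Lyapunov function. Finally, as recorded in Example~\ref{exam2b}, every such linear system with $\infty$-admissible $B$ is forward complete and has the CEP and BRS properties, so Theorem~\ref{t:ISSLyapunovtheorem} gives ISS. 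I expect the regularization step — writing $z(t)=A_{-1}\zeta(t)$ and rigorously transferring $A_{-1}$ onto $P$ through the $X_{-1}$--$D(A^*)$ duality, i.e.\ verifying the operator identities above — to be the main obstacle.
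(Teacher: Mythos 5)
Your proposal is correct and follows essentially the same route as the paper's proof: the same splitting of $V(\phi(t,x_0,u))-V(x_0)$ into homogeneous, cross, and input-only terms, the same key regularization $z(t)=AA_{-1}^{-1}z(t)$ with the $O(t)$ bound on $A_{-1}^{-1}z(t)$ and the unbounded factor transferred onto $P$ via conditions (ii)--(iii), then Young's inequality and the conclusion via CEP/BRS and Theorem~\ref{t:ISSLyapunovtheorem}. The only immaterial deviations are auxiliary: you derive exponential stability by a Datko argument and bound the homogeneous term by integrating the Lyapunov inequality along orbits in $D(A)$ and extending by density, whereas the paper cites the operator Lyapunov-equation theorem (applied to $\tfrac12(P+P^*)$) and computes the two homogeneous limits directly from the generator property together with boundedness of $PA$ and $A^*P$ (the latter obtained from (ii) and the Closed Graph Theorem, a step worth stating explicitly since $\|A^*P\|_{L(X)}$ enters $c(\eps)$).
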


\begin{remark}
\label{rem:Why-Real-parts} 
Note that we have to take the real parts of the expressions in \eqref{Lyap} and \eqref{eq:LyapIneq}, as we deal with complex Hilbert spaces and we do not assume that $P$ is a positive operator on $X$.
\end{remark}

\begin{remark}
\label{rem:Classical_Form_Lyapunov_Equation} 
If in addition to the assumptions of Theorem
\ref{thm:Gen_ISS_LF_Construction} the operator $P$ is self-adjoint, that
is, if $P=P^*$, then equation \eqref{eq:Gn_Lyap_Equation} is equivalent to \eqref{eq:LyapIneq}. 

\end{remark}

\begin{proof}
Note that linear systems with admissible input operators satisfy both the
CEP and the BRS property, which follows easily from inequality \eqref{eq:Bounds-on-convolution}.
Due to Theorem~\ref{t:ISSLyapunovtheorem}, $\Sigma$ is ISS if $V$ is a non-coercive ISS Lyapunov function for $\Sigma$.

By the assumptions
\[
0 < V(x) \leq \|P\|_{L(X)}\|x\|^2_X, \quad x\in X\backslash \{0\},
\]
and thus \eqref{LyapFunk_1Eig_nc_ISS} holds.
It remains to show the dissipation inequality \eqref{DissipationIneq_nc} for $V$.

The operator $A:D(A)\subset X\rightarrow X$ is densely defined as an infinitesimal generator of a $C_0$-semigroup, and hence $A^*$ is well-defined and again the generator of a $C_0$-semigroup, see \cite[Corollary 10.6]{Paz83}. In particular, this implies that $A^*$ is a closed operator. 
Since $P \in L(X)$, the operator $S:=A^*P$ with the domain $D(S):=\{x\in X: Px \in D(A^*)\}$ is a closed operator, see \cite[Exercise 5.6]{Wei80}.
However, by our assumptions $\im(P) \subset D(A^*)$, which implies $D(S) = X$,
and thus $S = A^*P \in L(X)$ by the Closed Graph Theorem. In particular, the term $\|A^\ast P\|_{L(X)}$ in 
\eqref{eq:ceps-def} makes sense.


For $x_0\in X$ and $u\in L^\infty([0,\infty),U)$, we have
{\allowdisplaybreaks
\begin{align}
V(\phi(t,x_0,u))&-V(x_0) \label{eq:MainEquality}\\
=&\, \re\, \left\langle P\left(T(t)x_0+\int_0^t T_{-1}(t-s)Bu(s)ds\right),\right.\nonumber\\
&\qquad  \left. T(t)x_0+\int_0^t T_{-1}(t-s)Bu(s)ds\right\rangle_X -\re\,\langle Px_0,x_0\rangle_X\nonumber\\
=&\, \re\,\langle PT(t)x_0,x_0\rangle_X -\re\,\langle Px_0,x_0\rangle_X \nonumber\\
&+ \re\,\langle PT(t)x_0,T(t)x_0\rangle_X - \re\,\langle PT(t)x_0,x_0\rangle_X \label{eq:line2}\\
&+ \re\, \Big\langle PT(t)x_0 ,\int_0^t T_{-1}(t-s)Bu(s)ds\Big\rangle_X \label{eq:line3}\\
&\qquad + \re\,\left\langle P\int_0^t T_{-1}(t-s)Bu(s)ds ,T(t)x_0\right\rangle_X  \label{eq:line4}\\
&+ \re\, \left\langle P\int_0^t T_{-1}(t-s)Bu(s)ds ,\int_0^t T_{-1}(t-s)Bu(s)ds\right\rangle_X.   \label{eq:line5}
\end{align}
}
The terms in line \eqref{eq:line2} of the previous expression can be transformed into:
\begin{align}
\re\,\langle PT(t)x_0,T(t)&x_0\rangle_X - \re\,\langle PT(t)x_0,x_0\rangle_X = \re\,\langle PT(t)x_0,T(t)x_0 - x_0\rangle_X. \label{eq:line2_Estimate}
\end{align}
Applying \cite[Theorem 5.1.3]{CuZ95} to the operator $\tfrac{1}{2}(P+P^*)$, we see that the conditions
 (i), (ii) and (iv) imply that $A$ generates an exponentially stable
 semigroup. 
This implies (see e.g. \cite[Proposition 5.2.4]{JaZ12}) that $0\in\rho(A)$ and thus $A^{-1}\in L(X)$  exists. Further, the exponential stability of the $C_0$-semigroup $(T(t))_{t\ge 0}$ implies 
\[ \|T(t)\|_{L(X)} \le Me^{-\omega t},\quad t\ge 0,\]
for some constants $M$, $\omega>0$. Thanks to $\rho(A)=\rho(A_{-1})$, the operator $A_{-1}^{-1}$ exists  as well. 

By \cite[Theorem II.5.5]{EnN00}, the map $A:D(A) \to X$ can be continuously extended to the linear isometry
 $A_{-1}$ which maps $(X,\|\cdot\|_X)$ onto $(X_{-1},\|\cdot\|_{X_{-1}})$.
Hence $A_{-1}^{-1}$,
mapping $(X_{-1},\|\cdot\|_{X_{-1}})$ onto $(X,\|\cdot\|_X)$, is again a
  linear isometry, and thus a bounded operator.
As $B\in L(U,X_{-1})$, we have that $A^{-1}_{-1}B \in L(U, X)$. In particular, 
$T_{-1}(t-s)A_{-1}^{-1}Bu(s) = T(t-s)A_{-1}^{-1}Bu(s) \in X$ for a.e. $s\geq 0$.
Due to the fact, that $A_{-1}^{-1}$ and $T_{-1}(t-s)$ commute, we obtain
{\allowdisplaybreaks
\begin{align}
\Big\| A_{-1}^{-1} \int_0^t T_{-1}(t-s)Bu(s)ds \Big\|_X
&=   \Big\| \int_0^t A_{-1}^{-1}T_{-1}(t-s)Bu(s)ds \Big\|_X\nonumber\\
&=  \Big\| \int_0^t T_{-1}(t-s)A_{-1}^{-1}Bu(s)ds \Big\|_X.\nonumber\\
&\leq    \int_0^t \|T(t-s)\|_{L(X)}\|A_{-1}^{-1}B\|_{L(U,X)}\|u(s)\|_U ds \nonumber\\
&\leq   \int_0^t M ds \|A_{-1}^{-1}B\|_{L(U,X)}\|u\|_\infty \nonumber\\
&\leq   M t \|A_{-1}^{-1}B\|_{L(U,X)}\|u\|_\infty.  \label{eq:est}
\end{align}}

Since $\im(P)\subset D(A^*)$,  we estimate the expression in
\eqref{eq:line3} using the Cauchy-Schwarz inequality and \eqref{eq:est}
{\allowdisplaybreaks
\begin{align}
 \re\, \Big\langle P T(t)x_0 ,\int_0^t T_{-1}(t-s)&Bu(s)ds\Big\rangle_X\nonumber\\
 =&\re\, \Big\langle P T(t)x_0 ,A A^{-1}\int_0^t  T_{-1}(t-s)Bu(s)ds\Big\rangle_X\nonumber\\
=&\re\,\Big\langle  A^\ast P T(t)x_0 , A_{-1}^{-1} \int_0^t T_{-1}(t-s)Bu(s)ds\Big\rangle_X\nonumber\\
\leq & \|A^\ast P T(t)x_0\|_X \cdot \Big\| A_{-1}^{-1} \int_0^t T_{-1}(t-s)Bu(s)ds \Big\|_X\nonumber\\
%
%
\leq & \|A^\ast P\|_{L(X)} \|T(t)x_0\|_X \cdot  M t \|A_{-1}^{-1}B\|_{L(U,X)}\|u\|_\infty.  \label{eq:line3_Estimate}
\end{align}
}
To obtain an upper bound for the expression \eqref{eq:line4}, we use again \eqref{eq:est} to obtain 
\begin{align}
\re\,\Big\langle P\int_0^t T_{-1}(t-s)Bu(s)ds,& T(t)x_0\Big\rangle_X\nonumber\\
=	& \re\,\left\langle PAA^{-1}\int_0^t T_{-1}(t-s)Bu(s)ds ,T(t)x_0\right\rangle_X\nonumber\\
\leq & \|PA\|_{L(X)} \cdot  M t \|A_{-1}^{-1}B\|_{L(U,X)}\|u\|_\infty \|T(t)x_0\|_X.  \label{eq:line4_Estimate_b}
\end{align}
Finally, we estimate the expression \eqref{eq:line5} using \eqref{eq:est} and $\kappa$ as defined in \eqref{kappa}
\begin{align}
\re\,\Big\langle P\int_0^t T_{-1}&(t-s)Bu(s)ds ,\int_0^t T_{-1}(t-s)Bu(s)ds\Big\rangle_X\nonumber\\
&\! =  \re\,\left\langle P\int_0^t \!T_{-1}(t-s)Bu(s)ds ,AA^{-1}_{-1}\int_0^t \!T_{-1}(t-s)Bu(s)ds\!\!\right\rangle_X\nonumber\\
&\! =  \re\,\left\langle A^*P\!\int_0^t \!T_{-1}(t-s)Bu(s)ds ,A^{-1}_{-1}\!\int_0^t \!T_{-1}(t-s)Bu(s)ds\!\!\right\rangle_X\nonumber\\
&\!\leq  \|A^*P\|_{L(X)} \kappa(t)\|u\|_\infty \cdot Mt \|A_{-1}^{-1}B\|_{L(U,X)}\|u\|_\infty.   \label{eq:line5_Estimate}
\end{align}
Substituting \eqref{eq:line2_Estimate}, \eqref{eq:line3_Estimate}, 
\eqref{eq:line4_Estimate_b}, and \eqref{eq:line5_Estimate} into \eqref{eq:MainEquality}, we obtain:
\begin{align*}
V(\phi(t,x_0,u))-V(x_0) \le \,& \re\, \langle PT(t)x_0 - Px_0, x_0\rangle_X + \re\,\langle PT(t)x_0,T(t)x_0 - x_0\rangle_X\\
&+\|A^\ast P\|_{L(X)} \|T(t)x_0\|_X \cdot  M t \|A_{-1}^{-1}B\|_{L(U,X)}\|u\|_\infty\\
&+ \|PA\|_{L(X)} \cdot  M t \|A_{-1}^{-1}B\|_{L(U,X)}\|u\|_\infty \|T(t)x_0\|_X\\
& + \|A^*P\|_{L(X)} \kappa(t)\|u\|_\infty \cdot Mt \|A_{-1}^{-1}B\|_{L(U,X)}\|u\|_\infty.
\end{align*}
For $x_0\in X$, we have $A^{-1}x_0\in D(A)$ and we obtain from the definition of the generator $A$ that
\begin{align*}
\mathop{\overline{\lim}}_{t\searrow 0} \re\, \frac{1}{t}\langle PT(t)x_0 - Px_0, x_0\rangle_X
& =\mathop{\overline{\lim}}_{t\searrow 0} \re\, \frac{1}{t}\langle PA[ T(t)A^{-1}x_0 - A^{-1}x_0], x_0\rangle_X\\
&= \re\scalp{PAx_0}{x_0}_X
\end{align*}
and similarly
\begin{align*}
\mathop{\overline{\lim}}_{t\searrow 0} \re\, \frac{1}{t}\langle PT(t)x_0,T(t)x_0 - x_0\rangle_X 
& =\mathop{\overline{\lim}}_{t\searrow 0} \re\, \frac{1}{t}\langle A^*PT(t)x_0 , T(t)A^{-1}x_0 - A^{-1}x_0\rangle_X\\
&= \re\scalp{A^* Px_0}{x_0}_X.
\end{align*}
This implies for every $\varepsilon>0$ that (recall the definition of $\kappa(0)$ before \eqref{kappa})
{\allowdisplaybreaks
\begin{align*}
 \dot{V}_u(x_0) =\mathop{\overline{\lim}}_{t\searrow 0} \frac{1}{t}&\Big(V\big(\phi(t,x_0,u)\big)-V(x_0)\Big)\\
\le &\  \re\scalp{PAx_0}{x_0}_X + \re\,\scalp{A^*Px_0}{x_0}_X \\
&  + \|A^\ast P\|_{L(X)} \|x_0\|_X  \|A_{-1}^{-1}B\|_{L(U,X)} M  \|u\|_\infty\\
&+  \|PA\|_{L(X)}\|  A_{-1}^{-1}B\|_{L(U,X)}  M  \|u\|_\infty  \|x_0\|_X\\
& +   \|A^\ast P\|_{L(X)} \|  A_{-1}^{-1}B\|_{L(U,X)}  M  \kappa(0)\|u\|^2_\infty\\
=&\    \re\scalp{PAx_0}{x_0}_X + \re\,\scalp{A^*Px_0}{x_0}_X \\
&  +  \| x_0\|_X (\|A^\ast P\|_{L(X)} + \|PA\|_{L(X)})  \|A_{-1}^{-1}B\|_{L(U,X)} M  \|u\|_\infty\\
& +  \|A^\ast P\|_{L(X)} \|  A_{-1}^{-1}B\|_{L(U,X)}  M  \kappa(0)\|u\|^2_\infty.
\end{align*}
Using Young's inequality and the estimate \eqref{eq:LyapIneq} we proceed to
\begin{align*}
 \dot{V}_u(x_0) \le&  - \|x_0\|_X^2 + \varepsilon  \|x_0\|_X^2  + \frac{(\|A^\ast P\|_{L(X)} +\|PA\|_{L(X)})^2 \|A^{-1}_{-1} B\|_{L(U,X)}^2M^2 }{4\varepsilon} \|u\|^2_\infty  \\
& + \|A^\ast P\|_{L(X)}\|  A_{-1}^{-1}B\|_{L(U,X)}  M  \kappa(0)\|u\|^2_\infty,
\end{align*}
which shows the dissipation inequality \eqref{ex:Dissipative_Inequality_linear_system}, and thus also
\eqref{DissipationIneq_nc}.
}
\end{proof}

\begin{remark}
\label{rem:Reformulation_of_Linear_Theorem} 
Theorem~\ref{thm:Gen_ISS_LF_Construction} has been formulated as a direct
Lyapunov theorem. However, the following reformulation as a partial converse result is also possible. Assume that \eqref{InfiniteDim2} is ISS, and the solution $P$ of the Lyapunov equation \eqref{eq:Gn_Lyap_Equation} satisfies $\im(P) \subset D(A^*)$ and $PA$ is bounded. Then \eqref{Lyap} is an ISS Lyapunov 
function for \eqref{InfiniteDim2}. 
\end{remark}

It is of virtue to compare the ISS Lyapunov theorem for bounded input operators (Proposition~\ref{prop:Converse_ISS_Lyapunov_theorem_lin_Systems_with_bounded_input_operators}) and ISS Lyapunov theorem for admissible input operators (Theorem~\ref{thm:Gen_ISS_LF_Construction}). The ISS Lyapunov function candidate considered in both these results, is the same. What differs is the assumptions and the set of input operators, for which this function is indeed an ISS Lyapunov function. Proposition~\ref{prop:Converse_ISS_Lyapunov_theorem_lin_Systems_with_bounded_input_operators} states that if the semigroup, generated by $A$ is exponentially stable, then there  is an operator $P$, which satisfies the assumptions (i) and (iv) of Theorem~\ref{thm:Gen_ISS_LF_Construction} and the condition $P=P^*$, and furthermore \eqref{eq:QuadraticLF} is an ISS Lyapunov function for \eqref{InfiniteDim2} for any bounded input operator. 
\emph{Thus, the key additional assumptions which we impose in order to tackle the unboundedness of an input operator, are the assumptions (ii) and (iii).}
We note, that  that with these assumptions \eqref{Lyap} is an ISS Lyapunov function for any $\infty$-admissible operator $B$.

%
%
%
%
%

\section{Applications of Theorem~\ref{thm:Gen_ISS_LF_Construction}}
\label{sec:Applications}

In this section, we show applicability of Theorem~\ref{thm:Gen_ISS_LF_Construction} for some important special cases.
We start with sufficient conditions, guaranteeing that Theorem~\ref{thm:Gen_ISS_LF_Construction} can be applied with $P=-A^{-1}$.
Then we show that these sufficient conditions are fulfilled for broad classes of systems, generated by subnormal operators. Finally, we proceed to diagonal semigroups (whose generators are self-adjoint operators) and finally we give a construction of a non-coercive ISS Lyapunov function for a heat equation with Dirichlet boundary inputs.

\subsection{A special case: $P=-A^{-1}$}
\label{sec:choosing_A_inverse}

In this section we give sufficient conditions for the applicability of Theorem~\ref{thm:Gen_ISS_LF_Construction} with $P:=-A^{-1}$.

\begin{proposition}\label{prop:Conv_ISS_LF_Theorem_LinOp}
Let $A$ be  the generator of an exponentially stable $C_0$-semigroup $(T(t))_{t\ge 0}$ on a (complex) Hilbert space $X$
 and let $\Uc:=L^\infty(\R_+,U)$, where $U$ is a Banach space.

Further, assume that
\begin{itemize}
	\item[(a)] $D(A)\subset D(A^\ast)$
	\item[(b)] there is $\delta\in(0,1)$ such that for every $x \in X$ we have 
\begin{equation}\label{eqn:a2}
 \re\, \langle A^*A^{-1}x, x\rangle_X + \delta \|x\|_X^2\ge 0
\end{equation}
	\item[(c)] $  \re\,\langle Ax,x\rangle_X <0$ holds for  every $x\in D(A)\backslash\{0\}$.
\end{itemize}
Then
\begin{equation}\label{Lyap-Apower--1}
 V(x) := - \re\, \langle A^{-1}x,x\rangle_X
 \end{equation}
is an ISS Lyapunov function for  \eqref{InfiniteDim2} for any $\infty$-admissible operator $B\in L(U,X_{-1})$.
\end{proposition}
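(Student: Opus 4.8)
The plan is to show that the bounded operator $P:=-A^{-1}$ meets all four hypotheses (i)--(iv) of Theorem~\ref{thm:Gen_ISS_LF_Construction}; the conclusion then follows at once, since that theorem identifies $V(x)=\re\langle Px,x\rangle_X=-\re\langle A^{-1}x,x\rangle_X$ as a non-coercive ISS Lyapunov function and guarantees ISS of \eqref{InfiniteDim2} for every $\infty$-admissible $B$. First I would record that exponential stability of $(T(t))_{t\ge0}$ gives $0\in\rho(A)$, so that $A^{-1}\in L(X)$ exists and $P\in L(X)$ is well defined; boundedness of $P$ also furnishes the upper bound $V(x)\le\|A^{-1}\|_{L(X)}\|x\|_X^2$ once positivity is established.

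The positivity \eqref{eq:Positivity} and conditions (ii)--(iii) are immediate. For \eqref{eq:Positivity} I would set $y:=A^{-1}x$ for $x\neq0$ (so $y\in D(A)\setminus\{0\}$) and use invariance of the real part under conjugation to obtain $\re\langle Px,x\rangle_X=-\re\langle y,Ay\rangle_X=-\re\langle Ay,y\rangle_X>0$ by the strict dissipativity (c). Condition (ii) of Theorem~\ref{thm:Gen_ISS_LF_Construction} holds because $\im(P)=\im(A^{-1})=D(A)\subset D(A^{\ast})$ by assumption (a), and condition (iii) holds because $PA=-A^{-1}A=-\Id$ on $D(A)$ is already bounded. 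The boundedness of $A^{\ast}P$, which Theorem~\ref{thm:Gen_ISS_LF_Construction} needs in order to make sense of $\|A^{\ast}P\|_{L(X)}$, is not a separate requirement: it follows from (ii) together with $P\in L(X)$ by the closed-graph argument carried out inside the proof of that theorem.

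The only step with real content is the Lyapunov inequality \eqref{eq:LyapIneq}. On $D(A)$ one has $PA+A^{\ast}P=-\Id-A^{\ast}A^{-1}$, so that
\begin{equation*}
\re\langle(PA+A^{\ast}P)x,x\rangle_X=-\|x\|_X^2-\re\langle A^{\ast}A^{-1}x,x\rangle_X,\qquad x\in D(A).
\end{equation*}
Assumption (b), that is \eqref{eqn:a2}, is exactly the lower bound $\re\langle A^{\ast}A^{-1}x,x\rangle_X\ge-\delta\|x\|_X^2$, and substituting it yields $\re\langle(PA+A^{\ast}P)x,x\rangle_X\le-(1-\delta)\|x\|_X^2$.

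I expect the handling of this constant, rather than any of the operator identities, to be the main obstacle. The bound above is a negative-definite Lyapunov inequality precisely when $1-\delta>0$; in that regime I would apply Theorem~\ref{thm:Gen_ISS_LF_Construction} to the rescaled operator $\tilde P:=(1-\delta)^{-1}P$, which leaves (i)--(iii) untouched and sharpens the constant in \eqref{eq:LyapIneq} to exactly $1$. Since $\tilde V=(1-\delta)^{-1}V$ is a positive multiple of $V$, the two defining estimates of a non-coercive ISS Lyapunov function transfer back to $V$ itself, so $V(x)=-\re\langle A^{-1}x,x\rangle_X$ is an ISS Lyapunov function for \eqref{InfiniteDim2} and ISS follows for every $\infty$-admissible $B$. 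Before finalizing I would want to confirm that the intended range is $\delta\in(0,1)$, since the candidate $V$ can genuinely fail to decrease along the undisturbed flow when $\delta\ge1$.
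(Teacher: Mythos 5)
Your proposal is correct and takes essentially the same route as the paper's own proof: both verify hypotheses (i)--(iv) of Theorem~\ref{thm:Gen_ISS_LF_Construction} for $P:=-A^{-1}$ via the same computations (positivity from (c) by writing $x=Ay$, range condition from (a), $PA=-I$, and the Lyapunov inequality from (b)), including the same rescaling $\tilde P:=(1-\delta)^{-1}P$ to absorb the constant $1-\delta$. Your closing caveat is also well taken: the paper's proof silently reads hypothesis (b) as ``there is a $\delta<1$'', so the intended range is indeed $\delta\in(0,1)$, without which the bound $-(1-\delta)\|x\|_X^2$ is not negative definite and the argument breaks down.
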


\begin{proof}
As $A$ generates an exponentially stable semigroup, $0 \in\rho(A)$ and thus $P:=-A^{-1} \in L(X)$. We show step by step 
that this choice of $P$ satisfies all the requirements (i)--(iv) of Theorem~\ref{thm:Gen_ISS_LF_Construction}.\\
\textbf{(i).} For any $x\in X\backslash\{0\}$ there is $y \in D(A)\backslash\{0\}$ so that $x = Ay$. Then by the assumptions of the proposition it holds that
\[
V(x) = -\re\scalp{y}{Ay}_X = -\re\scalp{Ay}{y}_X >0.
\]
\textbf{(ii).} We have $\im(P)=\im(A^{-1}) = D(A) \subset D(A^*)$, which holds by our assumptions.\\
\textbf{(iii).} Trivial as $PA = -I$.\\
\textbf{(iv).} 
By assumptions, there is a $\delta<1$ so that
\begin{eqnarray*}
\re\scalp{(PA+A^*P)x}{x}_X = \re\scalp{(-I-A^*A^{-1})x}{x}_X
												&=& -\scalp{x}{x}_X - \re\scalp{A^*A^{-1} x}{x}_X\\
												&\leq& -(1-\delta)\scalp{x}{x}_X,
\end{eqnarray*}
and thus $P$ satisfies the Lyapunov inequality up to a scaling coefficient (and $\tilde{P}:=\frac{1}{1-\delta}P$ satisfies precisely \eqref{eq:LyapIneq}).

Hence all assumptions of Theorem~\ref{thm:Gen_ISS_LF_Construction} are satisfied, and an application of 
Theorem~\ref{thm:Gen_ISS_LF_Construction} shows the claim.
\end{proof}

\begin{remark}
\label{rem:Equivalent_inequality}
If $D(A)\subset D(A^\ast)$, then
inequality \eqref{eqn:a2} is equivalent to the existence of a constant $\delta'<1$ satisfying
\[ \|(A+A^\ast)x\|_X^2 + \delta' \|Ax\|_X^2 \ge \|A^\ast x\|_X^2,\qquad x\in D(A).\]
If $A$ generates a strongly continuous contraction semigroup, then \eqref{eqn:a2} implies that the semigroup $(T(t))_{t\ge 0}$ is 2-hypercontractive \cite{JPP17}. In particular, subnormal and normal operators whose spectrum lies in a sector, satisfy  \eqref{eqn:a2}, see Proposition \ref{propsub}.
\end{remark}

\subsection{Analytic semigroups generated by subnormal operators}
\label{sec1}


In this section we show that Theorem~\ref{thm:Gen_ISS_LF_Construction} can be applied to a broad class of analytic semigroups over Hilbert spaces generated by subnormal operators.

A closed, densely-defined operator $A$ on a Hilbert space $X$ is called \emph{subnormal}, if there is a Hilbert space $Z$ containing $X$ as a subspace and a normal operator $(N,D(N)):Z\to Z$ so that $A=N_{|X}$ (the restriction of $N$ to $X$)
and $X$ is an invariant subspace for $N$, that is, $N(D(N)\cap X)\subset X$.
We write $P$ for the
orthogonal projection from $Z$ onto $X$.

By \cite[Th. X.4.19]{Con90}, there is a measure space $(W, \Omega, \mu)$ and an $\Omega$-measurable function 
$\nu$ such that $N$ is unitarily equivalent to $M_\nu$, a multiplication operator on $L^2(\mu):=L^2(W,\Omega,\mu)$.
Note that in \cite[Th. X.4.19]{Con90} it was assumed that the Hilbert space $Z$ is separable, but the result can be extended to the non-separable case.
Furthermore, $Ax=\nu x$, $A^* x=P(\overline \nu x)$.
If $Z$ is a separable Hilbert space, then the measure space $(W, \Omega, \mu)$ is $\sigma$-finite. Note also that separability of $X$ ensures separability of $Z$. Indeed, if an operator on a separable Hilbert space has a normal extension, then its minimal normal extension will certainly act on a
separable space, see \cite{Con91} as a general reference.



We denote the spectrum of a linear closed operator $A$ by $\sigma(A)$. It holds that:
\begin{lemma}
\label{lem:Domains_of_subnormal_operators} 
A  subnormal operator $A$ satisfies $D(A)\subset D(A^*)$. Further, there exists a minimal normal extension $N$ satisfying $\sigma(N)\subset\sigma(A)$.
\end{lemma}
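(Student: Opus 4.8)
I would work entirely in the multiplication‑operator model recalled just above the lemma, where $N$ is unitarily equivalent to $M_\nu$ on $L^2(\mu)$, $X$ is a closed $M_\nu$‑invariant subspace, $A=M_\nu|_X$, and $P$ is the orthogonal projection onto $X$. The fact I would isolate first is that normality of $N$ forces $D(N)=D(N^*)$: indeed $N^*\cong M_{\overline{\nu}}$ and $|\nu|=|\overline{\nu}|$, so both multiplication operators have domain $\{f\in L^2(\mu):\nu f\in L^2(\mu)\}$. Now take $x\in D(A)=D(N)\cap X$. Since $x\in D(N)=D(N^*)$ we have $\overline{\nu}x\in L^2(\mu)$, and for every $y\in D(A)$ I would compute
\begin{equation*}
\scalp{Ay}{x}_X=\scalp{\nu y}{x}_{L^2(\mu)}=\scalp{y}{\overline{\nu}x}_{L^2(\mu)}=\scalp{y}{P(\overline{\nu}x)}_X,
\end{equation*}
where the last equality uses $y\in X$. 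Thus $y\mapsto\scalp{Ay}{x}_X$ is a bounded functional on $D(A)$, so $x\in D(A^*)$ with $A^*x=P(\overline{\nu}x)$. This simultaneously gives $D(A)\subset D(A^*)$ and reconfirms the formula $A^*x=P(\overline{\nu}x)$ stated in the text.

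\textbf{Part 2 (a minimal normal extension with $\sigma(N)\subset\sigma(A)$).} Starting from an arbitrary normal extension $N$ on $Z$, I would first cut down to a minimal one. Using the spectral measure $E$ of $N$, set
\begin{equation*}
Z_0:=\overline{\operatorname{span}}\{E(\Delta)x:\Delta\subset\C\ \text{Borel},\ x\in X\};
\end{equation*}
since $X$ is $N$‑invariant, $Z_0$ reduces $N$, contains $X$, and $N_0:=N|_{Z_0}$ is a normal extension of $A$ which is minimal by construction (existence and essential uniqueness being classical, cf.\ \cite{Con91}). It then remains to prove the spectral inclusion $\sigma(N_0)\subset\sigma(A)$. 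The strategy is: for $\lambda\notin\sigma(A)$, observe that on $X$ the operator $N_0-\lambda$ agrees with $A-\lambda$ and is therefore bounded below; writing $N_0$ in upper‑triangular form with respect to $Z_0=X\oplus(Z_0\ominus X)$ (upper‑triangular precisely because $X$ is invariant) and combining normality $N_0^*N_0=N_0N_0^*$ with the minimality of $Z_0$, one upgrades this to bounded invertibility of $N_0-\lambda$ on all of $Z_0$, i.e.\ $\lambda\in\rho(N_0)$. Equivalently, in the multiplication model $\sigma(N_0)$ is the essential range of $\nu$, and the point is to show that minimality propagates the lower bound of $M_{\nu-\lambda}$ on $X$ to a genuine essential lower bound for $|\nu-\lambda|$ on the whole support of $\mu$.

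\textbf{Main obstacle.} The only substantial step is the spectral‑inclusion part; everything in Part 1 and the reduction to the minimal extension is essentially bookkeeping. For bounded subnormal operators the inclusion is exactly the classical spectral‑inclusion theorem of \cite{Con91}, and the role of \emph{minimality} is essential there: without it one could enlarge $N_0$ by a normal summand whose spectrum lies outside $\sigma(A)$, so the lower‑bound propagation genuinely relies on $Z_0$ being generated by $X$ under $N_0^*$. In the present unbounded setting I would either quote the corresponding unbounded spectral‑inclusion result for subnormal operators, or derive it directly in the multiplication model by the lower‑bound/minimality argument above, taking care with the domain $D(N_0)=D(N_0^*)$ throughout; this domain handling is where I expect the real work to lie.
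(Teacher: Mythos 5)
Your Part 1 is correct and coincides with the paper's (one-line) argument: the paper deduces $D(A)\subset D(A^*)$ from the identity $D(N)=D(N^*)$ for normal operators, citing \cite[Prop.~X.4.3]{Con90}, and your computation $\langle Ay,x\rangle_X=\langle y,P(\overline{\nu}x)\rangle_X$ for $y\in D(A)$ is exactly the bookkeeping that citation suppresses, carried out correctly (including the formula $A^*x=P(\overline{\nu}x)$, which the paper states in the surrounding text without proof).

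Part 2 is where the substance lies, and there your self-contained route has a genuine gap. The paper's entire proof of the second assertion is the citation \cite[Theorem 2.3]{PaBh89}, so your declared fallback --- quoting the unbounded spectral-inclusion theorem --- is exactly what the paper does, and with that option your proposal is fine. But the alternative derivation you sketch breaks down at the step you call an ``upgrade''. What minimality of $Z_0$ gives you cheaply is that $N_0-\lambda$ has dense range when $\lambda\notin\sigma(A)$ (the closure of the range of a normal operator is a reducing subspace, and here it contains $X=(A-\lambda)D(A)$), and normality then gives trivial kernel; but for a normal operator, injectivity plus dense range does not put $\lambda$ in the resolvent set --- multiplication by $t$ on $L^2[0,1]$ at $\lambda=0$ is the standard counterexample. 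Passing from ``bounded below on $X$'' to a uniform essential lower bound for $|\nu-\lambda|$ on all of $Z_0$ is precisely the content of the Bram--Halmos spectral inclusion theorem; even in the bounded case its proof is not triangular-form bookkeeping but Bram's construction of an explicit bounded inverse on $\operatorname{span}\{N^{*k}x : x\in X,\ k\ge 0\}$. In the unbounded setting there is a further obstruction your sketch ignores: minimal normal extensions are no longer essentially unique (there are inequivalent notions of minimality), which is why the lemma is phrased existentially --- ``there \emph{exists} a minimal normal extension satisfying $\sigma(N)\subset\sigma(A)$'' --- and why the paper outsources it to \cite{PaBh89}. Cutting down an arbitrary normal extension by spectral projections and then hoping to prove the inclusion for that particular $N_0$ is therefore not guaranteed to succeed as stated; if you want a complete proof, the citation route is the one to take.
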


\begin{proof}
The first assertion follows  from $D(N) = D(N^*)$, see \cite[Prop. X.4.3]{Con90}. The second assertion is proved in \cite[Theorem 2.3]{PaBh89}.
\end{proof}

\begin{example}
\begin{enumerate}
\item Clearly, every normal operator on a Hilbert space is subnormal.
\item Symmetric operators on Hilbert spaces and analytic Toeplitz operators $T_g$ on the Hardy space $\text{\bf H}^2(\D)$ are subnormal,  \cite{PaBh89}.
\end{enumerate}
\end{example}
For $\theta \in [0,\pi/2)$ we define
\[ S_\theta := \{s\in \mathbb C\mid |{\rm arg}\,(- s)|\le \theta \}.\]

\begin{proposition}\label{prop2}
Let $A$ be a subnormal operator on a Hilbert space $X$ and assume $\sigma(A)\subset S_\theta$, for some
$\theta \in [0,\pi/2)$, and $B\in L(\mathbb C^m,X_{-1})$. Then:
\begin{itemize}
	\item[(i)]   $A$ generates a bounded analytic $C_0$-semigroup  of contractions $(T(t))_{t\ge 0}$,
	\item[(ii)]  $B$  is $\infty$-admissible for  $(T(t))_{t\ge 0}$.
\end{itemize}
Moreover, if in addition $0\not\in \sigma(A)$, then
\begin{itemize}
	\item[(iii)]  $A$ generates an exponentially stable semigroup and the system \eqref{InfiniteDim2} is ISS.
\end{itemize}
\end{proposition}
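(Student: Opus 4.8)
The plan is to pull every assertion back to the minimal normal extension of $A$ and then read off the semigroup properties directly from the location of the spectrum, using normality to convert spectral information into operator-norm bounds. By Lemma~\ref{lem:Domains_of_subnormal_operators} there is a minimal normal extension $N$ on a Hilbert space $Z\supset X$ with $\sigma(N)\subset\sigma(A)\subset S_\theta$, and by the spectral theorem $N$ is unitarily equivalent to a multiplication operator $M_\nu$ on some $L^2(\mu)$ with $\nu(w)\in S_\theta$ for $\mu$-almost every $w$. Every $s\in S_\theta$ satisfies $\re s\le -|s|\cos\theta\le 0$, so the normal-operator identity $\|(\lambda I-N)^{-1}\|=\operatorname{dist}(\lambda,\sigma(N))^{-1}$ turns the inclusion $\sigma(N)\subset S_\theta$ into the bound $\|(\lambda I-N)^{-1}\|\le M_\psi/|\lambda|$ on each sector $\{|\arg\lambda|\le\psi\}$ with $\psi<\pi-\theta$; since $\pi-\theta>\pi/2$, this is exactly the sectoriality condition guaranteeing that $N$ generates a bounded analytic semigroup. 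Moreover $\|e^{tN}\|=\esssup_{w}|e^{t\nu(w)}|=\esssup_{w}e^{t\re\nu(w)}\le 1$, so this semigroup is contractive.

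To obtain assertion (i) I would transfer these facts to $A$. For $\lambda\in\C\setminus S_\theta\subset\rho(A)\cap\rho(N)$ and $x\in X$, the vector $y:=(\lambda I-N)^{-1}x$ lies in $D(A)=D(N)\cap X$: it solves $(\lambda I-A)y=x$ because $Ny=Ay$ on $D(A)$, and $(\lambda I-A)$ is a bijection of $D(A)$ onto $X$. Hence $(\lambda I-A)^{-1}=(\lambda I-N)^{-1}|_X$ on $\C\setminus S_\theta$, so $A$ inherits the sectorial resolvent estimate of $N$ and is itself sectorial with the same sector, and therefore generates a bounded analytic $C_0$-semigroup $(T(t))_{t\ge0}$. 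Representing both semigroups by the Dunford integral along a contour $\Gamma\subset\C\setminus S_\theta$ and applying it to $x\in X$, the integrand values $e^{t\lambda}(\lambda I-N)^{-1}x=e^{t\lambda}(\lambda I-A)^{-1}x$ lie in the closed subspace $X$, whence $e^{tN}x=T(t)x\in X$; thus $T(t)=e^{tN}|_X$ and $\|T(t)\|\le\|e^{tN}\|\le 1$, proving (i).

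Assertion (ii) is then immediate from Remark~\ref{rem:Infty-admissibility}: here $U=\C^m$ with $\dim U=m<\infty$, $X$ is a Hilbert space, and by (i) the operator $A$ (take $\lambda=0$) generates an analytic semigroup that is already a contraction semigroup, hence trivially similar to one. The second bullet of that remark, resting on \cite[Theorem 1]{JSZ17}, yields that $B$ is $\infty$-admissible and that all mild solutions are continuous, so $(X,L^\infty(\R_+,U),\phi)$ is a forward complete control system. For (iii) I would use that $0\notin\sigma(A)$ together with closedness of $\sigma(A)$ gives $d:=\operatorname{dist}(0,\sigma(A))>0$; combined with $\sigma(N)\subset\sigma(A)\subset S_\theta$ this forces $\re\lambda\le-|\lambda|\cos\theta\le-d\cos\theta$ for every $\lambda\in\sigma(N)$. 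Consequently $\|T(t)\|\le\|e^{tN}\|=\esssup_{w}e^{t\re\nu(w)}\le e^{-d\cos\theta\,t}$, so $(T(t))_{t\ge0}$ is exponentially stable; exponential stability together with the $\infty$-admissibility of $B$ gives ISS of \eqref{InfiniteDim2} by \cite[Proposition 2.10]{JNP18} (cf. Example~\ref{exam2b}).

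The main obstacle is the transfer step in (i): one must make sure that the analytic contraction semigroup of $N$ genuinely restricts to the semigroup generated by $A$ on the invariant subspace $X$. The cleanest route is the resolvent-agreement argument above, which reduces invariance of $X$ under $e^{tN}$ to the trivial invariance under the closed integral of an $X$-valued resolvent path, rather than attempting to prove invariance under $e^{tN}$ directly from invariance under $N$ — a step that is delicate precisely because $\sigma(N)$ may be unbounded and $N$ unbounded.
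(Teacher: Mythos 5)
Your proof is correct, and its skeleton matches the paper's: reduce everything to the minimal normal extension $N$ given by Lemma~\ref{lem:Domains_of_subnormal_operators}, transfer the bounded analytic contraction semigroup from $N$ to $A$ by restriction, obtain $\infty$-admissibility from \cite[Theorem 1]{JSZ17} (via Remark~\ref{rem:Infty-admissibility}, the same source the paper cites), and deduce ISS in (iii) from exponential stability plus admissibility as in Example~\ref{exam2b}. The differences lie in two sub-steps, and in both your version is more self-contained. For (i), the paper cites \cite[Corollary II.4.7]{EnN00} for the generation result for $N$ and then simply appeals to the observation that $A$ is the restriction of $N$ to an invariant subspace; you instead prove the point this observation glosses over, namely that invariance of $X$ under $N$ does not by itself yield invariance under $e^{tN}$. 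Your resolvent-agreement identity $(\lambda I-A)^{-1}=(\lambda I-N)^{-1}\big|_X$ for $\lambda\notin S_\theta$ (which uses the hypothesis $\sigma(A)\subset S_\theta$ to know $\lambda\in\rho(A)$), fed into the Dunford integral, is precisely the clean way to get $T(t)=e^{tN}\big|_X$ and hence contractivity of $T(t)$ --- note only that the claim $y:=(\lambda I-N)^{-1}x\in X$ is best argued by starting from $z:=(\lambda I-A)^{-1}x\in D(A)$, observing $(\lambda I-N)z=(\lambda I-A)z=x$ since $N$ extends $A$, and invoking injectivity of $\lambda I-N$ to conclude $y=z$; your phrasing compresses this slightly. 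For (iii), the paper invokes the equality of spectral bound and growth bound for analytic semigroups \cite[Corollary IV.3.12]{EnN00}; you instead read the decay directly off the multiplication-operator representation, $\|T(t)\|_{L(X)}\le\esssup_w e^{t\re\nu(w)}\le e^{-d\cos\theta\, t}$ with $d=\operatorname{dist}(0,\sigma(A))>0$. This is more elementary and yields an explicit decay rate, at the price of relying on the identification $T(t)=e^{tN}\big|_X$ established in (i). Both routes are sound.
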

\begin{proof}
Assertion (i) follows from Lemma \ref{lem:Domains_of_subnormal_operators},
the fact that normal operators $N$ with $\sigma(N)\subset S_\theta$
generate  bounded analytic $C_0$-semigroups, see \cite[Corollary
II.4.7]{EnN00}, and from the observation that $A$ is the restriction of $N$ to an invariant subspace. 
The assertion (ii) has been proved in \cite{JSZ17}. 
Finally, $A$ generates an exponentially stable semigroup since for analytic semigroups the spectral bound equals the growth bound \cite[Corollary IV.3.12]{EnN00}, and ISS follows as $B$ is assumed to be $\infty$-admissible, see Example~\ref{exam2b}.
\end{proof}

We have the following important inequality for the subnormal operators:
\begin{proposition}\label{propsub}
Let $A$ be a subnormal operator on a Hilbert space $X$ satisfying $\sigma(A)\subset S_\theta$, for some
$\theta \in [0,\pi/2)$. Then  for  $\delta\ge 1-2\cos^2 \theta$ we have 
\begin{equation}\label{eq:key}
\re\, \langle x, A^2x\rangle_X + \delta \|Ax\|_X^2\ge 0, \qquad x\in D(A^2).
\end{equation}
\end{proposition}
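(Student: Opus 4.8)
The plan is to pass to a normal extension of $A$ and thereby turn the claimed operator inequality into an elementary pointwise inequality for scalars lying in the sector $S_\theta$. First I would invoke Lemma~\ref{lem:Domains_of_subnormal_operators} to fix a minimal normal extension $N$ of $A$ on some Hilbert space $Z \supseteq X$ with $\sigma(N) \subset \sigma(A) \subset S_\theta$. By the spectral theorem for normal operators (the multiplication representation recalled at the start of this section), $N$ is unitarily equivalent to $M_\nu$ on some $L^2(\mu) = L^2(W,\Omega,\mu)$, and the essential range of $\nu$ equals $\sigma(N)$; hence $\nu(w) \in S_\theta$ for $\mu$-almost every $w \in W$. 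Under this identification each $x \in X$ is viewed as an element of $L^2(\mu)$, and $\scalp{\cdot}{\cdot}_X$ is the restriction of the inner product of $Z = L^2(\mu)$.

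The second step is the reduction to scalars. Since $X$ is invariant under $N$, for every $x \in D(A^2)$ one has $Ax = Nx = \nu x$ and $A^2 x = N^2 x = \nu^2 x$, with both vectors again in $X$. Consequently
\[
\re\,\scalp{x}{A^2 x}_X + \delta \|Ax\|_X^2
= \int_W \big( \re(\nu^2) + \delta |\nu|^2 \big)\, |x|^2 \, d\mu ,
\]
because $\re\,\scalp{x}{\nu^2 x}_X = \int_W \re(\nu^2)\,|x|^2\,d\mu$ and $\|\nu x\|_{L^2(\mu)}^2 = \int_W |\nu|^2 |x|^2\,d\mu$. It therefore suffices to establish the pointwise bound $\re(\nu^2) + \delta |\nu|^2 \ge 0$ for every $\nu \in S_\theta$.

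For the final step I would write $\nu = \rho e^{i\psi}$ with $\rho \ge 0$; membership $\nu \in S_\theta$ means $|\arg(-\nu)| \le \theta$, i.e. $\psi = \pi + \beta$ with $|\beta| \le \theta$. Then $\re(\nu^2) = \rho^2 \cos 2\psi = \rho^2 \cos 2\beta$ and $|\nu|^2 = \rho^2$, so
\[
\re(\nu^2) + \delta |\nu|^2 = \rho^2\big( \cos 2\beta + \delta \big).
\]
Since $0 \le |\beta| \le \theta < \pi/2$ and the cosine is decreasing on $[0,\pi]$, we have $\cos 2\beta \ge \cos 2\theta = 2\cos^2\theta - 1$; hence $\cos 2\beta + \delta \ge 0$ as soon as $\delta \ge 1 - 2\cos^2\theta$, which gives the pointwise inequality and completes the argument.

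I expect the main obstacle to be the domain and identification bookkeeping rather than the trigonometry: one must argue carefully that the \emph{minimal} normal extension supplies the spectral inclusion $\sigma(N) \subset S_\theta$, so that $\nu$ genuinely takes values in the closed sector $\mu$-almost everywhere, and that for $x \in D(A^2)$ the invariance of $X$ legitimately yields $A^2 x = N^2 x = \nu^2 x$ \emph{inside} $X$, so that the inner products computed in $X$ coincide with the $L^2(\mu)$-integrals displayed above. Once this identification is secured, the reduction to the scalar inequality and its verification are routine.
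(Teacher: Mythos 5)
Your proof is correct and follows essentially the same route as the paper's: both pass to the multiplication-operator representation $\nu$ of the (minimal) normal extension, use $\sigma(N)\subset\sigma(A)\subset S_\theta$ so that the essential range of $\nu$ lies in the sector, and reduce \eqref{eq:key} to the pointwise inequality $\re(\nu^2)+\delta|\nu|^2\ge 0$, which your bound $\cos 2\beta\ge\cos 2\theta=2\cos^2\theta-1$ verifies exactly as the paper's algebraic identity $\re\nu^2=2(\re\nu)^2-|\nu|^2$ together with $(\re\nu)^2\ge\cos^2\theta\,|\nu|^2$ does. The only cosmetic difference is that you exploit invariance to write $A^2x=\nu^2x$ directly, while the paper routes the computation through $A^*x=P(\overline{\nu}x)$ and then observes that the projection drops out; these are interchangeable pieces of bookkeeping.
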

\begin{proof}
Expanding \eqref{eq:key} we obtain the equivalent assertion
\begin{equation}\label{eq:key2}
 \re\, \langle \nu x, P \overline\nu x \rangle_{L^2(\mu)} + \delta \|\nu x\|_{L^2(\mu)}^2 \ge 0,
\end{equation}
and we note that $\langle \nu x, P \overline\nu x \rangle_{L^2(\mu)} = \langle \nu x,   \overline\nu x \rangle_{L^2(\mu)}= \langle \nu^2 x,x\rangle_{L^2(\mu)}$.
The left hand side of \eqref{eq:key2} is
\[
\langle (  \re\, \nu^2 + \delta |\nu|^2) x, x\rangle_{L^2(\mu)}
=
\langle (2(  \re\,  \nu)^2 + (\delta-1) |\nu|^2 ) x,x\rangle_{L^2(\mu)}.
\]
As the essential range of $\nu$ lies in $\sigma(A)$,  we have 
by sectoriality 
\[
2 ( \re\, \nu)^2 \ge 2 \cos^2\theta |\nu|^2
\]
and hence 
\[
\langle (2(  \re\,  \nu)^2 + (\delta-1) |\nu|^2 ) x,x\rangle_{L^2(\mu)} \ge 0,
\]
for $\delta\ge 1-2\cos^2\theta$.
\end{proof}

Now we can derive a converse ISS Lyapunov theorem for a broad class of systems with subnormal generators:

\begin{corollary}\label{corcon}
\label{cor:Converse_Theorem_Subnormal_Generators}
 Let $A$
be  a subnormal operator on a Hilbert space $X$ satisfying $\sigma(A)\subset S_\theta\backslash\{0\}$, for some
$\theta \in [0,\pi/2)$. Further, let $B\in L(\mathbb C^m,X_{-1})$ and let $\Uc:=L^\infty(\R_+,\C^m)$. 
Then
\begin{equation}\label{Lyap2}
 V(x) := - \re\, \langle A^{-1}x,x\rangle_X
 \end{equation}
is an ISS Lyapunov function for \eqref{InfiniteDim2} satisfying
\[  \dot{V}_u(x) \le - c_1 \|x_0\|_X^2 + c_2 \|u\|^2_\infty \]
for some constants $c_1, c_2>0 $ and all $x_0\in X$ and $u\in \Uc$.
\end{corollary}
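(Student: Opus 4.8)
The plan is to verify the three hypotheses (a)--(c) of Proposition~\ref{prop:Conv_ISS_LF_Theorem_LinOp} and then invoke it directly; exponential stability and admissibility are already packaged for us in the surrounding results. First I would observe that $\sigma(A)\subset S_\theta\setminus\{0\}$ forces $0\notin\sigma(A)$, so Proposition~\ref{prop2} applies and yields at once that $A$ generates an exponentially stable $C_0$-semigroup and that every $B\in L(\C^m,X_{-1})$ is $\infty$-admissible. This secures the standing hypothesis of Proposition~\ref{prop:Conv_ISS_LF_Theorem_LinOp} and shows that its admissibility restriction is vacuous in our setting, so the conclusion will indeed hold for \emph{every} $B\in L(\C^m,X_{-1})$. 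Hypothesis (a), that is $D(A)\subset D(A^*)$, is exactly the first assertion of Lemma~\ref{lem:Domains_of_subnormal_operators}.

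For hypothesis (c) I would pass to the multiplication representation of the minimal normal extension, writing $Ax=\nu x$ on the invariant subspace $X\subset L^2(\mu)$, with the essential range of $\nu$ contained in $\sigma(N)\subset\sigma(A)\subset S_\theta\setminus\{0\}$. Sectoriality then gives $\re\,\nu\le-\cos\theta\,|\nu|<0$ almost everywhere, so that for $x\in D(A)\setminus\{0\}$
\[
\re\,\scalp{Ax}{x}_X=\int_W \re\,\nu(w)\,|x(w)|^2\,d\mu(w)<0,
\]
which is precisely (c).

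The hard part will be hypothesis (b), which is phrased in terms of $A^*A^{-1}$, whereas the available inequality (Proposition~\ref{propsub}) is phrased in terms of $A^2$. Here I would take $\delta=1-2\cos^2\theta$ in Proposition~\ref{propsub}, so that $\re\,\scalp{x}{A^2x}_X+\delta\|Ax\|_X^2\ge 0$ for all $x\in D(A^2)$, and substitute $x=A^{-1}y$ with $y\in D(A)$ (noting $A^{-1}y\in D(A^2)$, and $A^2A^{-1}y=Ay$, $AA^{-1}y=y$). This yields $\re\,\scalp{A^{-1}y}{Ay}_X+\delta\|y\|_X^2\ge 0$. Since $A^{-1}y\in D(A)\subset D(A^*)$, the adjoint identity $\scalp{A^{-1}y}{Ay}_X=\scalp{A^*A^{-1}y}{y}_X$ holds for $y\in D(A)$, giving $\re\,\scalp{A^*A^{-1}y}{y}_X+\delta\|y\|_X^2\ge 0$ on the dense subspace $D(A)$. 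Because $\im(A^{-1})=D(A)\subset D(A^*)$, the closed graph argument used in the proof of Theorem~\ref{thm:Gen_ISS_LF_Construction} shows $A^*A^{-1}\in L(X)$, so both sides are continuous in $y$ and the inequality extends by density to all $y\in X$. Finally, as $\theta\in[0,\pi/2)$ forces $1-2\cos^2\theta<1$, one may (enlarging $\delta$ if it is nonpositive, which only weakens the inequality) choose $\delta$ in $(0,1)$, which is exactly the range required in hypothesis (b).

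With (a)--(c) established, Proposition~\ref{prop:Conv_ISS_LF_Theorem_LinOp} shows that $V(x)=-\re\,\scalp{A^{-1}x}{x}_X$ is an ISS Lyapunov function for \eqref{InfiniteDim2} for every $\infty$-admissible $B$, hence for every $B\in L(\C^m,X_{-1})$ by Proposition~\ref{prop2}. To obtain the dissipation estimate in the stated form $\dot V_u(x)\le -c_1\|x_0\|_X^2+c_2\|u\|_\infty^2$, I would track the scaling through the proof: Proposition~\ref{prop:Conv_ISS_LF_Theorem_LinOp} applies Theorem~\ref{thm:Gen_ISS_LF_Construction} to $\tilde P=\tfrac{1}{1-\delta}P$, whose estimate \eqref{ex:Dissipative_Inequality_linear_system} holds for each $\eps>0$; fixing any $\eps\in(0,1)$ and multiplying back by $1-\delta$ gives the claim with $c_1=(1-\delta)(1-\eps)>0$ and $c_2=(1-\delta)c(\eps)>0$.
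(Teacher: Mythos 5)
Your proposal is correct and follows essentially the same route as the paper: exponential stability and $\infty$-admissibility from Proposition~\ref{prop2}, hypothesis (a) from Lemma~\ref{lem:Domains_of_subnormal_operators}, hypothesis (b) from Proposition~\ref{propsub} via exactly the same substitution $x=A^{-1}y$, and then Proposition~\ref{prop:Conv_ISS_LF_Theorem_LinOp}. The one local divergence is hypothesis (c): the paper deduces strict negativity of $\re\,\scalp{Ax}{x}_X$ from the Lumer--Phillips theorem (dissipativity of the contraction generator) combined with $\sigma(A)\subset S_\theta\setminus\{0\}$, whereas you argue directly in the multiplication-operator model, using sectoriality of the essential range of $\nu$ to get $\re\,\nu\le -\cos\theta\,|\nu|<0$ a.e.; your version is more self-contained and makes the strictness transparent, while the paper's is shorter but terser. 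You are also more careful than the paper on two points it glosses over: the density-plus-boundedness extension of \eqref{eqn:a2} from $D(A)$ to all of $X$ (using $A^*A^{-1}\in L(X)$ via the closed graph theorem), and the explicit tracking of the scaling $\tilde P=\tfrac{1}{1-\delta}P$ through \eqref{ex:Dissipative_Inequality_linear_system} to produce the constants $c_1,c_2$ in the stated quadratic dissipation inequality, including the observation that $1-2\cos^2\theta<1$ so that $\delta$ can be taken in $(0,1)$.
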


\begin{proof}
By Proposition~\ref{prop2}, $A$ generates an exponentially stable and analytic $C_0$-semigroup  of contractions $(T(t))_{t\ge 0}$ and $B$  is $\infty$-admissible for  $(T(t))_{t\ge 0}$.
Further, Lemma~\ref{lem:Domains_of_subnormal_operators} guarantees that $D(A) \subset D(A^*)$.
As $A$ generates a contraction semigroup, the Lumer-Phillips theorem ensures that the operator $A$ is dissipative 
(that is, $\re\langle Ax,x\rangle_X \le 0$ for $x\in D(A)$). This together with $\sigma(A)\subset S_\theta\backslash\{0\}$  implies 
$\re\langle Ax,x\rangle_X < 0$ for $x\in D(A)\backslash\{0\}$.

%

Furthermore, as $0 \in\rho(A)$, for all $y \in D(A)$ there is $x \in D(A^2)$ so that $y = Ax$ and applying 
Proposition~\ref{propsub}, we obtain
\begin{eqnarray*}
0 \leq \re\, \langle x, A^2x\rangle_X + \delta \|Ax\|_X^2 = \re\, \langle A^*A^{-1}y, y\rangle_X + \delta \|y\|_X^2.
\end{eqnarray*}
This shows \eqref{eqn:a2}.

Hence all assumptions of Proposition~\ref{prop:Conv_ISS_LF_Theorem_LinOp} are satisfied, and application of 
Proposition~\ref{prop:Conv_ISS_LF_Theorem_LinOp} shows the claim.
\end{proof}

\begin{remark}
The above corollary also holds if we replace $B\in L(\mathbb C^m,X_{-1})$ by an $\infty$-admissible $B\in L(U,X_{-1})$, where $U$ is a Hilbert space.
\end{remark}

\subsection{ISS Lyapunov functions for input-to-state stable diagonal systems}
\label{sec:Example}

Consider a linear system \eqref{InfiniteDim2} with the state space
\[
X=l_2(\N):=\Big\{ x=\{x_k\}_{k=1}^{\infty}: \|x\|_{X}=\Big(\sum_{k=1}^{\infty} |x_k|^2 \Big)^{1/2}< \infty  \Big\}.
\] 
endowed in the usual way with the scalar product $\lel  \cdot,\cdot \rir_{l_2}$.
Let $U:=\R$.

Consider an operator $A:X\to X$, defined by $Ae_k=- \lambda_k e_k$, where $e_k$ is the $k$-th unit
vector of $l_2(\N)$ and $\lambda_k \in \R$ with $\lambda_k<\lambda_{k+1}$ for all $k$, $\lambda_1>\varepsilon>0$ and $\lambda_k \to \infty$  as $k \to \infty$. 
The operator $A$ can be represented using the spectral decomposition
\begin{eqnarray}\label{a1}
Ax:=\sum_{k=1}^\infty - \lambda_k \lel  x,e_k \rir_{l_2} e_k, \quad x \in D(A),
\label{eq:Generator_diagonal_semigroups}
\end{eqnarray}
with
\begin{eqnarray}\label{a2}
D(A) = \{x \in l_2(\N): \sum_{k=1}^\infty - \lambda_k \lel  x,e_k \rir_{l_2} e_k \text{ converges}\}.
\label{eq:Domain_generator_diagonal_semigroups}
\end{eqnarray}

We have the following result:
\begin{proposition}
\label{prop:Converse_Lyapunov_Theorem_Diagonal_System} 
Let $A$ be given by \eqref{a1}-\eqref{a2} and  $B \in L(\mathbb C^m,X_{-1})$. Then \eqref{InfiniteDim2} is  ISS and 
a non-coercive ISS Lyapunov function for \eqref{InfiniteDim2} can be chosen as
\begin{eqnarray}
V(x):=\sum_{k=1}^\infty \frac{1}{\lambda_k} \lel  x,e_k \rir_{l_2}^2.
\label{eq:ISS_LF_diagonal_semigroups}
\end{eqnarray}
\end{proposition}

\begin{proof}
By assumptions, the operator  $A$ is self-adjoint with  $\sigma(A) \subset (-\infty,0)$. Thus, the assumptions of Corollary~\ref{corcon} are satisfied. 
Moreover, the inverse of $A$ is given by
\begin{eqnarray}
A^{-1}x:=\sum_{k=1}^\infty - \frac{1}{\lambda_k} \lel  x,e_k \rir_{l_2} e_k,
\label{eq:Inverse_Generator_diagonal_semigroups}
\end{eqnarray}
and thus the Lyapunov function \eqref{Lyap2} has the form \eqref{eq:ISS_LF_diagonal_semigroups}.
\end{proof}


%
%
It is easy to see that $P$ (as well as the corresponding ISS Lyapunov function $V$) is not coercive since $\lambda_k \to \infty$ as $k \to \infty$. 

\subsection{ISS Lyapunov functions for a heat equation with Dirichlet boundary input}

It is well-known that a classical heat equation with Dirichlet boundary inputs is ISS, which has been verified by means of several different methods: \cite{JNP18, KaK16b, MKK19}. However, no constructions for ISS Lyapunov functions have been proposed. 
In the next example, we show that using Theorem~\ref{thm:Gen_ISS_LF_Construction},
 one can construct a non-coercive ISS Lyapunov function for this system.
\begin{example}
\label{ex1}
Let us consider the following boundary control system given by the one-dimensional heat equation on the spatial domain $[0,1]$ with Dirichlet boundary control at the point $1$,
\begin{align*}
x_t(\xi,t)={}&a x_{\xi\xi}(\xi,t), \quad \xi\in(0,1),~ t>0,\\
  x(0,t)={}&0,  \quad x(1,t)=u(t), \quad t>0,\\
x(\xi,0)={}&x_{0}(\xi),
\end{align*}
where $a>0$. We refer the reader to \cite[Chapter 10]{TuW09} for the definition and properties of boundary control systems.
We choose
 $X:=L^{2}(0,1)$, $U:=\mathbb C$ and  $\Uc:=L^\infty(\R_+,\C)$.
 Every boundary control system can be equivalently written in the form
 \begin{align*}
 \dot{x}(t)=Ax(t)+Bu(t),
 \end{align*}
 where $A$ generates a $C_0$-semigroup on $X$ and $B\in L(U, X_{-1})$, see \cite[Proposition 10.1.2 and Remark 10.1.4]{TuW09}.
 For  the one-dimensional heat equation on the spatial domain $[0,1]$ with Dirichlet boundary conditions the operator $A$ is given by
\begin{align*}
Af={}af'', \quad f\in D(A):={}\left\{f\in H^{2}(0,1):   
f(0)= f(1)=0\right\}.
\end{align*}
Here $H^2(0,1)$ denotes the Sobolev space of functions $f \in L^2(0,1)$, which have weak derivatives of order $\leq 2$, all of which belong to $L^2(0,1)$.
It is well-known that $A$ is a self-adjoint operator on $X$ generating an exponentially stable analytic $C_0$-semigroup on $X$. 
By \cite[Theorem 1 and Proposition 5]{JSZ17}, we get that $B\in L(U, X_{-1})$ is $\infty$-admissible, for every $x_0\in X$ and $u\in L^\infty(0,\infty)$ the corresponding mild solution is continuous with respect to time, and $\kappa(0)=0$.
In \cite{JNP18}, the following ISS-estimates have been shown for every $x_0\in X$, $u\in \Uc$, $p> 2$ and some $c=c(p)>0$. 
\begin{align*}
\|x(t)\|_{L^2(0,1)}&\le {\rm{e}}^{-a\pi^{2} t}\|x_0\|_{L^2(0,1)} +\frac{1}{\sqrt{3}} \|u\|_{L^\infty(0,t)},\\
\|x(t)\|_{L^2(0,1)}&\le {\rm{e}}^{-a\pi^{2} t}\|x_0\|_{L^2(0,1)} +c \left( \int_0^t |u(s)|^pds \right)^{1/p},
\end{align*}
Direct application of Corollary~\ref{cor:Converse_Theorem_Subnormal_Generators}
shows that 
\begin{eqnarray*}
 V(x) &=& -  \langle A^{-1}x,x\rangle_X = \int_0^1 \left(\int_\xi^1  (\xi -\tau) x(\tau) d\tau \right)\overline{x(\xi)} d\xi 
 \end{eqnarray*}
is a non-coercive ISS Lyapunov function for the  one-dimensional heat equation on the spatial domain $[0,1]$ with Dirichlet boundary control at the point $1$.
In turn, the constructed non-coercive ISS Lyapunov function implies ISS of the considered system.
\end{example}

\section{Conclusions}

In this paper, we have investigated the question to what extent the
existence of a non-coercive ISS Lyapunov function implies that a forward complete system is input-to-state stable (ISS). 
It was shown that norm-to-integral ISS follows from the
existence of such Lyapunov functions for a large
class of systems. Furthermore, we show that norm-to-integral ISS is equivalent to ISS for the systems possessing the continuity of the flow map near the equilibrium and boundedness of finite-time reachability sets.
 These assumptions are related to questions of the richness of the possible
dynamics both close to the origin and in the large. 

Non-coercive Lyapunov functions are to some extent natural in infinite dimensions. Already Datko's construction of quadratic
Lyapunov functions $V(x)=\scalp{Px}{x}$ for exponentially stable linear systems on Hilbert space generally leads to non-coercive Lyapunov functions. 
In this work, we show that under some additional conditions, which relate the infinitesimal generator of a semigroup and an operator $P$, this function $V$ is a non-coercive ISS Lyapunov function for a linear system with any $\infty$-admissible input operator.  
Furthermore, we have shown in this paper that for broad classes of linear systems with unbounded input operators (including analytic systems with subnormal generators) the construction of Lyapunov functions using the resolvent at $0$ as an operator $P$ is a
natural choice and one that leads to noncoercive Lyapunov functions. 
As an example, we have constructed an ISS Lyapunov function for a heat equation with a Dirichlet boundary input, which seems to be the first construction of an ISS Lyapunov function for this system, which was widely studied by non-Lyapunov methods.

In a future work, we plan to extend the class of systems for which explicit
constructions are possible and to deepen our understanding of noncoercive
ISS Lyapunov functions.

\vspace{-5mm}

\bibliographystyle{abbrv}
\bibliography{Mir_LitList}

\begin{thebibliography}{10}

\bibitem{ArK01}
M.~Arcak and P.~Kokotovi{\'c}.
\newblock Nonlinear observers: a circle criterion design and robustness
  analysis.
\newblock {\em Automatica}, 37(12):1923--1930, 2001.

\bibitem{CaH98}
T.~Cazenave and A.~Haraux.
\newblock {\em An Introduction to Semilinear Evolution Equations}.
\newblock Oxford University Press, New York, 1998.

\bibitem{Con90}
J.~B. Conway.
\newblock {\em A Course in Functional Analysis}.
\newblock Springer-Verlag, New York, 1990.

\bibitem{Con91}
J.~B. Conway.
\newblock {\em The theory of subnormal operators}.
\newblock Number~36. American Mathematical Soc., 1991.

\bibitem{CuZ95}
R.~F. Curtain and H.~Zwart.
\newblock {\em An Introduction to Infinite-Dimensional Linear Systems Theory}.
\newblock Springer-Verlag, New York, 1995.

\bibitem{DaM13}
S.~Dashkovskiy and A.~Mironchenko.
\newblock {Input-to-state stability of infinite-dimensional control systems}.
\newblock {\em Mathematics of Control, Signals, and Systems}, 25(1):1--35,
  2013.

\bibitem{DRW10}
S.~Dashkovskiy, B.~R\"{u}ffer, and F.~Wirth.
\newblock {Small gain theorems for large scale systems and construction of ISS
  Lyapunov functions}.
\newblock {\em SIAM Journal on Control and Optimization}, 48(6):4089--4118,
  2010.

\bibitem{EnN00}
K.-J. Engel and R.~Nagel.
\newblock {\em One-Parameter Semigroups for Linear Evolution Equations}, volume
  194 of {\em Graduate Texts in Mathematics}.
\newblock Springer-Verlag, New York, 2000.
\newblock With contributions by S. Brendle, M. Campiti, T. Hahn, G. Metafune,
  G. Nickel, D. Pallara, C. Perazzoli, A. Rhandi, S. Romanelli and R.
  Schnaubelt.

\bibitem{FrK08}
R.~A. Freeman and P.~V. Kokotovic.
\newblock {\em {Robust Nonlinear Control Design: State-Space and Lyapunov
  Techniques}}.
\newblock Birkh\"{a}user, Boston, MA, 2008.

\bibitem{GSW99}
L.~Gr\"{u}ne, E.~Sontag, and F.~Wirth.
\newblock {Asymptotic stability equals exponential stability, and ISS equals
  finite energy gain --- if you twist your eyes}.
\newblock {\em Systems \& Control Letters}, 38(2):127--134, 1999.

\bibitem{JNP18}
B.~Jacob, R.~Nabiullin, J.~R. Partington, and F.~L. Schwenninger.
\newblock Infinite-dimensional input-to-state stability and {O}rlicz spaces.
\newblock {\em SIAM Journal on Control and Optimization}, 56(2):868--889, 2018.

\bibitem{JPP17}
B.~Jacob, J.~R. Partington, S.~Pott, and A.~Wynn.
\newblock $\beta$-admissibility of observation operators for hypercontractive
  semigroups.
\newblock {\em Journal of Evolution Equations}, 18(1):153--170, 2018.

\bibitem{JSZ17}
B.~Jacob, F.~L. Schwenninger, and H.~Zwart.
\newblock On continuity of solutions for parabolic control systems and
  input-to-state stability.
\newblock {\em Journal of Differential Equations}, 266:6284--6306, 2019.

\bibitem{JaZ12}
B.~Jacob and H.~J. Zwart.
\newblock {\em {Linear Port-Hamiltonian Systems on Infinite-Dimensional
  Spaces}}.
\newblock Springer, Basel, 2012.

\bibitem{JLR08}
B.~Jayawardhana, H.~Logemann, and E.~P. Ryan.
\newblock Infinite-dimensional feedback systems: the circle criterion and
  input-to-state stability.
\newblock {\em Communications in Information and Systems}, 8(4):413--444, 2008.

\bibitem{JLR09}
B.~Jayawardhana, H.~Logemann, and E.~P. Ryan.
\newblock Input-to-state stability of differential inclusions with applications
  to hysteretic and quantized feedback systems.
\newblock {\em SIAM Journal on Control and Optimization}, 48(2):1031--1054,
  2009.

\bibitem{JTP94}
Z.-P. Jiang, A.~R. Teel, and L.~Praly.
\newblock {Small-gain theorem for ISS systems and applications}.
\newblock {\em Mathematics of Control, Signals, and Systems}, 7(2):95--120,
  1994.

\bibitem{KaJ11b}
I.~Karafyllis and Z.-P. Jiang.
\newblock {\em Stability and Stabilization of Nonlinear Systems}.
\newblock Springer-Verlag, London, 2011.

\bibitem{KaK16b}
I.~Karafyllis and M.~Krstic.
\newblock {ISS with respect to boundary disturbances for 1-D parabolic PDEs}.
\newblock {\em IEEE Transactions on Automatic Control}, 61(12):3712--3724,
  2016.

\bibitem{KaK18}
I.~Karafyllis and M.~Krstic.
\newblock Small-gain stability analysis of certain hyperbolic--parabolic {PDE}
  loops.
\newblock {\em Systems \& Control Letters}, 118:52--61, 2018.

\bibitem{KaK19}
I.~Karafyllis and M.~Krstic.
\newblock {\em Input-to-State Stability for PDEs}.
\newblock Springer, 2019.

\bibitem{KeD16}
C.~M. Kellett and P.~M. Dower.
\newblock Input-to-state stability, integral input-to-state stability, and
  ${L}_2$-gain properties: Qualitative equivalences and interconnected systems.
\newblock {\em IEEE Transactions on Automatic Control}, 61(1):3--17, 2016.

\bibitem{LSW96}
Y.~Lin, E.~D. Sontag, and Y.~Wang.
\newblock A smooth converse {L}yapunov theorem for robust stability.
\newblock {\em SIAM Journal on Control and Optimization}, 34(1):124--160, 1996.

\bibitem{Lit89}
W.~Littman.
\newblock A generalization of a theorem of {D}atko and {P}azy.
\newblock In {\em Advances in Computing and Control}, volume 130 of {\em
  Lecture Notes in Control and Information Sciences}, pages 318--323. Springer,
  1989.

\bibitem{Mir19a}
A.~Mironchenko.
\newblock Criteria for input-to-state practical stability.
\newblock {\em IEEE Transactions on Automatic Control}, 64(1):298--304, 2019.

\bibitem{Mir20}
A.~Mironchenko.
\newblock Lyapunov functions for input-to-state stability of
  infinite-dimensional systems with integrable inputs.
\newblock In {\em IFAC-PapersOnLine}, volume~53, pages 5336--5341, 2020.

\bibitem{MiI15b}
A.~Mironchenko and H.~Ito.
\newblock Construction of {L}yapunov functions for interconnected parabolic
  systems: An i{ISS} approach.
\newblock {\em SIAM Journal on Control and Optimization}, 53(6):3364--3382,
  2015.

\bibitem{MKK19}
A.~Mironchenko, I.~Karafyllis, and M.~Krstic.
\newblock Monotonicity methods for input-to-state stability of nonlinear
  parabolic {PDE}s with boundary disturbances.
\newblock {\em SIAM Journal on Control and Optimization}, 57(1):510--532, 2019.

\bibitem{MiP19}
A.~Mironchenko and C.~Prieur.
\newblock Input-to-state stability of infinite-dimensional systems: Recent
  results and open questions.
\newblock {\em SIAM Review}, 62(3):529--614, 2020.

\bibitem{MiW18b}
A.~Mironchenko and F.~Wirth.
\newblock Characterizations of input-to-state stability for
  infinite-dimensional systems.
\newblock {\em IEEE Transactions on Automatic Control}, 63(6):1602--1617, 2018.

\bibitem{MiW17c}
A.~Mironchenko and F.~Wirth.
\newblock Lyapunov characterization of input-to-state stability for semilinear
  control systems over {B}anach spaces.
\newblock {\em Systems \& Control Letters}, 119:64--70, 2018.

\bibitem{MiW19b}
A.~Mironchenko and F.~Wirth.
\newblock Existence of non-coercive {L}yapunov functions is equivalent to
  integral uniform global asymptotic stability.
\newblock {\em Mathematics of Control, Signals, and Systems}, 31(4), 2019.

\bibitem{MiW19a}
A.~Mironchenko and F.~Wirth.
\newblock {Non-coercive Lyapunov functions for infinite-dimensional systems}.
\newblock {\em Journal of Differential Equations}, 105:7038--7072, 2019.

\bibitem{NKK15}
N.~Noroozi, A.~Khayatian, and H.~R. Karimi.
\newblock Semiglobal practical integral input-to-state stability for a family
  of parameterized discrete-time interconnected systems with application to
  sampled-data control systems.
\newblock {\em Nonlinear Analysis: Hybrid Systems}, 17:10--24, 2015.

\bibitem{PaBh89}
A.~Patel and S.~Bhatt.
\newblock On unbounded subnormal operators.
\newblock {\em Proc. Indian Acad. Sci. (Math. Sci.)}, 99(1):85--92, 1989.

\bibitem{Paz83}
A.~Pazy.
\newblock {\em {Semigroups of Linear Operators and Applications to Partial
  Differential Equations}}.
\newblock Springer-Verlag, New York, 1983.

\bibitem{PrM12}
C.~Prieur and F.~Mazenc.
\newblock {ISS-Lyapunov functions for time-varying hyperbolic systems of
  balance laws}.
\newblock {\em Mathematics of Control, Signals, and Systems}, 24(1-2):111--134,
  2012.

\bibitem{Sch18}
J.~Schmid.
\newblock Weak input-to-state stability: characterizations and counterexamples.
\newblock {\em Mathematics of Control, Signals, and Systems}, 31(4):433--454,
  2019.

\bibitem{Sch20}
F.~L. Schwenninger.
\newblock Input-to-state stability for parabolic boundary control: linear and
  semilinear systems.
\newblock In {\em Control Theory of Infinite-Dimensional Systems}, pages
  83--116. Springer, 2020.

\bibitem{ShL12}
Y.~Sharon and D.~Liberzon.
\newblock Input to state stabilizing controller for systems with coarse
  quantization.
\newblock {\em IEEE Transactions on Automatic Control}, 57(4):830--844, 2012.

\bibitem{Son89}
E.~D. Sontag.
\newblock Smooth stabilization implies coprime factorization.
\newblock {\em IEEE Transactions on Automatic Control}, 34(4):435--443, 1989.

\bibitem{Son98}
E.~D. Sontag.
\newblock {Comments on integral variants of ISS}.
\newblock {\em Systems \& Control Letters}, 34(1-2):93--100, 1998.

\bibitem{SoW95}
E.~D. Sontag and Y.~Wang.
\newblock {On characterizations of the input-to-state stability property}.
\newblock {\em Systems \& Control Letters}, 24(5):351--359, 1995.

\bibitem{Sza65}
J.~Szarski.
\newblock {\em Differential Inequalities}.
\newblock Polish Sci. Publ. PWN, Warszawa, Poland, 1965.

\bibitem{TPT18}
A.~Tanwani, C.~Prieur, and S.~Tarbouriech.
\newblock Stabilization of linear hyperbolic systems of balance laws with
  measurement errors.
\newblock In {\em Control subject to computational and communication
  constraints}, pages 357--374. Springer, 2018.

\bibitem{TuW09}
M.~Tucsnak and G.~Weiss.
\newblock {\em {Observation and Control for Operator Semigroups}}.
\newblock Basler Lehrb{\"{u}}cher. Birkh{\"{a}}user Basel, 2009.

\bibitem{Wei80}
J.~Weidmann.
\newblock {\em Linear Operators in {H}ilbert Spaces}.
\newblock Springer-Verlag, 1980.

\bibitem{Wei89b}
G.~Weiss.
\newblock Admissibility of unbounded control operators.
\newblock {\em SIAM Journal on Control and Optimization}, 27(3):527--545, 1989.

\bibitem{ZhZ19}
J.~Zheng and G.~Zhu.
\newblock A {D}e {G}iorgi iteration-based approach for the establishment of
  {ISS} properties for {B}urgers' equation with boundary and in-domain
  disturbances.
\newblock {\em IEEE Transactions on Automatic Control}, 64(8):3476--3483, 2018.

\bibitem{ZhZ18}
J.~Zheng and G.~Zhu.
\newblock Input-to-state stability with respect to boundary disturbances for a
  class of semi-linear parabolic equations.
\newblock {\em Automatica}, 97:271--277, 2018.

\end{thebibliography}

\end{document}
